\newcommand{\norm}[1]{\left\lVert#1\right\rVert}
\newcommand{\pref}[1]{(\ref{#1})}
\newcommand{\real}{\text{Re}}
\theoremstyle{definition}
\newtheorem{theorem}{Theorem}[section]
\newtheorem{claim}{Claim}[theorem]
\newtheorem{lemma}[theorem]{Lemma}
\newtheorem{proposition}[theorem]{Proposition}
\newtheorem{corollary}[theorem]{Corollary}
\newtheorem{definition}[theorem]{Definition}
\newtheorem{remark}[theorem]{Remark}
\title{Limits of Probability Measures with General Coefficients}
\author{Andrew Yao}
\address{Massachusetts Institute of Technology, Cambridge, Massachusetts}
\email{andrew.j.yao@gmail.com}
\begin{document}

\begin{abstract} 
We study the convergence of probability measures in terms of moments by applying operators to their Bessel generating functions. We consider a general setting of applying operators such as the Dunkl operator to formal power series that are symmetric or symmetric in all but one variable. Afterwards, we apply the results from this setting by considering Bessel generating functions as the formal power series to obtain a Law of Large Numbers as $N$, the number of variables, increases to infinity and $N^c\beta$ converges to a constant, where $c\in (-\infty, 1)$. In contrast with previous results, we consider when the scaled partial derivatives of the logarithms of the Bessel generating functions evaluated at the origin can have nonzero $N\rightarrow\infty$ limit when any number of variables is involved. Then, the free cumulant of order $k$ is a linear combination of the limits of the order $k$ partial derivatives. 
\end{abstract}

\maketitle

\section{Introduction}
\label{sec:intro}

The goal of this paper is to characterize the convergence of a sequence of probability measures in terms of moments using the coefficients of their Bessel generating functions. In order to reach this goal, we study the applications of sequences of operators to formal power series. In particular, we consider the Dunkl operator introduced in \cite{dunkloperators}, which we define in \Cref{def:operators}. Furthermore, we mainly focus on formal power series over $x_i$, $1\leq i\leq N$ that are symmetric in $N-1$ or $N$ of the $x_i$ for positive integers $N$. Partitions are useful for characterizing such formal power series, see \Cref{subsec:powerseries}.

\subsection{Setup}
\label{subsec:setup}

We first state the setting of the main result. The setting we study is based on the setting of the paper \cite{matrix}, and we state the definitions of LLN-satisfaction and exponentially decaying measures from the paper in \Cref{def:LLN,def:decay}, respectively, with some modifications; in particular, we prove results for a larger class of measures. The notion of LLN-satisfaction that we study is also discussed in \cite{llnclt} in the context of Jack generating functions.

Suppose $\theta\in\mathbb{C}$ and $\real(\theta)\geq 0$. If $\beta=2\theta$, $\beta=1$, $2$, and $4$ correspond to the GUE, GOE, and GSE, respectively. For positive integers $N$, let $\mathcal{M}_N$ denote the set of Borel probability measures over $\mathbb{C}^N$. Given $\theta$ and $\mu\in\mathcal{M}_N$, the Bessel generating function $G_\theta(x_1,\ldots,x_N;\mu)$ is defined in \Cref{def:BGF}.

Suppose $c$ is a real number with $c<1$. For a sequence $\{\mu_N\}_{N\geq 1}$ of probability measures such that $\mu_N\in\mathcal{M}_N$ for $N\geq 1$, we let the random variable $p_k^{N,c}$ be
\[
    p_k^{N,c} \triangleq \frac{1}{N}\sum_{i=1}^N \left(\frac{a_i}{N^{1-c}}\right)^k,
\]
where $(a_1,\ldots,a_N)\sim\mu_N$ for $N, k\geq 1$. The moments $\{m_k\}_{k\geq 1}$ of $\{\mu_N\}_{N\geq 1}$ are given by $m_k=\lim_{N\rightarrow\infty}\mathbb{E}[p_k^{N,c}]$ for $k\geq 1$.

\begin{definition}
\label{def:LLN}
A sequence $\{\mu_N\}_{N\geq 1}$ of probability measures such that $\mu_N\in\mathcal{M}_N$ for $N\geq 1$ \textit{satisfies a Law of Large Numbers} with constant $c$ and moments $\{m_k\}_{k\geq 1}$ if
\[
\lim_{N\rightarrow\infty} \mathbb{E}_{(a_1,\ldots,a_N)\sim\mu_N}\left(\prod_{i=1}^s p_{k_i}^{N,c}\right)  = \prod_{i=1}^s m_{k_i}
\]
for all positive integers $s$ and $k_i, 1\leq i\leq s$. For simplicity, we also equivalently state that the sequence $\{\mu_N\}_{N\geq 1}$ satisfies a $c$-LLN.
\end{definition}

For $v\in\mathbb{C}^N$, we let $|v|$ denotes its magnitude $\sqrt{\sum_{i=1}^N |v_i|^2}$. This notation is used in the following definition of the class of probability measures that we study.

\begin{definition}
\label{def:decay}
A probability measure $\mu$ in $\mathcal{M}_N$ is \textit{exponentially decaying} at rate $R>0$ if
\[\int_{\mathbb{C}^N} e^{R|(a_1,\ldots,a_N)|} \mu(da_1, \ldots, da_N)\]
is finite.
\end{definition}

\begin{remark}
\cite{dunklbound}*{Lemma 4.4} discusses a condition similar to that of the previous definition. The results of this paper are also true for compactly supported generalized functions, see \cite{rectangularmatrix}*{Theorem 2.27}.
\end{remark}

The following lemma showcases a key property of exponentially decaying probability measures. It can be proved using the method of \cite{matrix}*{Lemma 2.9} and \pref{eq:bessel}. Recall that we are assuming that $\real(\theta)\geq 0$. 

\begin{lemma}
\label{lemma:convergence}
Suppose $\mu\in\mathcal{M}_N$ is exponentially decaying at rate $R>0$. Then, $G_\theta(x_1, \ldots,\\ x_N; \mu)$ converges and is holomorphic in the closed ball of radius $R$ centered at the origin.
\end{lemma}

\subsection{Partitions}
\label{subsec:partitions}
Before stating the main result, we introduce some terminology and notation related to partitions. The set of all partitions is denoted by $P$. For $\lambda\in P$, $|\lambda|$ denotes the size of $\lambda$, and $P^+$ is defined as the set of $\lambda\in P$ such that $|\lambda|\geq 1$.

\begin{definition}
\label{def:orderedtuple} Let $N$ be a positive integer. For an ordered $N$-tuple $a=(a_1, a_2, \ldots, a_N)$ of nonnegative integers, let the $\textit{equivalent partition}$ of $a$, denoted by $\pi(a)$, be the partition $\lambda=(\lambda_1\geq\cdots\geq\lambda_m)$ such that the multisets $\{\lambda_1,\ldots,\lambda_m\}$ and $\{a_i|1\leq i\leq N, a_i>0\}$ are equal.
\end{definition}

Suppose $\nu_1, \nu_2, \ldots, \nu_k$ are partitions. Suppose $\nu_i=(a_{i,1}\geq\cdots\geq a_{i,m_i})$ for $1\leq i\leq k$ and let $p=(a_{1,1},\ldots,a_{1,m_1}, a_{2,1}, \ldots, a_{k, m_k})$.
Then, $\nu_1+\cdots+\nu_k$ denotes the partition $\pi(p)$. Moreover, for a partition $\nu=(a_1\geq a_2\geq\cdots\geq a_m)$, define $P(\nu)$ to be the number of distinct permutations of $(a_1, a_2, \ldots, a_m)$. Also, for a finite list $S$ of positive integers with maximum element $M$, suppose that $n_i$ of the elements of $S$ are $i$ for $1\leq i\leq M$. Then, let $\sigma(S)$ be $\pi((n_1, n_2, \ldots, n_M))$.

\subsection{Main result}

First, we state \Cref{lln}, which we use to prove \Cref{thm:equivalence}, the main result of this paper. Note that in \Cref{lln}, $NC(k)$ denotes the set of noncrossing partitions of $[k]$, see \Cref{subsec:importantcoeff} for the definition.

\begin{theorem}
\label{lln}
Suppose $\theta\in\mathbb{C}$ has nonnegative real part and $c$ is a real number such that $c<1$. Also, assume $\{\theta_N\}_{N\geq 1}$ is a sequence of complex numbers with nonnegative real part such that $\lim_{N\rightarrow\infty}N^c\theta_N=\theta$. Let $\{\mu_N\}_{N\geq 1}$ be a sequence of probability measures such that for all $N\geq 1$, $\mu_N$ is in $\mathcal{M}_N$ and is exponentially decaying. Assume that for all $\nu\in P^+$, a complex number $c_\nu$ exists such that
\begin{equation}
\label{eq:llnconditions}
\displaystyle\lim_{N\rightarrow\infty}\frac{1}{N^{1-c}}\cdot\frac{\partial}{\partial x_{i_1}}\cdots\frac{\partial}{\partial x_{i_r}} \ln(G_{\theta_N}(x_1,\ldots,x_N;\mu_N))\bigg|_{x_i=0,1\leq i\leq N} = \frac{|\nu|!c_\nu}{P(\nu)}
\end{equation}
for all positive integers $i_1, \ldots, i_r$ such that $\sigma((i_1,\ldots,i_r))=\nu$. Then, $\{\mu_N\}_{N\geq 1}$ satisfies a $c$-LLN and
\[
m_k = \sum_{\pi\in NC(k)} \prod_{B\in \pi}\theta^{|B|-1}\left(\sum_{\nu\in P, |\nu|=|B|}(-1)^{\ell(\nu)-1} \frac{|\nu|P(\nu)}{\ell(\nu)}c_\nu\right)
\]
for all positive integers $k$.
\end{theorem}

\begin{remark}
In \pref{eq:llnconditions}, $c_\nu$ is the $N\rightarrow\infty$ limit of the coefficient of $\prod_{j=1}^r x_{i_j}$ in $\ln(G_{\theta_N})$ scaled by $\frac{1}{N^{1-c}}$.
\end{remark}

In \Cref{lln} the free cumulant of order $k\geq 1$ is
\[
c_k = \theta^{k-1}\sum_{\nu\in P, |\nu|=k}(-1)^{\ell(\nu)-1} \frac{|\nu|P(\nu)}{\ell(\nu)} c_\nu,
\]
using the definition of free cumulants presented in \cite{freeprobability}. Observe that $c_k$ is a linear combination of the limits of the order $k$ partial derivatives. For the proof of \Cref{lln}, see \Cref{subsec:mainproof}.

In \Cref{sec:measures}, we see that we can view $\ln(G_\theta(x_1,\ldots,x_N;\mu))$ as a symmetric polynomial and therefore as a symmetric formal power series. Also, we often evaluate functions at $x_i=0$, $1\leq i\leq N$; for example, see \pref{eq:llnconditions}. This corresponds to the constant terms of formal power series, and \Cref{lln:generalization} is essential for this approach.

Using \Cref{lln} allows for the proof of the following generalization, see \Cref{sec:coeff}. The result requires an uniformity condition on the coefficients of the logarithms of the Bessel generating functions, similarly to what \cite{llnclt}*{Assumption 2.1} requires for Jack generating functions. 

\begin{theorem}
\label{thm:equivalence}
Suppose $\theta\in\mathbb{C}$ has nonnegative real part and $c$ is a real number such that $c<1$. Let $\{\theta_N\}_{N\geq 1}$ be a sequence of complex numbers with nonnegative real part such that $\lim_{N\rightarrow\infty}N^c\theta_N=\theta$. Let $\{\mu_N\}_{N\geq 1}$ be a sequence of probability measures such that for all $N\geq 1$, $\mu_N$ is in $\mathcal{M}_N$ and is exponentially decaying. For $N\geq 1$ and $\nu\in P^+$, define
\[
c_\nu^N\triangleq\frac{P(\nu)}{|\nu|!N^{1-c}}\cdot\frac{\partial}{\partial x_{i_1}}\cdots\frac{\partial}{\partial x_{i_r}} \ln(G_{\theta_N}(x_1,\ldots,x_N;\mu_N))\bigg|_{x_i=0,1\leq i\leq N}
\]
for any positive integers $i_1, \ldots, i_r \leq N$ such that $\sigma((i_1,\ldots,i_r))=\nu$. By symmetry, any choice of $i_1,\ldots,i_r$ results in the same derivative.

Assume that for all $\nu\in P^+$, $|c_\nu^N|= N^{o_N(1)}$. Then, $\{\mu_N\}_{N\geq 1}$ satisfies a $c$-LLN with free cumulants $\{c_k\}_{k\geq 1}$ if and only if 
\begin{equation}
\label{eq:cumulantlimit}
\lim_{N\rightarrow\infty} \theta^{k-1}\sum_{\nu\in P, |\nu|=k}(-1)^{\ell(\nu)-1} \frac{|\nu|P(\nu)}{\ell(\nu)} c_\nu^N = c_k
\end{equation}
for all $k\geq 1$. Recall that if the free cumulants are $\{c_k\}_{k\geq 1}$, then the moments are $m_k=\sum_{\pi\in NC(k)} \prod_{B\in \pi}c_{|B|}$ for $k\geq 1$.
\end{theorem}

In \Cref{sec:coeff}, we also discuss a generalization of the previous theorem to the regime $|\theta_NN|\rightarrow\infty$, see \Cref{thm:equivalence2}.

\subsection{Related works}

\Cref{lln} generalizes Claim 9.1 of the paper \cite{matrix}. Furthermore, similarly to the proof of Theorem 3.8 of the paper, which corresponds to the case $c=1$, we prove \Cref{lln} by evaluating Bessel generating functions at $x_i=0$, $1\leq i\leq N$ after applying Dunkl operators as well as other operators. Additionally, we consider when the limit of any partial derivative can be nonzero. On the other hand, in Theorem 3.8 and Claim 9.1 of the paper, the limits of partial derivatives with two or more distinct indices are $0$. The arbitrary limits of partial derivatives are a reason why formal power series and in particular \Cref{lln:generalization} are needed to prove \Cref{lln}. 

Additionally, \cites{representations, fluctuations, unitary,llnclt,qmk,gaussianfluctuation,jackgenfunc,perelomov-popov,rectangularmatrix} consider similar results characterized by the limits of partial derivatives of Bessel, Jack, and Schur generating functions. If the partial derivatives involve only one index, the limit can be nonzero, and in the papers \cites{fluctuations, unitary,llnclt,gaussianfluctuation} studying the Central Limit Theorem, if two distinct indices are involved, the limit can be nonzero. However, if three or more distinct indices are involved, the limit must be zero. In this paper, we consider when the limits of partial derivatives involving any number of distinct indices can be nonzero. 

The paper \cite{gaussianfluctuation} shows LLN and CLT results for the case $c=0$ and $\theta=1$ and does so after evaluating the Bessel generating function at a limiting distribution as $N\rightarrow\infty$ rather than at the origin. In particular, the paper evaluates the logarithm of the scaled Bessel generating function
\[
\mathbb{E}_{(a_1,\ldots,a_N)\sim\mu}\left[\frac{B_{(a_1,\ldots,a_N)}(x_1,\ldots,x_N; 1)}{B_{(a_1,\ldots,a_N)}(\chi; 1)}\right]
\]
at $(x_1,\ldots,x_N)=\chi$ for $\chi\in\mathbb{R}^N$; we consider the case when $\chi=(0,\ldots,0)$. However, we believe that the results of this paper are true for general values of $\chi$, since the results of \Cref{sec:formal,sec:main} are true in this setting. 

We introduce the operator $\mathcal{Q}_i^N$ in \Cref{sec:formal} which computes the leading order terms after applying Dunkl operators to formal power series symmetric in all but one variable. The main contribution of this paper is the computation in the section of the leading order terms of the constant term of formal power series after the application of products of the $\mathcal{Q}_i^N$ operators. 

\subsection{Formal Power Series}
\label{subsec:powerseries}

Suppose $x_1,\ldots,x_N$ are variables and $\vec{x}=(x_1,\ldots,x_N)$. A formal power series $F(x_1,\ldots,x_N)$ can be expressed as
\[
F(x_1,\ldots,x_N)=\sum_{\alpha=(\alpha_1,\ldots,\alpha_N)\in\mathbb{Z}_{\geq 0}^N} c_F^\alpha \prod_{i=1}^N x_i^{\alpha_i},
\]
where the $c_F^\alpha\in\mathbb{C}$ are constants. For brevity, we often use the term formal series to refer to a formal power series. 

The most important coefficient of a formal series $F(x_1,\ldots,x_N)$ in this paper is the constant term $[1]F(x_1,\ldots,x_N)=c_F^{(0,\ldots,0)}$. Particularly, we study the asymptotics of the constant terms of formal series after applying sequences of operators in the $N\rightarrow\infty$ limit.

We also consider formal series that are symmetric or symmetric in $N-1$ of the $x_i$. Suppose $N$ is a positive integer. For $\vec{x}=(x_1,\ldots, x_N)$ and $\nu\in P$ such that $\ell(\nu)\leq N$, let
\[
M_{\nu}(\vec{x})=\sum_{\substack{a=(a_1, \ldots, a_N), \\ a_i\in \mathbb{Z}_{\geq 0}, 1\leq i\leq N, \pi(a)=\nu}} \prod_{i=1}^N x_i^{a_i}.
\]

Note that $N$ can be $\infty$, in which case $\vec{x}=(x_i)_{i\geq 1}$. We define the equivalent partitions of infinite sequences of positive integers. 
\begin{definition}
\label{def:equivpart} Let $a=(a_i)_{i\geq 1}$ be a sequence of nonnegative integers such that $a_i=0$ for all $i>M$ for some positive integer $M$. Then, the \textit{equivalent partition} of $a$, denoted by $\pi(a)$, is the equivalent partition of $(a_i)_{1\leq i\leq M}$. 
\end{definition}
Let $Z$ be the set of sequences $(a_i)_{i\geq 1}$ of nonnegative integers such that $a_i=0$ for $i>M$ for some positive integer $M$. For $\vec{x}=(x_i)_{i\geq 1}$ and $\nu\in P$ we have that
\[
M_\nu(\vec{x})=\sum_{\substack{(a_i)_{i\geq 1}\in Z, \\ \pi((a_i)_{i\geq 1})=\nu}}\prod_{i\geq 1, a_i>0}x_i^{a_i}.
\]

For a symmetric formal series $F(x_1,\ldots, x_N)$, let $c_F^\nu\in\mathbb{C}$ be the coefficient of $M_\nu$ in $F$ for all $\nu\in P$ such that $\ell(\nu)\leq N$ so that
\[
F(x_1,\ldots,x_N) = \sum_{\nu\in P, \ell(\nu)\leq N} c_F^\nu M_\nu(x_1, \ldots, x_N),
\]

Suppose $N$ and $i$ are positive integers such that $N\geq i$. Suppose $\vec{x}_i=(x_j)_{\substack{1\leq j\leq N, j\not= i}}$ and let $\mathcal{F}_i^N$ denote the set of formal series $F(x_1,\ldots,x_N)$ which are symmetric in $x_j$ for $1\leq j\leq N, j\not= i$. Suppose $F(x_1,\ldots,x_N)\in \mathcal{F}_i^N$. Let $c_F^{d, \nu}\in\mathbb{C}$ be the coefficient of $x_i^dM_\nu(\vec{x}_i)$ in $F(x_1, \ldots, x_N)$ for $\nu\in P$ such that $\ell(\nu)\leq N-1$ so that
\[
F(x_1,\ldots,x_N) = \sum_{d=0}^\infty\left(\sum_{\nu\in P, \ell(\nu)\leq N-1} c_F^{d,\nu} M_\nu(\vec{x}_{i})\right)x_i^d,
\]
Furthermore, we can define formal series given a sequence of coefficients.

\begin{definition}
\label{def:inf}
Suppose $s=\{c^{d,\nu}\}_{d\geq 0, \nu\in P}$ is a sequence of elements of $\mathbb{C}$ and $i$ is a positive integer. For $N\geq i$ let the formal series $F_i(s)(x_1, \ldots, x_N)$ in $\mathcal{F}_i^N$ be
\[
F_i(s)(x_1,\ldots,x_N) = \sum_{d=0}^\infty\sum_{\nu\in P, \ell(\nu)\leq N-1} c^{d,\nu} M_\nu(x_j, 1\leq j\leq N, j\not=i)x_i^d.
\]
Also, let the formal series $F_i(s)$ over $(x_j)_{j\geq 1}$ be
\[
F_i(s) = \sum_{d=0}^\infty\sum_{\nu\in P} c^{d,\nu} M_\nu(x_j, j\geq 1, j\not=i)x_i^d.
\]
\end{definition}

\begin{remark}
We can view $F_i(s)(x_1,\ldots,x_n)$ as $F_i(s)$ with $x_j=0$ for $j>n$.
\end{remark}

We focus on $F(x_1,\ldots,x_N)$ symmetric in $N-1$ of the $x_i$ in this paper. Moreover, with \Cref{def:inf}, it is possible to consider the limits of sequences of formal series as $N$ increases to infinity. Particularly, the limit of a sequence $\{F_N(x_1,\ldots,x_N)\}_{N\geq i}$ with $F_N(x_1,\ldots,x_N)\in\mathcal{F}_i^N$ for $N\geq i$ can be viewed as $F_i(s)$ for some $s$ if the limit exists. This concept is discussed more in \Cref{sec:formal}.

\subsection{Paper Organization}

The organization of this paper is as follows. In \Cref{sec:formal}, we discuss results on applying sequences of operators to formal series with $N$ variables which are symmetric in $N-1$ variables as $N$ increases to $\infty$. Afterwards, in \Cref{sec:main}, we prove results involved in the proof of \Cref{lln}, and in \Cref{sec:measures}, we prove the theorem. Following this, in \Cref{sec:eigenvalue}, we apply \Cref{lln} to the $\beta$-Hermite ensemble and discuss a generalization of the theorem, see \Cref{cor:scale}. Finally, in \Cref{sec:coeff}, we consider the coefficients resulting from applications of Dunkl operators as polynomials in $\theta$ and $N$ and prove \Cref{thm:equivalence} and discuss a generalization to the regime $|N\theta_N|\rightarrow\infty$.

\textbf{Acknowledgements.} This paper is a continuation of research done in MIT UROP+. I would like to thank my UROP+ mentor Matthew Nicoletti for providing useful guidance and suggesting the problem. Also, I would like to thank Alexei Borodin and Vadim Gorin for giving comments on this paper.

\section{Limits of Operators}
\label{sec:formal}
In this section, we look at operators on formal series in $x_j$, $1\leq j\leq N$ such that there exists $i$, $1\leq i\leq N$ for which the formal series is symmetric for $j\not=i$. Recall that the set of such formal series is $\mathcal{F}_i^N$. The main results in this section are \Cref{value} and \Cref{finalvalue1}, with \Cref{value} being a special case of \Cref{finalvalue1}. 

Furthermore, in this section, $\theta\in\mathbb{C}$ and the sequence $\{\theta_N\}_{N\geq 1}$ of complex numbers satisfies $\lim_{N\rightarrow\infty}\theta_N=\theta$. Note that $\theta$ does not need to have nonnegative real part, which is in contrast to the statements of \Cref{lln} and \Cref{thm:equivalence}.

\subsection{Basic Results} For a partition $\nu=(a_1\geq a_2\geq\cdots\geq a_m)$, let
\[
S(\nu)=\{(p_1, p_2)|p_i=(p_{i,1}, \ldots, p_{i,m}), 1\leq i\leq 2; p_{1,j}+p_{2,j}=a_j, 1\leq j\leq m\}.
\]

For $N$ variables $x_1, \ldots, x_N$, let $s_{i,j}$ be the operator which switches $x_i$ and $x_j$, for distinct $i,j\in[N]$. Note that $s_{i,j}$ is essential for the definition of the Dunkl operator, see \Cref{def:operators}. Furthermore, for $1\leq i\leq N$, let $d_i$ be the operator such that for nonnegative integers $a_k$, $1\leq k\leq N$, 
\begin{equation}
\label{eq:decdeg}
d_i\left(\prod_{k=1}^N x_k^{a_k}\right) = \begin{cases}
x_i^{a_i-1}\displaystyle\prod_{1\leq k\leq N, k\not= i} x_k^{a_k} & \text{if $a_i\geq 1$,} \\
0 & \text{ if $a_i=0$}.
\end{cases}
\end{equation}

Also, for $1\leq i,j\leq N$, $i\not=j$, let $C_{i,j}$ be the operator such that for nonnegative integers $a_k$, $1\leq k\leq N$, 
\begin{equation}
\label{eq:change}
C_{i,j}\left(\prod_{k=1}^N x_k^{a_k}\right) = \begin{cases}
x_i^{a_j-1}\displaystyle\prod_{1\leq k\leq N, k\not= i,j} x_k^{a_k} & \text{if $a_i=0$ and $a_j\geq 1$}, \\
0 & \text{if $a_i\geq 1$ or $a_i=a_j=0$}.
\end{cases}
\end{equation}
With this operator, for positive integers $N\geq i$ and a formal series $f(x_1,\ldots,x_N)\in \mathcal{F}_i^N$, let the operator $\mathcal{Q}_i^N(f(x_1,\ldots,x_N))$ be
\begin{equation}
\label{eq:operator2}
    \mathcal{Q}_i^N(f(x_1,\ldots,x_N)) =\theta_N\sum_{\substack{1\leq j\leq N, \\ j\not= i}} \frac{d_i-C_{i,j}}{N}+f(x_1, \ldots, x_N).
\end{equation}
In \Cref{operatorpoly}, we show that $\mathcal{Q}_i^N(f(x_1,\ldots,x_N))$ is an operator from $\mathcal{F}_i^N$ to $\mathcal{F}_i^N$. 

For $f(x_1,\ldots,x_N)\in\mathcal{F}_i^N$ we sometimes denote $\mathcal{Q}_i^N(f(x_1,\ldots,x_N))$ by $\mathcal{Q}_i^N(f)$, and for $s=\{c^{d,\nu}\}_{d\geq 0,\nu\in P}$ we sometimes denote $\mathcal{Q}_i^N(F_i(s)(x_1,\ldots,x_N))$ by $\mathcal{Q}_i^N(F_i(s))$. Furthermore, for operators $\mathcal{T}_i$, $1\leq i\leq m$ the product
\[\prod_{i=1}^m \mathcal{T}_i \]
denotes the operator $\mathcal{T}_m\circ\mathcal{T}_{m-1}\circ\cdots\circ\mathcal{T}_1$.

\begin{remark}
Under certain conditions, the $\mathcal{Q}_i^N$ operators are asymptotically equivalent to Dunkl operators, see the proof of \Cref{remainder3}. We analyze them rather than Dunkl operators directly for simplicity. It would be interesting if results similar to this section's results are true for Dunkl operators.
\end{remark}

\begin{definition}
\label{def:limitingsequence}
For a positive integer $i$, a sequence of formal series $\{f_N(x_1,\ldots,x_N)\}_{N\geq i}$ is \textit{symmetric outside of $i$} if $f_N(x_1,\ldots,x_N)\in \mathcal{F}_i^N$ for all $N\geq i$ and $\lim_{N\rightarrow\infty} c_{f_N}^{d,\nu}$ exists for all $d\geq 0$ and $\nu\in P$. If $f=\{f_N(x_1,\ldots,x_N)\}_{N\geq i}$ is symmetric outside of $i$, the $\textit{limiting sequence outside of $i$}$ of $f$ is $\{\lim_{N\rightarrow\infty} c_{f_N}^{d,\nu}\}_{d\geq 0, \nu\in P}$, and the \textit{limit outside of $i$} of $f$ is
\[\lim_{N\rightarrow\infty} f_N(x_1,\ldots,x_N) \triangleq F_i\left(\{\lim_{N\rightarrow\infty} c_{f_N}^{d,\nu}\}_{d\geq 0, \nu\in P}\right).
\]
\end{definition}

\begin{proposition}
\label{operatorpoly}
For $f(x_1,\ldots, x_N)$ and $g(x_1,\ldots, x_N)$ in $\mathcal{F}_i^N$, $\mathcal{Q}_i^N(f)g(x_1,\ldots,x_N)$ is in $\mathcal{F}_i^N$, and is
\begin{align*}
& \sum_{d=0}^\infty \sum_{\nu\in P, \ell(\nu)\leq N-1}\left(\theta_N c_g^{d+1, \nu}-\theta_N c_g^{0, \nu + (d+1)} + \sum_{\substack{a+b=d, \\ (p_1, p_2)\in S(\nu)}} c_f^{a,\pi(p_1)}c_g^{b,\pi(p_2)}\right)M_{\nu}(\vec{x}_i)x_i^d \\ 
& +\frac{\theta_N}{N}\cdot\sum_{d=0}^\infty \sum_{\nu\in P, \ell(\nu)\leq N-1}(-c_g^{d+1,\nu}+(\ell(\nu)+1)c_g^{0,\nu+(d+1)})M_\nu(\vec{x}_i)x_i^d. 
\end{align*}
\end{proposition}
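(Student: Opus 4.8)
The plan is to split $\mathcal{Q}_i^N(f)$ into its three constituent operators and track the effect of each on the monomial basis of $\mathcal{F}_i^N$. By \pref{eq:operator2}, for $g\in\mathcal{F}_i^N$ we have
\[
\mathcal{Q}_i^N(f)\,g \;=\; \frac{\theta}{N}\sum_{\substack{1\leq j\leq N\\ j\neq i}} d_i(g)\;-\;\frac{\theta}{N}\sum_{\substack{1\leq j\leq N\\ j\neq i}} C_{i,j}(g)\;+\;f\cdot g ,
\]
so it suffices to expand $g=\sum_{d\geq 0}\sum_{\nu\in P,\,\ell(\nu)\leq N-1}c_g^{d,\nu}M_\nu(\vec{x}_i)x_i^d$ as in \pref{eq:formalseriescoeff}, compute the action of each of the three operators on a single basis element $M_\nu(\vec{x}_i)x_i^d$, check that each output again lies in $\mathcal{F}_i^N$, and add the resulting coefficients. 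At every step one uses that $M_\nu(\vec{x}_i)\equiv 0$ for $\ell(\nu)>N-1$, since $\vec{x}_i$ has only $N-1$ coordinates; this is what restricts all the sums to $\ell(\nu)\leq N-1$, as each of the three operators produces only monomials supported on those $N-1$ coordinates times a power of $x_i$.

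Two of the three pieces are immediate. Since $d_i$ changes only the exponent of $x_i$ (lowering it by one and killing $x_i$-free terms), $d_i(g)\in\mathcal{F}_i^N$, and the coefficient of $M_\nu(\vec{x}_i)x_i^d$ in $\frac{\theta}{N}\sum_{j\neq i}d_i(g)=\frac{\theta(N-1)}{N}d_i(g)$ is $\frac{\theta(N-1)}{N}c_g^{d+1,\nu}=\theta c_g^{d+1,\nu}-\frac{\theta}{N}c_g^{d+1,\nu}$, supplying the $\theta c_g^{d+1,\nu}$ term of the first sum and the $-\frac{\theta}{N}c_g^{d+1,\nu}$ term of the second. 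For the product: $\mathcal{F}_i^N$ is closed under multiplication, and expanding $\bigl(\sum_{a,\mu}c_f^{a,\mu}M_\mu(\vec{x}_i)x_i^a\bigr)\bigl(\sum_{b,\lambda}c_g^{b,\lambda}M_\lambda(\vec{x}_i)x_i^b\bigr)$ reduces to the product rule $M_\mu(\vec{x}_i)M_\lambda(\vec{x}_i)=\sum_{\nu}N_{\mu\lambda}^{\nu}M_\nu(\vec{x}_i)$, where $N_{\mu\lambda}^{\nu}$ counts the $(p_1,p_2)\in S(\nu)$ with $\pi(p_1)=\mu$ and $\pi(p_2)=\lambda$ (one verifies this by splitting each exponent of a fixed monomial of $M_\nu(\vec{x}_i)$ in all possible ways). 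Collecting the power of $x_i$, the coefficient of $M_\nu(\vec{x}_i)x_i^d$ in $f\cdot g$ becomes $\sum_{a+b=d}\sum_{(p_1,p_2)\in S(\nu)}c_f^{a,\pi(p_1)}c_g^{b,\pi(p_2)}$, matching the remaining term of the first sum.

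The substantive step is the $C_{i,j}$ piece, and \textbf{the main obstacle is that $C_{i,j}$ is not by itself symmetric in $\vec{x}_i$}: membership of this piece in $\mathcal{F}_i^N$ holds only for the full sum $\sum_{j\neq i}C_{i,j}$, which is manifestly invariant under permutations of the $x_j$ with $j\neq i$. To evaluate it, observe from \pref{eq:change} that $C_{i,j}$ annihilates every monomial containing $x_i$, so only the $x_i$-free part $\sum_{\nu}c_g^{0,\nu}M_\nu(\vec{x}_i)$ of $g$ matters. Fix a target monomial $x_i^d\prod_{k\neq i}x_k^{b_k}$ with $\pi\bigl((b_k)_{k\neq i}\bigr)=\nu$; by \pref{eq:change} it is produced exactly by applying $C_{i,j}$ to the monomial $x_j^{\,d+1}\prod_{k\neq i,\,j}x_k^{b_k}$, which lies in $M_{\nu+(d+1)}(\vec{x}_i)$, and this requires $b_j=0$. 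Exactly $\ell(\nu)$ of the $N-1$ indices $j\neq i$ have $b_j\neq 0$, so there are $N-1-\ell(\nu)$ admissible $j$, whence the coefficient of $M_\nu(\vec{x}_i)x_i^d$ in $\sum_{j\neq i}C_{i,j}(g)$ equals $\bigl(N-1-\ell(\nu)\bigr)c_g^{0,\nu+(d+1)}$. Multiplying by $-\theta/N$ and using $\frac{N-1-\ell(\nu)}{N}=1-\frac{\ell(\nu)+1}{N}$ produces $-\theta c_g^{0,\nu+(d+1)}+\frac{\theta}{N}(\ell(\nu)+1)c_g^{0,\nu+(d+1)}$, i.e. the remaining $c_g^{0,\nu+(d+1)}$ terms in the two sums. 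Adding the three contributions coefficient by coefficient gives the stated formula, and since each piece lies in $\mathcal{F}_i^N$, so does $\mathcal{Q}_i^N(f)g$. The only points needing care, beyond the symmetrization already noted, are keeping the two monomial bases $M_\nu(\vec{x})$ (in $N$ variables) and $M_\nu(\vec{x}_i)$ (in $N-1$ variables) apart, and pinning down the count $N-1-\ell(\nu)$ precisely.
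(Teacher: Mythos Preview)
Your proof is correct and follows essentially the same approach as the paper: both split $\mathcal{Q}_i^N(f)g$ into the three pieces $d_i(g)$, $\sum_{j\neq i}C_{i,j}(g)$, and $f\cdot g$, compute the coefficient of $M_\nu(\vec{x}_i)x_i^d$ in each, and assemble. Your count $N-1-\ell(\nu)$ for the $C_{i,j}$ piece and your structure-constant description of $M_\mu M_\lambda$ match the paper's direct computations exactly; the only stylistic difference is that the paper spells out the bijection between factorizations $q=q_1q_2$ of a fixed monomial and pairs $(p_1,p_2)\in S(\nu)$, whereas you invoke it more succinctly.
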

\begin{proof}
Let
\begin{align*}
g_1(x_1, \ldots, x_N)=d_ig(x_1, \ldots, x_N) &= \sum_{d=1}^\infty \sum_{\nu\in P,\ell(\nu)\leq N-1} c_{g}^{d,\nu}M_{\nu}(\vec{x}_{i})x_i^{d-1} \\ & =\sum_{d=0}^\infty \sum_{\nu\in P,\ell(\nu)\leq N-1} c_{g}^{d+1,\nu}M_{\nu}(\vec{x}_{i})x_i^d.
\end{align*}

Also, let
\begin{align*}
g_2(x_1, \ldots, x_N)& =\sum_{1\leq j\leq N, j\not= i} C_{i,j} g(x_1, \ldots, x_N) \\ & = \left(\sum_{1\leq j\leq N, j\not= i} C_{i,j}\right)\left(\sum_{\nu\in P, \ell(\nu)\leq N-1} c_{g}^{0, \nu} M_\nu(\vec{x}_i)\right).
\end{align*}
Observe that $g_2(x_1, \ldots, x_N)$ is a formal power series which is symmetric in $\vec{x}_i$. We find the coefficient of $M_\nu(\vec{x}_i)x_i^d$. If $\ell(\nu)\geq N$, the coefficient is $0$ in $g_2(x_1,\ldots,x_N)$. Then, suppose $\ell(\nu)\leq N-1$. For a monomial $p$ in $M_\nu(\vec{x}_i)$ with variables $(\vec{x}_i)_j$, $1\leq j\leq \ell(\nu)$, there are $N-1-\ell(\nu)$ $j$ such that $x_j$ is not in $p$, and these are the $j$ such that $C_{i,j}$ applied to a monomial will give $px_i^d$. For a monomial $q$, if $C_{i,j}q=px_i^d$, then $q=px_j^{d+1}$ and has coefficient $c_{g}^{0, \nu+(d+1)}$ in $g$. Therefore, the coefficient of $px_i^d$, and thus $M_\nu(\vec{x}_i)x_i^d$, in $g_2(x_1, \ldots, x_N)$ is $(N-\ell(\nu)-1)c_g^{0, \nu+(d+1)}$. From this,
\[g_2(x_1, \ldots, x_N)=\sum_{d=0}^\infty\sum_{\nu\in P,\ell(\nu)\leq N-1} (N-\ell(\nu)-1) c_g^{0, \nu+(d+1)}M_\nu(\vec{x}_i)x_i^d.\]

Next, let
\begin{align*}
    g_3(x_1, \ldots, x_N) & = f(x_1, \ldots, x_N)g(x_1, \ldots, x_N) \\
    & = \sum_{d_1, d_2=0}^\infty\sum_{\nu_1, \nu_2\in P} c_f^{d_1, \nu_1}c_g^{d_2, \nu_2}M_{\nu_1}(\vec{x}_i)M_{\nu_2}(\vec{x}_i)x_i^{d_1+d_2}.
\end{align*}
For $d\geq 0$ and $\nu\in P$, we find the coefficient of $M_\nu(\vec{x}_i)x_i^d$ in $g_3(x_1,\ldots,x_N)$. If $\ell(\nu)\geq N$, the coefficient will be $0$. Suppose $\ell(\nu)\leq N-1$, and let
\[
q = \prod_{j=1}^{\ell(\nu)} x_{(\vec{x}_i)_j}^{\nu_j}.
\]
Note that if $q_1$ and $q_2$ are monic monomials such that $q_1q_2=q$, then for $1\leq s\leq 2$, if $b_{s,j}$ is the degree of $x_{(\vec{x}_i)_j}$ in $q_s$ for $1\leq j\leq \ell(\nu)$ and $p(q_s)=(b_{s,1}, \ldots, b_{s, \ell(\nu)})$, $(p(q_1), p(q_2))\in S(\nu)$. We see that if $S$ is the set of $(q_1,q_2)$ such that $q_1$ and $q_2$ are monic monomials with $q_1q_2=q$,
\[p:S\rightarrow S(\nu), (q_1,q_2)\mapsto (p(q_1),p(q_2))\]
is injective as well as surjective, and is therefore a bijection. Also, the coefficient of $q_1x_1^a$ and $q_2x_1^b$ is $c_f^{a, \pi(p(q_1))}$ in $f$ and $c_g^{b, \pi(p(q_2))}$ in $g$, respectively. Then, the coefficient of $qx_i^d$ is
\[
\sum_{\substack{a+b = d, \\ q_1q_2=q}} c_f^{a, \pi(p(q_1))}c_g^{b, \pi(p(q_2))}=\sum_{\substack{a+b=d, \\ (p_1, p_2)\in S(\nu)}} c_f^{a, \pi(p_1)}c_g^{b, \pi(p_2)},
\]
which is also the coefficient of $M_\nu(\vec{x}_i)x_i^d$ in $g_3(x_1,\ldots,x_N)$. From this,
\[
g_3(x_1,\ldots,x_N) = \sum_{d=0}^\infty\sum_{\nu\in P, \ell(\nu)\leq N-1}\left(\sum_{\substack{a+b=d, \\ (p_1,p_2)\in S(\nu)}} c_f^{a,\pi(p_1)}c_g^{b,\pi(p_2)}\right)M_\nu(\vec{x}_i)x_i^d.
\]

Since $g_1(x_1,\ldots, x_N)$, $g_2(x_1,\ldots,x_N)$, and $g_3(x_1,\ldots,x_N)$ are in $\mathcal{F}_i^N$, $\mathcal{Q}_i(f)g(x_1,\ldots,x_N)$ is in $\mathcal{F}_i^N$, with
\[
\mathcal{Q}_i(f)g(x_1,\ldots,x_N) = \frac{(N-1)\theta_N g_1(x_1,\ldots x_N)}{N}-\frac{\theta_N g_2(x_1,\ldots,x_N)}{N}+g_3(x_1,\ldots,x_N).
\]
This completes the proof.
\end{proof}

\begin{corollary}
\label{limitexists}
Let $m\geq 0$ be an integer. Suppose $\{f_{j,N}(x_1,\ldots,x_N)\}_{N\geq i}$ for $1\leq j\leq m$ and $\{g_N(x_1,\ldots,x_N)\}_{N\geq i}$ are symmetric outside of $i$. Then, $\\ \left\{ \left(\prod_{j=1}^m\mathcal{Q}_i^N(f_{j,N})\right) g_N(x_1, \ldots, x_N)\right\}_{N\geq i}$ is symmetric outside of $i$. Also, the limit outside of $i$ of $\mathcal{Q}_i^N(f_{1,N})g_N(x_1,\ldots,x_N)$ as $N\rightarrow\infty$ is
\begin{equation}
\label{eq:limitoperator}
\sum_{d=0}^\infty \sum_{\nu\in P} \lim_{N\rightarrow\infty}\left(\theta c_{g_N}^{d+1, \nu}-\theta c_{g_N}^{0, \nu + (d+1)} + \sum_{\substack{a+b=d, \\ (p_1, p_2)\in S(\nu)}} c_{f_{1,N}}^{a,\pi(p_1)}c_{g_N}^{b,\pi(p_2)}\right)M_\nu(\vec{x}_{i})x_i^d.
\end{equation}
\end{corollary}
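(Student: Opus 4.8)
The plan is to reduce everything to \Cref{operatorpoly} together with an induction on $m$. The case $m=0$ is trivial: the empty product is the identity operator, so $\bigl(\prod_{j=1}^{0}\mathcal{Q}_i^N(f_{j,N})\bigr)g_N=g_N$, whose limit outside of $i$ exists by assumption. The heart of the argument is the case $m=1$, which will also yield the explicit formula \pref{eq:limitoperator}.

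For $m=1$ I would invoke \Cref{operatorpoly} with $f=f_{1,N}$ and $g=g_N$: for every fixed $(d,\nu)$ and every $N$ with $\ell(\nu)\le N-1$, the coefficient of $M_\nu(\vec{x}_i)x_i^d$ in $\mathcal{Q}_i^N(f_{1,N})g_N$ is the sum of a ``main'' part
\[
\theta c_{g_N}^{d+1,\nu}-\theta c_{g_N}^{0,\nu+(d+1)}+\sum_{\substack{a+b=d,\\(p_1,p_2)\in S(\nu)}}c_{f_{1,N}}^{a,\pi(p_1)}c_{g_N}^{b,\pi(p_2)}
\]
and a ``correction'' $\tfrac{\theta}{N}\bigl(-c_{g_N}^{d+1,\nu}+(\ell(\nu)+1)c_{g_N}^{0,\nu+(d+1)}\bigr)$. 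Now send $N\to\infty$ coefficient-wise. The sum over $a+b=d$ and $(p_1,p_2)\in S(\nu)$ is \emph{finite} for fixed $(d,\nu)$, and each of the coefficients $c_{f_{1,N}}^{a,\pi(p_1)}$, $c_{g_N}^{b,\pi(p_2)}$, $c_{g_N}^{d+1,\nu}$, $c_{g_N}^{0,\nu+(d+1)}$ converges (here $\nu+(d+1)$ is a bona fide partition since $d+1\ge1$), so the limit passes through the finite sum and through the products, and the main part tends to exactly the coefficient appearing in \pref{eq:limitoperator}. In the correction term, $\ell(\nu)$ does not depend on $N$ and convergent sequences are bounded, so the whole term is $O(1/N)\to0$. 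Hence every coefficient of $\mathcal{Q}_i^N(f_{1,N})g_N$ converges, so its limit outside of $i$ exists, and that limit is given by \pref{eq:limitoperator}.

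For the inductive step, suppose the statement holds for $m-1$. Iterating \Cref{operatorpoly} shows that applying $\mathcal{Q}_i^N$ keeps one inside $\mathcal{F}_i^N$, so $h_N:=\bigl(\prod_{j=1}^{m-1}\mathcal{Q}_i^N(f_{j,N})\bigr)g_N$ is symmetrical outside of $i$, and by the inductive hypothesis its limit outside of $i$ exists. By the product convention, $\bigl(\prod_{j=1}^{m}\mathcal{Q}_i^N(f_{j,N})\bigr)g_N=\mathcal{Q}_i^N(f_{m,N})\,h_N$, so the already-proved $m=1$ case applied to $f_{m,N}$ (whose limit outside of $i$ exists by hypothesis) and $h_N$ shows that this limit exists, completing the induction.

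I do not anticipate a real obstacle here. The only steps requiring care are checking that the $\tfrac1N$-correction in \Cref{operatorpoly} is genuinely negligible — which uses only that $\ell(\nu)$ is independent of $N$ and that convergent sequences are bounded — and justifying the interchange of limit and summation in the main part, which is unproblematic precisely because that sum is finite for each fixed $(d,\nu)$, so no uniform estimates or delicate interchange-of-limits arguments are needed.
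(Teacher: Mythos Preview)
Your proposal is correct and follows essentially the same approach as the paper: the paper's proof says only that $m=0$ is clear, that $m=1$ follows by taking the $N\to\infty$ limit of \Cref{operatorpoly}, and that the rest is induction from $m=1$. Your write-up simply makes explicit the two points the paper leaves implicit, namely that the $\tfrac{\theta}{N}$ correction vanishes and that the sum over $(a,b,p_1,p_2)$ is finite for fixed $(d,\nu)$.
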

\begin{proof}
The $m=0$ case is clear. For $m=1$, from \Cref{operatorpoly}, it is clear that the expression is symmetric outside of $i$. Furthermore, the result implies that the limit of $\mathcal{Q}_i^N(f_{1,N})g_N(x_1,\ldots,x_N)$ as $N\rightarrow\infty$ with respect to $i$ exists and equals \pref{eq:limitoperator}. The rest of \Cref{limitexists} can be proved using induction.
\end{proof}
\begin{proposition}
\label{limit}
Let $i\geq 1$ and $m\geq 0$ be integers. Suppose $\{f_{j,N}(x_1,\ldots,x_N)\}_{N\geq i}$ for $1\leq j\leq m$ and $\{g_N(x_1,\ldots,x_N)\}_{N\geq i}$ are symmetric outside of $i$. Assume that for $1\leq j\leq m$, $f_{j,N}(x_1, \ldots, x_N)$ has limiting sequence $f_j$ and $g_N(x_1, \ldots, x_N)$ has limiting sequence $g$ outside of $i$. Then, outside of $i$,
\[
\lim_{N\rightarrow\infty} \left(\prod_{j=1}^m\mathcal{Q}_i^N(f_{j,N})\right) g_N(x_1, \ldots, x_N) = \lim_{N\rightarrow\infty} \left(\prod_{j=1}^m\mathcal{Q}_i^N(F_i(f_j))\right) F_i(g)(x_1, \ldots, x_N).
\]
\end{proposition}
\begin{proof}
From \Cref{limitexists}, both limits exist. Suppose that for $1\leq j\leq m$, $f_j=\{c_{f_j}^{d,\nu}\}_{d\geq 0, \nu\in P}$, and $g=\{c_g^{d,\nu}\}_{d\geq 0, \nu\in P}$. We use induction on $m$, where the base case $m=0$ is clear. For $m=1$, we can use \pref{eq:limitoperator} to show that $\lim_{N\rightarrow\infty} \mathcal{Q}_i^N(f_{1,N})g_N(x_1,\ldots,x_N)=\lim_{N\rightarrow\infty} \mathcal{Q}_i^N(F(f_1))F(g)(x_1,\ldots,x_N)$. 

Assume the statement is true for $m\geq 1$. We want to prove the statement is true for $m+1$. Let
\[r_N(x_1, \ldots, x_N) = \mathcal{Q}_{i}^N(f_{1,N}) g_N(x_1, \ldots, x_N),\]
and $r$ be the limiting sequence of $\mathcal{Q}_{i}^N(F_i(f_1)) F_i(g)(x_1, \ldots, x_N)$ as $N\rightarrow\infty$ outside of $i$. From $m=1$, $\lim_{N\rightarrow\infty} r_N(x_1, \ldots, x_N) = \lim_{N\rightarrow\infty} \mathcal{Q}_{i}^N(F_i(f_1)) F_i(g)(x_1, \ldots, x_N) = F_i(r)$, or $r$ is the limiting sequence of $r_N(x_1,\ldots,x_N)$ as $N\rightarrow\infty$ outside of $i$. Then, by the inductive hypothesis,
\begin{align*}
& \lim_{N\rightarrow\infty} \left(\prod_{j=1}^{m+1}\mathcal{Q}_i^N(f_{j,N})\right) g_N(x_1, \ldots, x_N) = \lim_{N\rightarrow\infty} \left(\prod_{j=2}^{m+1}\mathcal{Q}_i^N(f_{j,N})\right) r_N(x_1, \ldots, x_N) \\
& = \lim_{N\rightarrow\infty} \left(\prod_{j=2}^{m+1}\mathcal{Q}_i^N(F_i(f_j))\right) F_i(r)(x_1, \ldots, x_N)
\\
& = \lim_{N\rightarrow\infty} \left(\prod_{j=2}^{m+1}\mathcal{Q}_i^N(F_i(f_j))\right) \mathcal{Q}_i^N(F_i(f_1))F_i(g)(x_1, \ldots, x_N) \\
& = \lim_{N\rightarrow\infty} \left(\prod_{j=1}^{m+1}\mathcal{Q}_i^N(F_i(f_j))\right) F_i(g)(x_1, \ldots, x_N). 
\end{align*}
This completes the proof.
\end{proof}
\begin{proposition}
\label{prop:commutativeswitch}
Suppose that $i, j$ are integers such that $1\leq i,j\leq N$ and $i\not= j$. Then, for integers $k$ with $1\leq k \leq N$ and $k\not= i,j$, if $f\in\mathcal{F}_k^N$, $s_{i,j}\mathcal{Q}_k^N(f) = \mathcal{Q}_k^N(f) s_{i,j}$
as operators from $\mathcal{F}_k^N$ to $\mathcal{F}_k^N$. Also, if $f\in\mathcal{F}_i^N$, $s_{i,j}\mathcal{Q}_i^N(f) = \mathcal{Q}_j^N(s_{i,j}f) s_{i,j}$ as operators from $\mathcal{F}_i^N$ to $\mathcal{F}_j^N$.
\end{proposition}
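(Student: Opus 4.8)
The plan is to push the transposition $s_{i,j}$ through the defining expression \pref{eq:operator2} for the $\mathcal{Q}$-operators one ingredient at a time. Write $\sigma$ for the permutation of $\{1,\dots,N\}$ that swaps $i$ and $j$ and fixes everything else, so that $s_{i,j}x_m = x_{\sigma(m)}$ and, unpacking \pref{eq:operator2}, $\mathcal{Q}_k^N(f)g = \frac{\theta}{N}\sum_{1\leq l\leq N,\,l\neq k}(d_k - C_{k,l})g + fg$ for $f,g\in\mathcal{F}_k^N$. First I would record three elementary relabeling facts about operators on formal series in $x_1,\dots,x_N$: (i) $s_{i,j}\,d_k = d_{\sigma(k)}\,s_{i,j}$, which is immediate since $d_k$ only differentiates in $x_k$; (ii) $s_{i,j}\,C_{k,l} = C_{\sigma(k),\sigma(l)}\,s_{i,j}$; and (iii) $s_{i,j}$ is a ring homomorphism, so $s_{i,j}(fg) = (s_{i,j}f)(s_{i,j}g)$ and $s_{i,j}f = f$ whenever $f$ is symmetric in $x_i$ and $x_j$. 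Fact (ii) is the one that needs an argument: I would evaluate both sides on an arbitrary monomial $\prod_m x_m^{a_m}$, set $b_m = a_{\sigma(m)}$ so that $s_{i,j}\prod_m x_m^{a_m} = \prod_m x_m^{b_m}$, and check from \pref{eq:change} that the condition ``$a_k = 0$ and $a_l\geq 1$'' governing $C_{k,l}$ is precisely ``$b_{\sigma(k)} = 0$ and $b_{\sigma(l)}\geq 1$'', while the transferred power $x_k^{a_l-1}$ and the surviving factor $\prod_{m\neq k,l}x_m^{a_m}$ relabel to the output of $C_{\sigma(k),\sigma(l)}$ on $\prod_m x_m^{b_m}$. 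I would also dispatch the bookkeeping on domains: $s_{i,j}$ carries $\mathcal{F}_k^N$ onto itself when $k\neq i,j$ and carries $\mathcal{F}_i^N$ onto $\mathcal{F}_j^N$ (symmetry in all variables but $x_i$ becomes symmetry in all variables but $x_j$), so in particular $s_{i,j}f\in\mathcal{F}_j^N$ and, by \Cref{operatorpoly}, both sides of each asserted identity are operators between the stated spaces; it then suffices to compare their action on an arbitrary $g$ in the domain.

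For the first identity I would take $k\neq i,j$ and $g\in\mathcal{F}_k^N$ and apply $s_{i,j}$ to $\mathcal{Q}_k^N(f)g = \frac{\theta}{N}\sum_{l\neq k}(d_k - C_{k,l})g + fg$. Since $\sigma$ fixes $k$, fact (i) gives $s_{i,j}d_k = d_k s_{i,j}$ and fact (ii) gives $s_{i,j}C_{k,l} = C_{k,\sigma(l)}s_{i,j}$; since $\sigma$ permutes the set $\{l : l\neq k\}$, the sum $\sum_{l\neq k}C_{k,\sigma(l)}$ is just $\sum_{l\neq k}C_{k,l}$ re-indexed; and $s_{i,j}(fg) = f(s_{i,j}g)$ because $f\in\mathcal{F}_k^N$ is symmetric in $x_i$ and $x_j$. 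Collecting terms gives $s_{i,j}\mathcal{Q}_k^N(f)g = \mathcal{Q}_k^N(f)(s_{i,j}g)$, as wanted.

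For the second identity I would take $f,g\in\mathcal{F}_i^N$ and apply $s_{i,j}$ to $\mathcal{Q}_i^N(f)g = \frac{\theta}{N}\sum_{l\neq i}(d_i - C_{i,l})g + fg$; now $\sigma(i) = j$, so fact (i) gives $s_{i,j}d_i = d_j s_{i,j}$, fact (ii) gives $s_{i,j}C_{i,l} = C_{j,\sigma(l)}s_{i,j}$, and fact (iii) gives $s_{i,j}(fg) = (s_{i,j}f)(s_{i,j}g)$, while $\sigma$ maps $\{l : l\neq i\}$ bijectively onto $\{l' : l'\neq j\}$, so the sum re-indexes to $\frac{\theta}{N}\sum_{l'\neq j}(d_j - C_{j,l'})(s_{i,j}g)$. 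Hence $s_{i,j}\mathcal{Q}_i^N(f)g = \mathcal{Q}_j^N(s_{i,j}f)(s_{i,j}g) = \mathcal{Q}_j^N(s_{i,j}f)s_{i,j}\,g$, which is the claim. The only step carrying real content is the monomial verification of fact (ii); the hard part there is bookkeeping in the right direction---the exponent of $x_m$ in $s_{i,j}p$ is the exponent of $x_{\sigma(m)}$ in $p$, and the output variable $x_k$ of $C_{k,l}$ must be tracked to $x_{\sigma(k)}$ in both the vanishing and the transfer branches of \pref{eq:change}. Once (i)--(iii) are in place, both parts are routine, resting only on the $\sigma$-invariance (respectively $\sigma$-equivariance) of the relevant index sets; no estimates or new structural inputs are needed.
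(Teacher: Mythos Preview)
Your proposal is correct and is precisely the expansion the paper has in mind: the paper's own proof is the single sentence ``This is clear from expanding the operators,'' and you have carried out that expansion in full, verifying the monomial identity $s_{i,j}C_{k,l}=C_{\sigma(k),\sigma(l)}s_{i,j}$ and then re-indexing the sum in \pref{eq:operator2}. There is no alternative approach to compare against.
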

\begin{proof} This follows from expanding the operators and applying the following identities:
\begin{itemize}
\item $s_{i,j}d_k=d_k s_{i,j}$, $s_{i,j}C_{k,\ell}=C_{k,\ell}s_{i,j}$, and $s_{i,j}C_{k,j}=C_{k,i}s_{i,j}$, where $k\not=i,j$ and $\ell\not=i,j,k$.
\item $s_{i,j} d_i = d_j s_{i,j}$, $s_{i,j}C_{i,k}=C_{j,k}s_{i,j}$, and $s_{i,j}C_{i,j}=C_{j,i}s_{i,j}$, where $k\not=i,j$.
\end{itemize}
\end{proof}

\subsection{Constant term}
\label{subsec:importantcoeff}
The most important part of the formal series after applying the $\mathcal{Q}_i^N$ operators is the term with degree $0$, see \Cref{sec:main}. For a sequence of coefficients $\{c^{d,\nu}\}_{d\geq 0, \nu\in P}$, this corresponds to $c^{0,0}$. In this subsection, we show \Cref{value} to compute the degree $0$ term following $\mathcal{Q}_i^N$ operators when there is only one value of $i$. Later on, in \Cref{finalvalue1}, we look at the constant term following $\mathcal{Q}_i^N$ operators when there are any number of distinct values of $i$. These values are computed with free cumulants, which are introduce in \Cref{def:cumulants}. However, we must first introduce noncrossing partitions.

Suppose that $\pi$ of is a partition of a finite, nonempty set $S$ of real numbers. Suppose
\[
\pi = B_1\sqcup B_2\sqcup\cdots\sqcup B_m,
\]
with $B_i$, $1\leq i\leq m$ being the blocks of $\pi$, such that the smallest element of $B_{i+1}$ is greater than the smallest element of $B_i$ for $1\leq i\leq m-1$. Also, the length of $\pi$ is $\ell(\pi)=m$. 

A partition $\pi$ is $\textit{noncrossing}$ if for any distinct blocks $B_1$ and $B_2$ of $\pi$, there do not exist $a,b\in B_1$ and $c,d\in B_2$ such that $a<c<b<d$. The paper \cite{KREWERAS} discusses noncrossing partitions extensively. Let the set of noncrossing partitions of a finite, nonempty set $S$ of real numbers be $NC(S)$, and for $k\geq 1$, let $NC(k)=NC([k])$. A way to represent a partition $\pi$ is the $\textit{circular representation}$, where the elements are spaced around a circle in order and the convex hulls of the elements of each block of $\pi$ are added. If $\pi$ is noncrossing, the convex hulls are disjoint.

\begin{figure}[!h]
\centering
\begin{tikzpicture}[line cap=round,line join=round,>=triangle 45,x=1.1cm,y=1.1cm]
\clip(-2,-1.1) rectangle (10,3.3);
\fill[line width=1.5pt,fill=black,fill opacity=0.3] (-0.43969262078590776,2.493620766483186) -- (0.5,2.8356409098088537) -- (1.,0.) -- (0.,0.) -- cycle;
\fill[line width=1.5pt,fill=black,fill opacity=0.3] (1.4396926207859084,2.4936207664831853) -- (1.939692620785908,1.6275953626987465) -- (1.7660444431189777,0.642787609686539) -- cycle;
\node at (0.5,-0.9) {\normalsize Noncrossing partition};
\draw [line width=1.5pt] (0.5,1.3737387097273115) circle (1.4619022000815423*1.1cm);
\draw [line width=1.5pt] (-0.939692620785908,1.6275953626987478)-- (-0.7660444431189781,0.6427876096865397);
\draw [line width=1.5pt] (-0.43969262078590776,2.493620766483186)-- (0.5,2.8356409098088537);
\draw [line width=1.5pt] (0.5,2.8356409098088537)-- (1.,0.);
\draw [line width=1.5pt] (1.,0.)-- (0.,0.);
\draw [line width=1.5pt] (0.,0.)-- (-0.43969262078590776,2.493620766483186);
\draw [line width=1.5pt] (1.4396926207859084,2.4936207664831853)-- (1.939692620785908,1.6275953626987465);
\draw [line width=1.5pt] (1.939692620785908,1.6275953626987465)-- (1.7660444431189777,0.642787609686539);
\draw [line width=1.5pt] (1.7660444431189777,0.642787609686539)-- (1.4396926207859084,2.4936207664831853);
\begin{scriptsize}
\draw [fill=black] (0.,0.) circle (2.pt);
\draw[color=black] (-0.11563625798620362,-0.3177080076873236) node {1};
\draw [fill=black] (1.,0.) circle (2.pt);
\draw[color=black] (1.1156362579862036,-0.3177080076873239) node {2};
\draw [fill=black] (1.7660444431189777,0.642787609686539) circle (2.pt);
\draw[color=black] (2.058845726811989,0.47373870972731047) node {3};
\draw [fill=black] (1.939692620785908,1.6275953626987465) circle (2.pt);
\draw[color=black] (2.272653955421975,1.6863054295277848) node {4};
\draw [fill=black] (1.4396926207859084,2.4936207664831853) circle (2.pt);
\draw[color=black] (1.657017697435771,2.752618707341471) node {5};
\draw [fill=black] (0.5,2.8356409098088537) circle (2.pt);
\draw[color=black] (0.5,3.173738709727311) node {6};
\draw [fill=black] (-0.43969262078590776,2.493620766483186) circle (2.pt);
\draw[color=black] (-0.6570176974357707,2.7526187073414725) node {7};
\draw [fill=black] (-0.939692620785908,1.6275953626987478) circle (2.pt);
\draw[color=black] (-1.2726539554219745,1.6863054295277862) node {8};
\draw [fill=black] (-0.7660444431189781,0.6427876096865397) circle (2.pt);
\draw[color=black] (-1.0588457268119897,0.4737387097273116) node {9};

\fill[line width=1.5pt,fill=black,fill opacity=0.3] (8.439692620785909,2.4936207664831853) -- (8.766044443118978,0.642787609686539) -- (8.,0.) -- cycle;
\fill[line width=1.5pt,fill=black,fill opacity=0.3] (6.560307379214092,2.493620766483186)-- (6.233955556881022,0.6427876096865397) -- (6.060307379214092,1.6275953626987478) -- cycle;
\node at (7.5,-0.9) {\normalsize Crossing partition};
\draw [line width=1.5pt] (7.5,1.373738709727307) circle (1.4619022000815467*1.1cm);
\draw [line width=1.5pt] (7.,0.)-- (8.939692620785908,1.6275953626987465);
\draw [line width=1.5pt] (6.560307379214092,2.493620766483186)-- (6.233955556881022,0.6427876096865397);
\draw [line width=1.5pt] (6.233955556881022,0.6427876096865397)--(6.060307379214092,1.6275953626987478);
\draw [line width=1.5pt] (6.560307379214092,2.493620766483186)--(6.060307379214092,1.6275953626987478);
\draw [line width=1.5pt] (8.439692620785909,2.4936207664831853)-- (8.766044443118978,0.642787609686539);
\draw [line width=1.5pt] (8.766044443118978,0.642787609686539)-- (8.,0.);
\draw [line width=1.5pt] (8.,0.)-- (8.439692620785909,2.4936207664831853);
\draw [fill=black] (7.,0.) circle (2.pt);
\draw[color=black] (7-0.11563625798620362,-0.3177080076873236) node {1};
\draw [fill=black] (8.,0.) circle (2.pt);
\draw[color=black] (8.1156362579862036,-0.3177080076873239) node {2};
\draw [fill=black] (8.7660444431189777,0.642787609686539) circle (2.pt);
\draw[color=black] (9.058845726811989,0.47373870972731047) node {3};
\draw [fill=black] (8.939692620785908,1.6275953626987465) circle (2.pt);
\draw[color=black] (9.272653955421975,1.6863054295277848) node {4};
\draw [fill=black] (8.4396926207859084,2.4936207664831853) circle (2.pt);
\draw[color=black] (8.657017697435771,2.752618707341471) node {5};
\draw [fill=black] (7.5,2.8356409098088537) circle (2.pt);
\draw[color=black] (7.5,3.173738709727311) node {6};
\draw [fill=black] (7-0.43969262078590776,2.493620766483186) circle (2.pt);
\draw[color=black] (7-0.6570176974357707,2.7526187073414725) node {7};
\draw [fill=black] (7-0.939692620785908,1.6275953626987478) circle (2.pt);
\draw[color=black] (7-1.2726539554219745,1.6863054295277862) node {8};
\draw [fill=black] (7-0.7660444431189781,0.6427876096865397) circle (2.pt);
\draw[color=black] (7-1.0588457268119897,0.4737387097273116) node {9};
\end{scriptsize}
\end{tikzpicture}
\caption{Circular representations of a noncrossing and crossing partition of $\{1,2, \ldots, 9\}.$}
\end{figure}
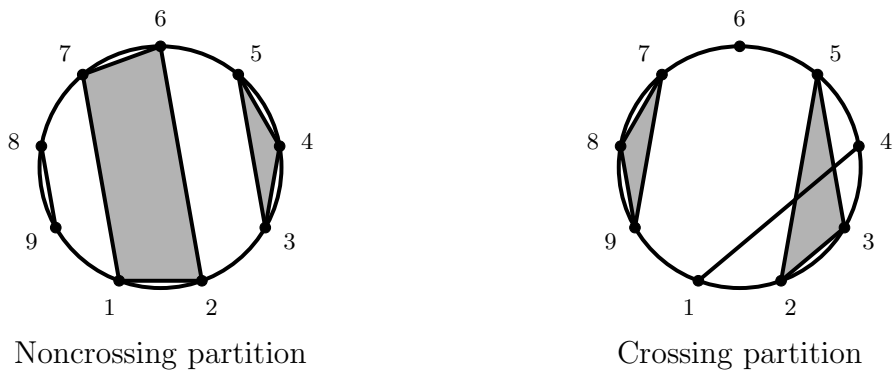

The proof of the following result is similar to \cite{matrix}*{Step 3 of Proof of Theorem 6.2}. The main difference arises from the fact that we only consider noncrossing partitions.

\begin{lemma}
\label{noncrossingsum}
For a positive integer $k$,
\[
    \sum_{\substack{\pi\in NC(k+1), \\ \pi = B_1\sqcup\cdots\sqcup B_m}} b_{|B_1|}\prod_{i=2}^m a_{|B_i|} = \sum_{\substack{\pi\in NC(k), \\ \pi = B_1\sqcup\cdots\sqcup B_m}} \left( b_{|B_1|+1}+\sum_{j=1}^{|B_1|} a_jb_{|B_1|+1-j}\right)\prod_{i=2}^m a_{|B_i|}.
\]
\end{lemma}
\begin{proof}
Suppose that for $k\geq 1$, $NC'(k)=NC(\{1, 3, 4, \ldots, k+1\})$. For all $\pi\in NC'(k)$, define $S(\pi)$ to be the set of partitions $\lambda$ in $NC(k+1)$ such that $\lambda$ is $\pi$ with $2$ added to $B_1$ or $\lambda$ is $\pi$ with $B_1$ replaced by $B_1'$ and $B_1''$, where $B_1'$ is the $j-1$ largest elements of $B_1$ with $1$ added, $B_1''$ is the remaining elements of $B_1$ with $2$ added, and $1\leq j\leq |B_1|$.

\begin{claim}
\label{claim:disjointunion}
The sets $S(\pi)$ are disjoint for $\pi\in NC'(k)$ and the union of these sets is $NC(k+1)$.
\end{claim}

\begin{proof}
First, we must prove that if $\pi\in NC'(k)$, then $S(\pi)\subset NC(k+1)$. It is clear that $\lambda\in S(\pi)$ that is $\pi$ with $2$ added to $B_1$ is in $NC(k+1)$. Suppose $\lambda\in S(\pi)$ is $\pi$ with $B_1$ replaced by $B_1'$ and $B_1''$, where $B_1'$ and $B_1''$ are defined as mentioned previously. The noncrossing condition for $\lambda$ is clearly satisfied when the two distinct blocks are $B_1'$ and $B_1''$. When the two distinct blocks are both not $B_1'$ and $B_1''$, the noncrossing condition is also satisfied since $\pi$ is noncrossing. When one block is $B_1'$ and the other is not $B_1''$, the noncrossing condition is satisfied because $B_1'\subset B_1$ and $\pi$ is noncrossing. On the other hand, when one block is $B_1''$ and the other is not $B_1'$, the noncrossing condition is satisfied because $B_1''$ is a subset of $B_1\cup\{2\}\backslash\{1\}$ and $\pi$ is noncrossing.

For $\lambda\in NC(k+1)$, suppose the decomposition into blocks is $\lambda = C_1\sqcup C_2\sqcup\cdots\sqcup C_m$, such that the smallest element of $C_{i+1}$ is greater than the smallest element of $C_i$ for $1\leq i\leq m-1$. Note that $2\in C_1$ or $2\in C_2$.

If $2\in C_1$, let $\pi=B_1\sqcup\cdots\sqcup B_m\in NC'(k)$ have $B_i=C_i$ for $2\leq i\leq m$ and $B_1=C_1\backslash\{2\}$. We see that $\pi$ is noncrossing and adding $2$ to $B_1$ gives $\lambda$, so $\lambda\in S(\pi)$. Suppose $\lambda\in S(\pi)$, $\pi=B_1\sqcup\cdots\sqcup B_{m'}\in NC'(k)$. Since the first block of $\lambda$ contains a $2$ and the only element of $S(\pi)$ with first block containing $2$ is $\pi$ with $2$ added to $B_1$, we know that $\pi$ must be $\lambda$ with $2$ removed from $C_1$. So, $\pi$ is unique, and $\lambda$ is not in $S(\pi)$ for more than one $\pi\in NC'(k)$.

Suppose that $2\in C_2$. Then, let $\pi=B_1\sqcup\cdots\sqcup B_{m-1}\in NC'(k)$ have $B_1=\left(C_1\cup C_2\right)\backslash\{2\}$ and $B_{i-1}=C_i$ for $3\leq i\leq m$.
Since $\lambda$ is noncrossing, $\pi$ is also noncrossing. The argument for this is as follows. It is clear that the noncrossing condition between $B_i$ and $B_j$ is satisfied when $2\leq i<j\leq m-1$. It suffices to prove that the noncrossing condition between $B_1=(C_1\cup C_2)\backslash\{2\}$ and $B_i=C_{i+1}$ is satisfied for $2\leq i\leq m$. If there exists $b\in B_1$ and $c,d\in B_i$ such that $c<b<d$, then $1<c<b<d$ and $2<c<b<d$, which contradictions the noncrossing condition between either $C_1$ and $C_{i+1}$ or $C_2$ and $C_{i+1}$. Hence, $\pi$ is noncrossing.

Note that $1\in C_1$ and $2\in C_2$. Suppose $j_1\in C_1$ and $j_2\in C_2$, where $j_1, j_2>2$. Because $\lambda$ is noncrossing, we must have that $j_1>j_2.$ Therefore, the elements of $C_1$ which are not $1$ are greater than all of the elements of $C_2$. We see that, where $1\leq j=|C_1|\leq |B_1|$, $C_1$ is the largest $j-1$ elements of $B_1$ with $1$ added, and $C_2$ is the remaining elements of $B_1$ with $2$ added. Therefore, $\lambda\in S(\pi)$.

Assume $\lambda\in S(\pi)$, where $\pi=B_1\sqcup\cdots\sqcup B_{m'}\in NC'(k)$. Since $\lambda$ does not have $2$ in the first block, $\lambda$ is $\pi$ with $B_1'$ and $B_1''$ instead of $B_1$, where $B_1'$ is the last $j-1$ elements of $B_1$ with $1$ added, and $B_1''$ is the remaining elements of $B_1$ with $2$ added, for some $1\leq j\leq |B_1|$. Then, the first and second block of $\lambda$ would be $C_1=B_1'$ and $C_2=B_1''$, respectively. But, in $\pi$, $B_1=(B_1'\cup B_1'')/\{2\} = (C_1\cup C_2)/\{2\}$ and $B_i=C_{i+1}$ for $2\leq i\leq m-1$, where $m'=m-1$. Thus, $\pi$ is unique.
\end{proof}

With \Cref{claim:disjointunion},
\begin{align*}
    \sum_{\substack{\pi\in NC(k+1), \\ \pi=B_1\sqcup\cdots\sqcup B_m}} b_{|B_1|}\cdot\prod_{i=2}^m a_{|B_i|} & =\sum_{\pi\in NC'(k)} \sum_{\substack{\lambda\in S(\pi), \\ \lambda=B_1\sqcup\cdots\sqcup B_m}} b_{|B_1|}\cdot\prod_{i=2}^m a_{|B_i|} \\
    & = \sum_{\substack{\pi\in NC'(k), \\ \pi=B_1\sqcup\cdots\sqcup B_m}} \left(b_{|B_1|+1}+\sum_{j=1}^{|B_1|} a_jb_{|B_1|+1-j}\right)\cdot \prod_{i=2}^m a_{|B_i|},
\end{align*}
giving \Cref{noncrossingsum} since $|\{1, 3, 4, \ldots, k+1\}|=|\{1,2,3,\ldots, k\}|=k$. We are done.
\end{proof}

\begin{definition}
\label{def:cumulants}
For a sequence of coefficients $s=\{c^{d,\nu}\}_{d\geq 0, \nu\in P}$ and a positive integer $k$, let the $\textit{free cumulant}$ of order $k$ of $s$ be
\[
    c_k(s) = \theta^{k-1}\sum_{\substack{\nu\in P, d\geq 0,\\ |\nu|+d=k-1}} (-1)^{\ell(\nu)}P(\nu)c^{d, \nu}.
\]
\end{definition}

The free cumulants appear in \pref{eq:limitcumulants} and these free cumulants will be used in \Cref{sec:main} and \Cref{sec:measures} to obtain the free cumulants corresponding to the moments $m_k$ in \Cref{lln} that we mentioned previously. Furthermore, \Cref{thm:equivalence} is based on the asymptotics of the free cumulants.

\begin{theorem}
\label{value}
Let $i$ and $k$ be positive integers. Suppose $\{f_{j,N}(x_1, \ldots, x_N)\}_{N\geq i}$ for $1\leq j\leq k-1$ and $\{g_N(x_1,\ldots, x_N)\}_{N\geq i}$ are sequences of formal series which are symmetric outside of $i$. For $1\leq j\leq k-1$, assume that $f_{j,N}(x_1,\ldots,x_N)$ has limiting sequence $f$ outside of $i$. Also, suppose $g_N(x_1,\ldots, x_N)$ has limiting sequence $g$ outside of $i$. Then, 
\begin{equation}
\label{eq:limitcumulants}
   \lim_{N\rightarrow\infty}\left([1]\prod_{j=1}^{k-1}\mathcal{Q}_i^N(f_{j,N})g_N(x_1, \ldots, x_N)\right) = \sum_{\substack{\pi\in NC(k), \\ \pi = B_1\sqcup\cdots\sqcup B_m}} c_{|B_1|}(g)\prod_{l=2}^m c_{|B_l|}(f).
\end{equation}
\end{theorem}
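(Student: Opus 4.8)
The plan is to pass to the limiting sequences, which turns \pref{eq:limitcumulants} into a purely combinatorial statement, and then to prove it by induction on $k$ using one lemma about how free cumulants transform under the operator. Since all the $f_{j,N}$ share the limiting sequence $f$, \Cref{limit} lets us replace each $f_{j,N}$ by $F_i(f)(x_1,\ldots,x_N)$ and $g_N$ by $F_i(g)(x_1,\ldots,x_N)$ without changing the left side; iterating \Cref{limitexists} (using $m=1$ and reading off \pref{eq:limitoperator}, the $\tfrac{\theta}{N}$ term of \Cref{operatorpoly} dropping out), the limiting sequence of $\prod_{j=1}^{k-1}\mathcal{Q}_i^N(f_{j,N})g_N$ is $(\mathcal{Q}^\infty_f)^{k-1}g$, where $\mathcal{Q}^\infty_f$ acts on a sequence $s=\{c^{d,\nu}\}$ by
\[
(\mathcal{Q}^\infty_f s)^{d,\nu} = \theta c^{d+1,\nu} - \theta c^{0,\nu+(d+1)} + \sum_{\substack{a+b=d,\\ (p_1,p_2)\in S(\nu)}} c_f^{a,\pi(p_1)}\, c^{b,\pi(p_2)}.
\]
Because $[1]F_i(s)=c^{0,0}$ and $c_1(s)=c^{0,0}$ for any sequence by \pref{eq:cumulants}, the left side of \pref{eq:limitcumulants} equals $\big((\mathcal{Q}^\infty_f)^{k-1}g\big)^{0,0}=c_1\big((\mathcal{Q}^\infty_f)^{k-1}g\big)$. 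Thus it suffices to prove, for all sequences $f,g$ and all $k\geq 1$, that $\big((\mathcal{Q}^\infty_f)^{k-1}g\big)^{0,0}$ equals the right side of \pref{eq:limitcumulants}; I would induct on $k$, the base case $k=1$ being immediate since $NC(1)$ is the single one-block partition and the left side is then $c_1(g)$.

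The crux is the lemma: for every sequence $g$ and every $j\geq 1$,
\[
c_j(\mathcal{Q}^\infty_f g) = \sum_{a=0}^{j} c_a(f)\, c_{j+1-a}(g), \qquad\text{with the convention } c_0(f)=1 .
\]
To prove it, substitute the three summands of $(\mathcal{Q}^\infty_f g)^{d,\nu}$ into \pref{eq:cumulants}. The first two combine, after reindexing $d+1\mapsto d$ and using that $\sum_{(\nu,d):\,\nu+(d+1)=\mu}(-1)^{\ell(\nu)}P(\nu)=-(-1)^{\ell(\mu)}P(\mu)$ for each nonempty partition $\mu$ (deleting a part of value $v$ from $\mu$ gives $P(\nu)=P(\mu)\,m_v/\ell(\mu)$, and the multiplicities $m_v$ sum to $\ell(\mu)$), into exactly $c_{j+1}(g)$, the $a=0$ term. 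Grouping the monomials $c_f^{a,\pi(p_1)}\,c_g^{b,\pi(p_2)}$ in the third summand by $(\mu_1,\mu_2)=(\pi(p_1),\pi(p_2))$ — where $|\mu_1|+a$ and $|\mu_2|+b$ necessarily sum to $j-1$, so $\theta$-powers and signs line up — the third summand equals $\sum_{a\geq 1}c_a(f)\,c_{j+1-a}(g)$ provided the identity
\[
\sum_{\nu:\ |\nu|=|\mu_1|+|\mu_2|}(-1)^{\ell(\nu)}P(\nu)\,\#\{(p_1,p_2)\in S(\nu):\ \pi(p_1)=\mu_1,\ \pi(p_2)=\mu_2\} = (-1)^{\ell(\mu_1)+\ell(\mu_2)}P(\mu_1)P(\mu_2)
\]
holds; this I would isolate as a combinatorial lemma in the appendix (each admissible $\nu$ is obtained by merging $r$ parts of $\mu_1$ with $r$ parts of $\mu_2$, and since $r$ merges flip $(-1)^{\ell(\nu)}$ by $(-1)^r$, an inclusion--exclusion cancels everything but the $r=0$ term).

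For the inductive step, $k\geq 2$, write $(\mathcal{Q}^\infty_f)^{k-1}g=(\mathcal{Q}^\infty_f)^{k-2}g'$ with $g'=\mathcal{Q}^\infty_f g$; the inductive hypothesis gives $\big((\mathcal{Q}^\infty_f)^{k-2}g'\big)^{0,0}=\sum_{\pi\in NC(k-1)}c_{|B_1|}(g')\prod_{B\neq B_1}c_{|B|}(f)$, and the lemma replaces $c_{|B_1|}(g')$ by $\sum_{a=0}^{|B_1|}c_a(f)\,c_{|B_1|+1-a}(g)$. Decomposing each noncrossing partition by its block containing the minimum together with the noncrossing sub-partitions sitting in the gaps, and putting $F(z)=\sum_{n\geq 0}z^n\sum_{\rho\in NC(n)}\prod_{B\in\rho}c_{|B|}(f)$, $C_f(w)=1+\sum_{p\geq 1}c_p(f)w^p$ and $C_g(w)=\sum_{q\geq 1}c_q(g)w^q$, this rewrites the left side as $[z^k]\,\dfrac{C_f(zF(z))\,C_g(zF(z))}{F(z)}$. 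The same min-block decomposition of $NC(n)$ yields the functional equation $F(z)=C_f(zF(z))$, so the left side equals $[z^k]\,C_g(zF(z))$, which — by decomposing $NC(k)$ the same way but weighting the min-block by $C_g$ — is exactly $\sum_{\pi\in NC(k)}c_{|B_1|}(g)\prod_{B\neq B_1}c_{|B|}(f)$, the right side of \pref{eq:limitcumulants}. (The two expansions formally differ only by $c_1(g)z$, which is precisely the separately handled $k=1$ case.)

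I expect the main obstacle to be the key lemma, and within it the displayed identity on $\sum_{\nu}(-1)^{\ell(\nu)}P(\nu)\,\#\{\ldots\}$. The reductions through \Cref{limit} and \Cref{limitexists} are routine, and the generating-function step in the induction is standard free-probability bookkeeping; but forcing the ``switch'' term of $\mathcal{Q}^\infty_f$ to collapse to a clean product of cumulants depends on the exact cancellation between the signs $(-1)^{\ell(\nu)}$ and the multinomial coefficients $P(\nu)$, which requires carefully organizing the inclusion--exclusion over the ways a partition $\nu$ can split compatibly with a pair $(\mu_1,\mu_2)$.
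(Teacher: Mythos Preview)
Your proposal is correct and follows the same architecture as the paper: reduce to the limiting sequences via \Cref{limit}, induct on $k$, and in the inductive step prove the transformation law $c_j(\mathcal{Q}^\infty_f g)=c_{j+1}(g)+\sum_{a=1}^{j}c_a(f)\,c_{j+1-a}(g)$ by splitting the three summands of $(\mathcal{Q}^\infty_f g)^{d,\nu}$ exactly as you describe. The displayed identity you isolate is precisely the paper's \pref{eq:prodseries}.

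Two local differences are worth flagging. For the inductive recursion on $NC(k)$ the paper uses the bijective \Cref{noncrossingsum} instead of your generating-function argument with $F(z)=C_f(zF(z))$; both encode the same min-block decomposition, and your version is the standard free-probability packaging. For the combinatorial identity, the paper stratifies by $\ell(\nu)=\ell(\mu_1)+\ell(\mu_2)-r$ just as you do, but the $r$th stratum carries the weight $\dfrac{\binom{\ell(\mu_1)}{r}\binom{\ell(\mu_2)}{r}}{\binom{\ell(\mu_1)+\ell(\mu_2)}{r}}\cdot\dfrac{(\ell(\mu_1)+\ell(\mu_2))!}{\ell(\mu_1)!\,\ell(\mu_2)!}\,P(\mu_1)P(\mu_2)$ (this is \Cref{claim:coeffsum}), not a bare binomial, so the alternating sum does not collapse by a one-line inclusion--exclusion; the paper closes it with \Cref{binomsum}. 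Your sketch is on the right track but you should expect to do an equivalent computation there rather than a purely formal cancellation.
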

\begin{proof}
From \Cref{limitexists}, the limit of $\prod_{j=1}^{k-1}\mathcal{Q}_i^N(f_{j,N})g_N(x_1, \ldots, x_N)$ as $N\rightarrow\infty$ outside of $i$ exists, see the definition of being symmetric outside of $i$. Thus, the left hand side of \pref{eq:limitcumulants} is
\[
[1]\lim_{N\rightarrow\infty}\prod_{j=1}^{k-1}\mathcal{Q}_i^N(f_{j,N})g_N(x_1, \ldots, x_N),
\]
To evaluate this, we use induction. For the base case $k=1$, $\lim_{N\rightarrow\infty} [1] g_N(x_1, \ldots, x_N)= c_g^{0,0} = c_1(g)$.

Next, assume that the statement is true for a positve integer $k$. We want to show the statement is true for $k+1$. From \Cref{limit}, outside of $i$,
\[\lim_{N\rightarrow\infty}\prod_{j=1}^k\mathcal{Q}_i^N(f_{j,N})g_N(x_1, \ldots, x_N)=\lim_{N\rightarrow\infty}\mathcal{Q}_i^N(F_i(f))^kF_i(g)(x_1, \ldots, x_N).\]
Using \Cref{limitexists}, let $g'$ be the limiting sequence of $\mathcal{Q}_i^N(F_i(f)) F_i(g)(x_1, \ldots, x_N)$ outside of $i$, with
\[
    c_{g'}^{d, \nu} = \theta c_g^{d+1, \nu}-\theta c_g^{0, \nu + (d+1)}+\sum_{\substack{a+b=d, \\ (p_1, p_2)\in S(\nu)}} c_f^{a, \pi(p_1)}c_g^{b, \pi(p_2)}
\]
for $d\geq 0$ and $\nu\in P$. By \Cref{limit} with 
\[
g_N(x_1,\ldots,x_N)=\mathcal{Q}_i^N(F_i(f)) F_i(g)(x_1, \ldots, x_N),
\]
we have that outside of $i$,
\[
\lim_{N\rightarrow\infty}\mathcal{Q}_i^N(F_i(f))^kF_i(g)(x_1, \ldots, x_N) = \lim_{N\rightarrow\infty}\mathcal{Q}_i^N(F_i(f))^{k-1}F_i(g')(x_1, \ldots, x_N).
\]

By the inductive hypothesis,
\begin{align*}
    & [1]\lim_{N\rightarrow\infty}\mathcal{Q}_i^N(F_i(f))^{k-1}F_i(g')(x_1, \ldots, x_N) = \sum_{\substack{\pi\in NC(k), \\ \pi = B_1\sqcup\cdots\sqcup B_m}} c_{|B_1|}(g')\prod_{l=2}^m c_{|B_l|}(f) \\
    & = \sum_{\substack{\pi\in NC(k), \\ \pi = B_1\sqcup\cdots\sqcup B_m}} \theta^{|B_1|-1}\left(\sum_{\substack{\nu\in P, d\geq 0 \\ |\nu|+d=|B_1|-1}} (-1)^{\ell(\nu)}P(\nu)c_{g'}^{d, \nu}\right)\prod_{l=2}^m c_{|B_l|}(f).
\end{align*}
Also, from \Cref{noncrossingsum},
\begin{align*}
&\sum_{\substack{\pi\in NC(k+1), \\ \pi = B_1\sqcup\cdots\sqcup B_m}} c_{|B_1|}(g)\prod_{l=2}^m c_{|B_l|}(f) \\
& = \sum_{\substack{\pi\in NC(k), \\ \pi = B_1\sqcup\cdots\sqcup B_m}} \left(c_{|B_1|+1}(g)+\sum_{j=1}^{|B_1|} c_j(f)c_{|B_1|+1-j}(g)\right) \prod_{l=2}^m c_{|B_l|}(f).
\end{align*}
Then, to prove that the statement is true for $k+1$, it suffices to show that
\[
 \theta^{|B_1|-1}\sum_{\substack{\nu\in P, d\geq 0 \\ |\nu|+d=|B_1|-1}} (-1)^{\ell(\nu)}P(\nu)c_{g'}^{d, \nu} = c_{|B_1|+1}(g)+\sum_{j=1}^{|B_1|} c_j(f)c_{|B_1|+1-j}(g).
\]
The left hand side is
\begin{align*}
    & \theta^{|B_1|}\sum_{\substack{\nu\in P, d\geq 0 \\ |\nu|+d=|B_1|-1}}(-1)^{\ell(\nu)}P(\nu)c_g^{d+1, \nu}-\theta^{|B_1|}\sum_{\substack{\nu\in P, d\geq 0 \\ |\nu|+d=|B_1|-1}}(-1)^{\ell(\nu)}P(\nu)c_g^{0, \nu + (d+1)} \\
    & + \theta^{|B_1|-1}\sum_{\substack{\nu\in P, d\geq 0 \\ |\nu|+d=|B_1|-1}}\sum_{\substack{a+b=d, \\ (p_1, p_2)\in S(\nu)}} (-1)^{\ell(\nu)}P(\nu)c_f^{a, \pi(p_1)}c_g^{b, \pi(p_2)}.
\end{align*}
However, the right hand side is
\begin{align*}
& \theta^{|B_1|}\sum_{\substack{\nu\in P, d\geq 0,\\ |\nu|+d=|B_1|}} (-1)^{\ell(\nu)}P(\nu)c_g^{d, \nu}\\
& +\theta^{|B_1|-1}\sum_{j=1}^{|B_1|}\left(\sum_{\substack{\nu_1\in P, d_1\geq 0,\\ |\nu_1|+d_1=j-1}} (-1)^{\ell(\nu_1)}P(\nu_1)c_f^{d_1, \nu_1}\right)\left(\sum_{\substack{\nu_2\in P, d_2\geq 0,\\ |\nu_2|+d_2=|B_1|-j}} (-1)^{\ell(\nu_2)}P(\nu_2)c_g^{d_2, \nu_2}\right).
\end{align*}

Observe that
\[
    \sum_{\substack{\nu\in P, d\geq 0,\\ |\nu|+d=|B_1|}} (-1)^{\ell(\nu)}P(\nu)c_g^{d, \nu} 
    = \sum_{\substack{\nu\in P, d\geq 0, \\ |\nu|+d=|B_1|-1}}(-1)^{\ell(\nu)}P(\nu)c_g^{d+1, \nu} + \sum_{\nu\in P, |\nu|=|B_1|} (-1)^{\ell(\nu)}P(\nu)c_g^{0, \nu}.
\]
Here, we want to show that
\[
\sum_{\nu\in P,|\nu|=|B_1|} (-1)^{\ell(\nu)}P(\nu)c_g^{0, \nu} = \sum_{\substack{\nu\in P, d\geq 0, \\ |\nu|+d=|B_1|-1}}(-1)^{\ell(\nu)+1}P(\nu)c_g^{0, \nu + (d+1)}.
\]
We look at the coefficient of $c_g^{0, \nu}$ with $|\nu|=|B_1|\geq 1$. If $\nu=(a_1\geq\cdots\geq a_m)$ and $R(\nu)$ is the set $\{a_i| 1\leq i\leq m\}$, the coefficient on the right hand side is
\[(-1)^{\ell(\nu)}\sum_{i\in R(\nu)} P(\nu_i),\]
with $\nu_i+(i)=\nu$ for all $i\in R(\nu)$. To compute $P(\nu)$, in a permutation of $\nu$, the first integer must be an element $i$ of $R(\nu)$. Then, there are $P(\nu_i)$ permutations of the remaining components of $\nu$, and therefore, for $\nu$ with $|\nu|\geq 1$,
\begin{equation}
\label{eq:partitionsplit}
    P(\nu)=\sum_{i\in R(\nu)} P(\nu_i).
\end{equation}
With this, the coefficients on the left and right hand sides are equal.

In order to complete the proof, it suffices to show that
\begin{equation}
\label{eq:prodseries}
    \begin{split}
        & \sum_{\substack{\nu\in P, d\geq 0, \\ |\nu|+d=|B_1|-1}}\sum_{\substack{a+b=d, \\ (p_1, p_2)\in S(\nu)}} (-1)^{\ell(\nu)}P(\nu)c_f^{a, \pi(p_1)}c_g^{b, \pi(p_2)} \\
        & = \sum_{j=1}^{|B_1|}\left(\sum_{\substack{\nu_1\in P, d_1\geq 0,\\ |\nu_1|+d_1=j-1}} (-1)^{\ell(\nu_1)}P(\nu_1)c_f^{d_1, \nu_1}\right)\left(\sum_{\substack{\nu_2\in P, d_2\geq 0,\\ |\nu_2|+d_2=|B_1|-j}} (-1)^{\ell(\nu_2)}P(\nu_2)c_g^{d_2, \nu_2}\right).
    \end{split}
\end{equation}
We find the coefficient of $c_f^{d_1,\nu_1}c_g^{d_2,\nu_2}$ on both sides, where $d_1+d_2+|\nu_1|+|\nu_2|=|B_1|-1$. On the right hand side, the coefficient is
$(-1)^{\ell(\nu_1)+\ell(\nu_2)}P(\nu_1)P(\nu_2)$. For $\nu$ with $|\nu|=|B_1|-d_1-d_2-1$, let $T(\nu)=\{(p_1, p_2)\in S(\nu) | \pi(p_1)=\nu_1, \pi(p_2)=\nu_2\}$. Then, on the left hand side, the coefficient is, where $d=d_1+d_2$,
\[
    \sum_{\substack{\nu\in P,\\ |\nu|=|B_1|-d-1}} \sum_{\substack{(p_1, p_2)\in S(\nu), \\ \pi(p_1)=\nu_1, \pi(p_2)=\nu_2}} (-1)^{\ell(\nu)}P(\nu) = \sum_{\substack{\nu\in P,\\ |\nu|=|B_1|-d-1}} (-1)^{\ell(\nu)}|T(\nu)| P(\nu)
\]
Suppose $|T(\nu)|\not=0$. Then, if $\ell(\nu)=\ell(\nu_1)+\ell(\nu_2)-k$, $0\leq k\leq \min(\ell(\nu_1), \ell(\nu_2))$. 

\begin{claim}
\label{claim:coeffsum}
For $0\leq k\leq \min(\ell(\nu_1), \ell(\nu_2))$,
\[
\sum_{\substack{\nu\in P, |\nu|=|B_1|-d-1, \\ \ell(\nu)=\ell(\nu_1)+\ell(\nu_2)-k}} |T(\nu)| P(\nu)=\frac{\binom{\ell(\nu_1)}{k}\binom{\ell(\nu_2)}{k}}{\binom{\ell(\nu_1)+\ell(\nu_2)}{k}}\cdot\frac{(\ell(\nu_1)+\ell(\nu_2))!}{\ell(\nu_1)!\ell(\nu_2)!}P(\nu_1)P(\nu_2).
\]
\end{claim}
\begin{proof}
Let $l=\ell(\nu_1)+\ell(\nu_2)-k$ and $\vec{x}=(x_1,\ldots,x_l)$. Suppose partition $\nu$ has $|\nu|=|B_1|-d-1$ and $\ell(\nu)=l$. We see that the coefficient of any term of $M_\nu(\vec{x})$ in $M_{\nu_1}(\vec{x})M_{\nu_2}(\vec{x})$ is $|T(\nu)|$. Therefore, the sum of the coefficients in $M_{\nu_1}(\vec{x})M_{\nu_2}(\vec{x})$ of terms of $M_\nu(\vec{x})$ is $|T(\nu)| P(\nu)$, where there are $P(\nu)$ terms of $M_\nu(\vec{x})$. 

Suppose that $S$ is the sum of the terms of $M_{\nu_1}(\vec{x})M_{\nu_2}(\vec{x})$ which contain all of $x_1,\ldots,x_l$. If a term $p$ of $S$ contains all of the $x_i$, $p$ without its coefficient must be a term of $M_\nu(\vec{x})$ for some $\nu\in P$ with $|\nu|=|B_1|-d-1$ and $\ell(\nu)=l$. But, the sum of the coefficients in $M_{\nu_1}(\vec{x})M_{\nu_2}(\vec{x})$ of terms of $M_\nu(\vec{x})$ is $|T(\nu)| P(\nu)$. Therefore, the sum of the coefficients of $S$ is
\[
C=\sum_{\substack{\nu\in P,|\nu|=|B_1|-d-1, \\ \ell(\nu)=\ell(\nu_1)+\ell(\nu_2)-k}} |T(\nu)|P(\nu).
\]
However, for terms $p_i$ in $M_{\nu_i}(\vec{x})$, $1\leq i\leq 2$, if $p_1p_2$ contains all of the $x_i$, then exactly $k$ of the $x_i$ must be in both $p_1$ and $p_2$. Note that $S$ is the sum of $p_1p_2$ for such $(p_1,p_2)$, and $C$ is the number of $(p_1,p_2)$; we count $C$. There are
\[
\frac{(\ell(\nu_1)+\ell(\nu_2)-k)!}{k!(\ell(\nu_1)-k)!(\ell(\nu_2)-k)!}=\frac{\binom{\ell(\nu_1)}{k}\binom{\ell(\nu_2)}{k}}{\binom{\ell(\nu_1)+\ell(\nu_2)}{k}}\cdot\frac{(\ell(\nu_1)+\ell(\nu_2))!}{\ell(\nu_1)!\ell(\nu_2)!}
\]
ways to choose the $x_i$ for $p_1$ and $p_2$. For each choice of the $x_i$, there are $P(\nu_1)$ and $P(\nu_2)$ choices for $p_1$ and $p_2$, respectively. Thus,
\[
\sum_{\substack{\nu\in P, |\nu|=|B_1|-d-1, \\ \ell(\nu)=\ell(\nu_1)+\ell(\nu_2)-k}} |T(\nu)| P(\nu)=C=\frac{\binom{\ell(\nu_1)}{k}\binom{\ell(\nu_2)}{k}}{\binom{\ell(\nu_1)+\ell(\nu_2)}{k}}\cdot\frac{(\ell(\nu_1)+\ell(\nu_2))!}{\ell(\nu_1)!\ell(\nu_2)!}\cdot P(\nu_1)P(\nu_2).
\]
\end{proof}

\begin{claim}
\label{binomsum}
For all nonnegative integers $a$, $b$, and $m$ such that $m\leq a$,
\[
\sum_{k=m}^a\frac{\binom{a}{k}\binom{b}{k-m}}{\binom{a+b}{k}}(-1)^k=\frac{(-1)^ma!b!}{(a+b)!}.
\]
\end{claim}
\begin{proof}
Fix the value of $a$. We use induction on $m$, from $m=a$ to $0$. The $m=a$ base case is clear. Suppose the statement holds for $m+1$, $0\leq m< a$. We show the statement holds for $m$ with induction on $b$, where the base case $b=0$ is clear. For the inductive step, assume the statement for $m$, $a$, and $b\geq 0$ holds. We want to show the statement for $m$, $a$, and $b+1$ holds. Note that
\begin{align*}
& \sum_{k=m}^a\frac{\binom{a}{k}\binom{b+1}{k-m}}{\binom{a+b+1}{k}}(-1)^k = \sum_{k=m}^a\frac{\binom{a}{k}\binom{b}{k-m}}{\binom{a+b}{k}}\cdot\frac{(b+1)(a+b+1-k)}{(a+b+1)(b+1-k+m)}\cdot (-1)^k \\
& = \frac{b+1}{a+b+1}\left(\sum_{k=m}^a\frac{\binom{a}{k}\binom{b}{k-m}}{\binom{a+b}{k}}\left(1+\frac{a-m}{b+1-k+m}\right)(-1)^k\right) \\
& = \frac{(-1)^ma!(b+1)!}{(a+b+1)!}+\sum_{k=m}^a\frac{\binom{a}{k}\binom{b}{k-m}}{\binom{a+b}{k}}\cdot\frac{(a-m)(-1)^k}{b+1-k+m},
\\
& = \frac{(-1)^ma!(b+1)!}{(a+b+1)!}+\frac{a-m}{b+1}\cdot \sum_{k=m}^a\frac{\binom{a}{k}\binom{b+1}{k-m}}{\binom{a+b}{k}}(-1)^k \\
& = \frac{(-1)^ma!(b+1)!}{(a+b+1)!} +\frac{a-m}{b+1}\cdot\left(\sum_{k=m}^a\frac{\binom{a}{k}\binom{b}{k-m}}{\binom{a+b}{k}}(-1)^k + \sum_{k=m+1}^a\frac{\binom{a}{k}\binom{b}{k-m-1}}{\binom{a+b}{k}}(-1)^k\right) \\
& = \frac{(-1)^ma!(b+1)!}{(a+b+1)!}
\end{align*}
after applying the inductive hypotheses, completing the proof.
\end{proof}

Afterwards, the coefficient of $c_f^{d_1,\nu_1}c_g^{d_2,\nu_2}$ on the left hand side of \pref{eq:prodseries} is, with \Cref{claim:coeffsum} and \Cref{binomsum},
\begin{align*}
    & \sum_{\substack{\nu\in P, \\ |\nu|=|B_1|-d-1}}(-1)^{\ell(\nu)}|T(\nu)|P(\nu) \\ 
    & = \sum_{k=0}^{\min(\ell(\nu_1), \ell(\nu_2))}(-1)^{\ell(\nu_1)+\ell(\nu_2)-k}\sum_{\substack{\nu\in P, |\nu|=|B_1|-d-1, \\ \ell(\nu)=\ell(\nu_1)+\ell(\nu_2)-k}} |T(\nu)| P(\nu)
    = (-1)^{\ell(\nu_1)+\ell(\nu_2)}P(\nu_1)P(\nu_2).
\end{align*}
We are done.
\end{proof}

\subsection{Distinct Indices}

Also, we look at the constant term after using combinations of $\mathcal{Q}_i^j$ with distinct indices $i$ in \Cref{finalvalue1}. This is important for the proof of \Cref{lln}, as will be seen in \Cref{sec:main} and \Cref{sec:measures}, and for this, we use the operator in \pref{eq:multoperators}. But, it is necessary to consider the free cumulants of $s=\{c^{d,\nu}\}_{d\geq 0, \nu\in P}$ which are symmetric in the $x_i$ beforehand. 

\begin{proposition}
\label{symmetriccumulants}
Suppose $i$ is a positive integer and $s = \{c^{d,\nu}\}_{d\geq 0, \nu\in P}$ is a sequence such that $F_i(s)$ is symmetric with respect to $(x_j)_{j\geq 1}$. Then, $c_k(s)=0$ for all integers $k\geq 2$.
\begin{proof} Since $F_i(s)$ is symmetric, for all $\lambda\in P$, there exists a constant $c^{\lambda}$ such that $c^{d,\nu}=c^{\lambda}$ for all $d, \nu$ such that $\nu+(d)=\lambda$. The free cumulant of $s$ of order $k$ is
\[
c_k(s) = \theta^{k-1}\sum_{\substack{\nu\in P, d\geq 0,\\ |\nu|+d=k-1}} (-1)^{\ell(\nu)}P(\nu)c^{d, \nu}
 = \theta^{k-1}\sum_{\lambda\in P, |\lambda| = k-1}c^\lambda\sum_{\substack{\nu\in P, d\geq 0,\\ \nu+(d)=\lambda}} (-1)^{\ell(\nu)}P(\nu),
\]
and $c_k(s)=0$ for $k\geq 2$ follows from \pref{eq:partitionsplit}.
\end{proof}
\end{proposition}

Suppose we have positive integers $i\leq j$, and $k$. For $F(x_1,\ldots,x_j), G(x_1,\ldots,x_j)\in\mathcal{F}_i^j$, let
\begin{equation}
\label{eq:multoperators}
\mathcal{R}_{i,j}^k(F(x_1,\ldots,x_j)) G(x_1,\ldots,x_j) \triangleq \left(\mathcal{Q}_i^j\left(F(x_1,\ldots,x_j)\right)^k G(x_1,\ldots,x_j)\right)\bigg|_{x_i=0}.
\end{equation}
By \Cref{limitexists}, the resulting formal series has variables $x_1,\ldots,x_{i-1},x_{i+1},\ldots,x_j$ and is symmetric.

\begin{lemma}
\label{symmetric}
Suppose $\lambda$ is a partition of length $m\geq 1$, $N\geq m$ is a positive integer, $f_i\in\mathcal{F}_{N-i+1}^{N-i+1}$ for $1\leq i\leq m$, and $g\in\mathcal{F}_N^N$. Then,
\[
\left(\prod_{i=1}^m \mathcal{R}_{N-i+1,N-i+1}^{\lambda_i}(f_i)\right)(g)
\]
is a symmetric formal series in $x_1, \ldots, x_{N-m}$.
\end{lemma}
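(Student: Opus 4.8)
The plan is an induction on the number of operators applied, carrying along the symmetry type of the intermediate formal series; no machinery beyond \Cref{operatorpoly} is needed.

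First I would isolate one elementary observation: if $h(x_1,\ldots,x_n)\in\mathcal{F}_n^n$, i.e.\ $h$ is symmetric in $x_1,\ldots,x_{n-1}$, then the specialization $h(x_1,\ldots,x_{n-1},0)$ is a formal series in $x_1,\ldots,x_{n-1}$ which is symmetric in \emph{all} of $x_1,\ldots,x_{n-1}$ --- substituting $0$ for the lone non-symmetric variable cannot spoil symmetry among the others. In particular such a series again lies in $\mathcal{F}_{n-1}^{n-1}$ (and is moreover fully symmetric). This is the mechanism by which each factor $\mathcal{R}^{\lambda_i}_{N-i+1,N-i+1}(f_i)$ simultaneously drops a variable and upgrades the symmetry.

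Next, for $0\le k\le m$ put $h_k=\left(\prod_{i=1}^{k}\mathcal{R}^{\lambda_i}_{N-i+1,N-i+1}(f_i)\right)(g)$, so that $h_0=g$ and $h_k=\mathcal{R}^{\lambda_k}_{N-k+1,N-k+1}(f_k)(h_{k-1})$. I claim that for $1\le k\le m$ the series $h_k$ is a symmetric formal series in $x_1,\ldots,x_{N-k}$; the lemma is then the case $k=m$. For the base case $k=1$: since $f_1,g\in\mathcal{F}_N^N$, \Cref{operatorpoly} gives $\mathcal{Q}_N^N(f_1)g\in\mathcal{F}_N^N$, and iterating this shows $\mathcal{Q}_N^N(f_1)^{\lambda_1}g\in\mathcal{F}_N^N$; setting $x_N=0$ and invoking the observation above, $h_1$ is symmetric in $x_1,\ldots,x_{N-1}$. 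For the inductive step, suppose $h_k$ is symmetric in $x_1,\ldots,x_{N-k}$; then in particular $h_k\in\mathcal{F}_{N-k}^{N-k}$, and since by hypothesis $f_{k+1}\in\mathcal{F}_{N-(k+1)+1}^{N-(k+1)+1}=\mathcal{F}_{N-k}^{N-k}$, \Cref{operatorpoly} applied $\lambda_{k+1}$ times gives $\mathcal{Q}_{N-k}^{N-k}(f_{k+1})^{\lambda_{k+1}}h_k\in\mathcal{F}_{N-k}^{N-k}$; specializing $x_{N-k}=0$ yields $h_{k+1}$, a symmetric series in $x_1,\ldots,x_{N-k-1}$. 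This closes the induction.

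There is no real obstacle here beyond careful bookkeeping of indices: one must check that after $i-1$ applications the surviving variables are exactly $x_1,\ldots,x_{N-i+1}$, so that the $i$-th factor acts on a series in the correct variables, that $f_i\in\mathcal{F}_{N-i+1}^{N-i+1}$ is indeed the right ambient space for $\mathcal{Q}_{N-i+1}^{N-i+1}$, and that the variable being set to $0$, namely $x_{N-i+1}$, is precisely the distinguished (non-symmetric) variable of $\mathcal{F}_{N-i+1}^{N-i+1}$, so that the ``specialization preserves symmetry'' remark applies at each stage.
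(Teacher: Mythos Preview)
Your proof is correct and follows exactly the same approach as the paper's: the paper's entire proof is the single sentence ``This follows from induction on $m$ from $m=1$ to $N$,'' and you have simply written out that induction in full detail, using \Cref{operatorpoly} to preserve membership in $\mathcal{F}_{N-k}^{N-k}$ and the evaluation $x_{N-k}=0$ to kill the distinguished variable at each step.
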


\begin{proof}
This follows from induction on $m$ from $m=1$ to $N$.
\end{proof}

\begin{lemma}
\label{limit2}
Suppose $\lambda$ is a partition of length $m\geq 1$. Let $\{f_{i,N}\}_{N\geq m}$ for $1\leq i\leq m$ and $\{g_N(x_1,\ldots,x_N)\}_{N\geq m}$ be sequences of formal series such that $f_{i,N}\in\mathcal{F}_{N-i+1}^{N-i+1}$ and $g_N\in\mathcal{F}_N^N$ for $N\geq m$. Also, suppose $\{s_{1,N-i+1}f_{i,N}\}_{N\geq m}$ for $1\leq i\leq m$ and $\{s_{1,N}g_N\}_{N\geq m}$, where $s_{1,N-i+1}f_{i,N}, s_{1,N}g_N\in\mathcal{F}_1^N$, are symmetric outside of $1$. For $N\geq m+1$, consider $\left(\prod_{i=1}^m \mathcal{R}_{N-i+1,N-i+1}^{\lambda_i}(f_{i,N})\right)(g_N)$ as a formal series in $x_1,\ldots,x_{N-m}$. For $k\geq 1$,
$\{\left(\prod_{i=1}^m \mathcal{R}_{N-i+1,N-i+1}^{\lambda_i}(f_{i,N})\right)(g_N)\}_{N\geq m+k}$ is symmetric outside of $k$.
\end{lemma}

\begin{proof}
Induction on $m$ is used. For the base case $m=1$, $\lim_{N\rightarrow\infty}\mathcal{Q}_1^N(s_{1,N}f_{1,N})^{\lambda_1}(s_{1,N}g_N)$
exists outside of $1$ from \Cref{limitexists} because $s_{1,N}f_{1,N}$ and $s_{1,N}g_N$ have limits outside of $1$. For some $i\geq 1$, consider $N\geq i+1$. From \Cref{symmetric}, $\mathcal{R}_{N,N}^{\lambda_1}(f_{1,N})(g_N)\in\mathcal{F}_i^{N-1}$. 

From \Cref{prop:commutativeswitch}, $s_{1,N}\mathcal{Q}_N^N(f_{1,N})^{\lambda_1}(g_N)= \mathcal{Q}_1^N(s_{1,N}f_{1,N})^{\lambda_1}(s_{1,N}g_N)$. With this, the coefficient $c^{d, \nu}$ of $\mathcal{Q}_N^N(f_{1,N})^{\lambda_1}(g_N)|_{x_N=0}$ as a formal series in $x_1,\ldots,x_{N-1}$ outside of $i$ will be the coefficient $c^{0,\nu+(d)}$ of $\mathcal{Q}_N^N(f_{1,N})^{\lambda_1}(g_N)$ outside of $N$, and thus the coefficient $c^{0,\nu+(d)}$ of $\mathcal{Q}_1^N(s_{1,N}f_{1,N})^{\lambda_1}(s_{1,N}g_N)$ outside of $1$. Since the limit of $\mathcal{Q}_1^N(s_{1,N}f_{1,N})^{\lambda_1}(s_{1,N}g_N)$ outside of $1$ exists, the limit of this coefficient from $N=i+1$ to $\infty$ exists. As this holds for all $d, \nu$, the base case is complete. 

Next, assume that the result holds for $m\geq 1$. We want to show that the result holds for $m+1$. For $N\geq m+1$, let
\[
h_{N-m}(x_1,\ldots,x_{N-m})=\left(\prod_{i=1}^m \mathcal{R}_{N-i+1,N-i+1}^{\lambda_i}(f_{i,N})\right)(g_N).
\]
By the inductive hypothesis and because $h_{N-m}$ is symmetric by \Cref{symmetric}, the limit of $s_{1,N-m}h_{N-m}=h_{N-m}$ from $N-m=1$ to $\infty$ exists outside of $1$. Then,
\[
\left(\prod_{i=1}^{m+1} \mathcal{R}_{N-i+1,N-i+1}^{\lambda_i}(f_{i,N})\right)(g_N)=\mathcal{R}_{N-m, N-m}^{\lambda_{m+1}}(f_{m+1,N})(h_{N-m}).
\]
is the base case $m=1$ with $f_{m+1, N}$ as $f_{1,N-m}$, $h_{N-m}$ as $g_{N-m}$, and $N-m$ as $N$, where the conditions are satisfied. Therefore, from $m=1$, the limit from $N-m=i+1$ to $\infty$ of the above expression outside of $i$ exists for $i\geq 1$. This limit is from $N=i+m+1$ to $\infty$, which completes the proof.
\end{proof}

\begin{theorem}
\label{finalvalue1}
Suppose $\lambda$ is a partition of length $m\geq 1$. Let $\{f_{i,N}(x_1,\ldots,x_{N-i+1})\}_{N\geq m}$ for $1\leq i\leq m$ and $\{g_N(x_1,\ldots,x_N)\}_{N\geq m}$ be sequences of formal series such that $f_{i,N}\in\mathcal{F}_{N-i+1}^{N-i+1}$ and $g_N\in\mathcal{F}_N^N$ for $N\geq m$. Moreover, for $1\leq i\leq m$, assume that $\{s_{1,N-i+1}f_{i,N}\}_{N\geq m}$, with $s_{1, N-i+1}f_{i,N}\in\mathcal{F}_1^{N-i+1}$, is symmetric outside of $1$ with limiting sequence $f_i$. Also, assume that $\{s_{1,N}g_N\}_{N\geq m}$, with $s_{1,N}g_N\in\mathcal{F}_1^N$, is symmetric outside of $1$ with limiting sequence $g$. Then,
\begin{align*}
& \lim_{N\rightarrow\infty}[1]\left(\prod_{i=1}^m \mathcal{R}_{N-i+1,N-i+1}^{\lambda_i}(f_{i,N})\right)(g_N)\\
& = \left(\sum_{\substack{\pi\in NC(\lambda_1+1), \\ \pi = B_1\sqcup\cdots\sqcup B_{\ell(\pi)}}}c_{|B_1|}(g)\prod_{j=2}^{\ell(\pi)} c_{|B_j|}(f_1)\right) \prod_{i=2}^m\left(\sum_{\substack{\pi\in NC(\lambda_i), \\ \pi = B_1\sqcup\cdots\sqcup B_{\ell(\pi)}}}\prod_{j=1}^{\ell(\pi)} c_{|B_j|}(f_i)\right).
\end{align*}
\end{theorem}
\begin{proof} We prove this with induction on $m$. For the base case $m=1$, use \Cref{value}. Suppose the result holds for $m\geq 1$. We want to show it holds for $m+1$.

For $N\geq m+1$, let the symmetric formal series $h_{N-m}(x_1,\ldots,x_{N-m})$ be
\[
h_{N-m}(x_1,\ldots,x_{N-m})=\left(\prod_{i=1}^m\mathcal{R}_{N-i+1,N-i+1}^{\lambda_i}(f_{i,N})\right)(g_N).
\]
Then, $s_{1,N-m}\mathcal{Q}_{N-m}^{N-m}\left(f_{m+1,N}\right)^{\lambda_{m+1}}h_{N-m}$ is $\mathcal{Q}_1^{N-m}\left(s_{1,N-m}f_{m+1,N}\right)^{\lambda_{m+1}}h_{N-m}$ from \Cref{prop:commutativeswitch}. Since the switch does not change the constant,
\begin{align*}
& [1]\mathcal{R}_{N-m,N-m}^{N, \lambda_{m+1}}(f_{m+1,N})\left(\prod_{i=1}^m \mathcal{R}_{N-i+1,N-i+1}^{\lambda_i}(f_{i,N})\right)(g_N) = [1]\mathcal{Q}_{N-m}^{N-m}\left(f_{m+1,N}\right)^{\lambda_{m+1}}h_{N-m} \\& = [1]\mathcal{Q}_1^{N-m}\left(s_{1,N-m}f_{m+1,N}\right)^{\lambda_{m+1}}h_{N-m}.
\end{align*}
The limit of this as $N\rightarrow\infty$ is computed. With $f_{N-m}=s_{1,N-m}f_{m+1,N}$ for $N\geq m+1$, $f_N$ has $N\rightarrow\infty$ limit $f_{m+1}$ outside of $1$, and from \Cref{limit2}, $h_N$ has a limit outside of $1$; let $h$ be the limiting sequence of $\{h_N\}_{N\geq 1}$. Then, from \Cref{value} with $N-m$ for $N$,
\[
\lim_{N\rightarrow\infty} [1]\mathcal{Q}_1^{N-m}(f_{N-m})^{\lambda_{m+1}} h_{N-m}= \sum_{\substack{\pi\in NC(\lambda_{m+1}+1), \\ \pi = B_1\sqcup\cdots\sqcup B_{\ell(\pi)}}} c_{|B_1|}(h)\prod_{j=2}^{\ell(\pi)} c_{|B_j|}(f_{m+1}).
\]
But, as $h_N$ is symmetric in $x_i$, $1\leq i\leq N$ for $N\geq 1$, $h$ is symmetric. Then, by \Cref{symmetriccumulants}, $c_k(h)=0$, $k\geq 2$, and $c_1(h)=c_h^{0,0}$. From this,
\begin{align*}
& \lim_{N\rightarrow\infty} [1]\mathcal{Q}_1^{N-m}(f_{N-m})^{\lambda_{m+1}} h_{N-m} = c_h^{0,0}\left(\sum_{\substack{\pi\in NC(\lambda_{m+1}),\\ \pi = B_1\sqcup\cdots\sqcup B_{\ell(\pi)}}} \prod_{j=1}^{\ell(\pi)} c_{|B_j|}(f_{m+1})\right) \\
& = \left(\sum_{\substack{\pi\in NC(\lambda_1+1), \\ \pi = B_1\sqcup\cdots\sqcup B_{\ell(\pi)}}}c_{|B_1|}(g)\prod_{j=2}^{\ell(\pi)} c_{|B_j|}(f_1)\right) \prod_{i=2}^{m+1}\left(\sum_{\substack{\pi\in NC(\lambda_i), \\ \pi = B_1\sqcup\cdots\sqcup B_{\ell(\pi)}}} \prod_{j=1}^{\ell(\pi)} c_{|B_j|}(f_i)\right)
\end{align*}
using the inductive hypothesis, as needed.
\end{proof}

\section{Sequences of Operators}
\label{sec:main}

\subsection{Setup}
\begin{definition}
\label{def:operators}
For $\theta\in\mathbb{C}$, the \textit{Dunkl operators} are, for $1\leq i\leq N$,
\[
\mathcal{D}^\theta_i:=\frac{\partial}{\partial x_i}+\theta\sum_{\substack{1\leq j\leq N, \\ j\not=i}}\frac{1}{x_i-x_j}(1-s_{i,j}).
\]
\end{definition}

For positive integers $k$, let
\[
\mathcal{P}^\theta_k=\sum_{i=1}^N(\mathcal{D}_i^\theta)^k,
\]
and for $1\leq i\leq N$, denote $\frac{\partial}{\partial x_i}$ by $\partial_i$. From \cite{dunkloperators}*{Theorem 1.9}, the Dunkl operators are commutative, and because of this the $\mathcal{P}^\theta_k$ are commutative as well.

Suppose indices $\mathfrak{r}=\{i_j\}_{1\leq j\leq k}$, a positive integer $N\geq\max(\mathfrak{r})$, and a constant $\theta$ are given. For a symmetric formal series $F(x_1,\ldots,x_N)$, let
\begin{equation}
\label{eq:sequenceoperators}
\mathcal{D}^\theta_\mathfrak{r}(F(x_1,\ldots,x_N)) = \left(\prod_{j=1}^k\left(\mathcal{D}^\theta_{i_j}+\frac{\partial}{\partial x_{i_j}} F(x_1,\ldots,x_N)\right)\right)(1).
\end{equation}
For $\lambda\in P$, suppose the set $I_N(\lambda)$ consists of indices $l$ of length $|\lambda|$ such that there exists $i_j\in [N]$ for $1\leq j\leq \ell(\lambda)$ such that in $l$, the first $\lambda_1$ indices are $i_1$, the next $\lambda_2$ indices are $i_2$, and so forth until the last $\lambda_{\ell(\lambda)}$ indices are $i_{\ell(\lambda)}$. 

The main result of \Cref{sec:main} is \Cref{lln:generalization} below. This is used to prove \Cref{lln}, see \Cref{subsec:mainproof}. Note that \Cref{lln} is a generalization of Claim 9.1 of \cite{matrix} and some techniques from Section 5 of that paper are used in the proof of \Cref{lln:generalization} in this section.

\begin{theorem}
\label{lln:generalization} 
Suppose $\theta\in\mathbb{C}$ and $c$ is a real number with $c<1$. Let $\{\theta_N\}_{N\geq 1}$ be a sequence of complex numbers such that $\lim_{N\rightarrow\infty}N^c\theta_N=\theta$. Suppose $\{F_N(x_1,\ldots,x_N)\}_{N\geq 1}$ is a sequence of symmetric formal series. Assume that for all $\nu\in P^+$, a complex number $c_\nu$ exists such that
\[
    \displaystyle\lim_{N\rightarrow\infty}\frac{1}{N^{1-c}}[1]\frac{\partial}{\partial x_{i_1}}\cdots\frac{\partial}{\partial x_{i_r}} F_N(x_1,\ldots,x_N) = \frac{|\nu|!c_\nu}{P(\nu)}
\]
for all positive integers $i_1, \ldots, i_r$ such that $\sigma((i_1,\ldots,i_r))=\nu$. Then, for all $\lambda\in P^+$,

\begin{align*}
    &\lim_{N\rightarrow\infty} \left(\frac{1}{N^{\ell(\lambda)+|\lambda|(1-c)}}\sum_{l\in I_N(\lambda)}[1]\mathcal{D}^{\theta_N}_l(F_N(x_1,\ldots,x_N))\right) \\
    & =  \prod_{i=1}^{\ell(\lambda)}\left(\sum_{\pi\in NC(\lambda_i)} \prod_{B\in\pi}\theta^{|B|-1}\left(\sum_{\nu\in P, |\nu|=|B|}(-1)^{\ell(\nu)-1} \frac{|\nu|P(\nu)}{\ell(\nu)}c_{\nu}\right)\right).
\end{align*}
\end{theorem}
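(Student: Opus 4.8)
The plan is to rewrite $\mathcal{D}_{\mathfrak r}$ in terms of the operators $\mathcal{Q}_i^N$ and $\mathcal{R}_{i,j}^k$ of \Cref{sec:formal} and then invoke \Cref{finalvalue1} with the innermost series taken to be the constant $1$. The starting point is the conjugation identity: since $F_N$ is symmetric, $s_{i,j}(e^{F_N})=e^{F_N}$, so $\mathcal{D}_i(e^{F_N}h)=e^{F_N}\big((\mathcal{D}_i+\tfrac{\partial}{\partial x_i}F_N)h\big)$, i.e. $\mathcal{D}_i+\tfrac{\partial}{\partial x_i}F_N=e^{-F_N}\mathcal{D}_i\,e^{F_N}$; by \Cref{prop:commutative} the operators $\mathcal{D}_i+\tfrac{\partial}{\partial x_i}F_N$ commute. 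Hence for $l\in I_N(\lambda)$ built from free indices $i_1,\dots,i_m$ ($m=\ell(\lambda)$) we have $\mathcal{D}_l(F_N)=e^{-F_N}\prod_{t=1}^{m}\mathcal{D}_{i_t}^{\lambda_t}(e^{F_N})$, and one checks inductively (as in the base case of \Cref{symmetric}) that each block $(\mathcal{D}_{i_t}+\tfrac{\partial}{\partial x_{i_t}}F_N)^{\lambda_t}$ maps $\mathcal{F}_{i_t}^N$ to itself and that, applied successively for $t=m,m-1,\dots,1$ to $1$, it produces a series symmetric in the $x_k$ with $k\notin\{i_1,\dots,i_m\}$ (the only divided differences interacting with an index $i_t$ after its block has been applied are the $m-t$ "small" ones $\tfrac{1}{x_{i_{t'}}-x_{i_t}}(1-s_{i_{t'},i_t})$ with $t'<t$).

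Next I would reduce the outer sum to distinct indices. In $\sum_{l\in I_N(\lambda)}$ the free indices $i_1,\dots,i_m$ range over $\{1,\dots,N\}$; the $O(N^{m-1})$ tuples with a coincidence contribute $O(N^{m-1+|\lambda|})=o(N^{\ell(\lambda)+|\lambda|})$, since a single application of $\mathcal{D}_i+\tfrac{\partial}{\partial x_i}F_N$ raises the order in $N$ by at most one (its Dunkl part is a sum of $N-1$ switches and, by the hypothesis, its multiplication part $\tfrac{\partial}{\partial x_i}F_N$ is $O(N)$). For distinct $i_1,\dots,i_m$, permuting variables (Dunkl operators satisfy $\sigma\mathcal{D}_i\sigma^{-1}=\mathcal{D}_{\sigma(i)}$, $F_N$ is invariant, and $[1]$ is invariant) shows $[1]\mathcal{D}_l(F_N)$ equals $[1]\prod_{t=1}^{m}\big(\mathcal{D}_{N-t+1}+\tfrac{\partial}{\partial x_{N-t+1}}F_N\big)^{\lambda_t}(1)$, independent of the tuple; since there are $N(N-1)\cdots(N-m+1)\sim N^{\ell(\lambda)}$ such tuples,
\[
\lim_{N\to\infty}\frac{1}{N^{\ell(\lambda)+|\lambda|}}\sum_{l\in I_N(\lambda)}[1]\mathcal{D}_l(F_N)=\lim_{N\to\infty}\frac{1}{N^{|\lambda|}}[1]\prod_{t=1}^{m}\Big(\mathcal{D}_{N-t+1}+\tfrac{\partial}{\partial x_{N-t+1}}F_N\Big)^{\lambda_t}(1).
\]

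The heart of the argument is to identify the renormalized block $\tfrac{1}{N}\big(\mathcal{D}_{N-t+1}+\tfrac{\partial}{\partial x_{N-t+1}}F_N\big)$, acting on a series symmetric in $x_1,\dots,x_{N-t}$, with $\mathcal{Q}_{N-t+1}^{N-t+1}(f_{t,N})$ for an explicitly renormalized family $f_{t,N}\in\mathcal{F}_{N-t+1}^{N-t+1}$ built from the coefficients $c_{F_N}^\nu$ (equivalently, from the partial derivatives $\tfrac{\partial}{\partial x_{i_1}}\cdots\tfrac{\partial}{\partial x_{i_r}}F_N$), normalized so that its limiting sequence outside of $1$ is $f=\{c^{d,\nu}\}$ with $c^{d,\nu}=c_{\nu+(d+1)}$ (the partition obtained from $\nu$ by adjoining a part $d+1$). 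Using $\tfrac{1}{x_i-x_j}(1-s_{i,j})(e^{F_N}h)=e^{F_N}\tfrac{h-s_{i,j}h}{x_i-x_j}$, the Dunkl switches over the $N-t$ "bulk" indices must be matched, modulo contributions that vanish under the full renormalized limit, with $\theta\sum_j\tfrac{d_i-C_{i,j}}{N-t+1}$ (the non-leading divided-difference terms being reabsorbed into the multiplication part $f_{t,N}$); the $t-1$ small switches and the bare $\tfrac1N\tfrac{\partial}{\partial x_i}$ are of lower order; and — most delicately — since $x_{N-t+1}$ occurs only in block $t$ while the interaction of later blocks with it is exactly the content of the $C_{\,\cdot\,,N-t+1}$ terms of the corresponding $\mathcal{Q}$'s, one may set $x_{N-t+1}=0$ immediately after block $t$, turning $\mathcal{Q}_{N-t+1}^{N-t+1}(f_{t,N})^{\lambda_t}$ into $\mathcal{R}_{N-t+1,N-t+1}^{\lambda_t}(f_{t,N})$. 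Carrying this out for $t=1,\dots,m$ (with \Cref{symmetric} and \Cref{limit2} ensuring the intermediate limits exist) gives that the above limit equals $\lim_{N\to\infty}[1]\prod_{t=1}^{m}\mathcal{R}_{N-t+1,N-t+1}^{\lambda_t}(f_{t,N})(1)$.

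Finally I would apply \Cref{finalvalue1} with $g_N\equiv 1$. Its limiting sequence $g$ has $c_1(g)=c_g^{0,0}=1$ and, by \Cref{def:cumulants}, $c_k(g)=0$ for $k\ge2$, so in the first factor $\sum_{\pi\in NC(\lambda_1+1)}c_{|B_1|}(g)\prod_{i\ge2}c_{|B_i|}(f_1)$ only partitions with $|B_1|=1$ survive, and deleting that singleton puts them in bijection with $NC(\lambda_1)$; hence \pref{eq:valuemultiply1} collapses to $\prod_{t=1}^{m}\big(\sum_{\pi\in NC(\lambda_t)}\prod_{B\in\pi}c_{|B|}(f)\big)$. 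It remains to compute $c_k(f)$: from $c^{d,\nu}=c_{\nu+(d+1)}$, \Cref{def:cumulants}, and the partition identity \pref{eq:partitionsplit} $P(\mu)=\sum_{e\in R(\mu)}P(\mu_e)$ — reindexing the sum over $(d,\nu)$ with $|\nu|+d=k-1$ by $\mu=\nu+(d+1)$ and $e=d+1\in R(\mu)$ — one obtains
\[
c_k(f)=\theta^{k-1}\!\!\sum_{\substack{\nu\in P,\,d\ge0\\ |\nu|+d=k-1}}\!\!(-1)^{\ell(\nu)}P(\nu)\,c_{\nu+(d+1)}=\theta^{k-1}\!\!\sum_{\mu\in P,\,|\mu|=k}(-1)^{\ell(\mu)-1}P(\mu)\,c_\mu,
\]
which is precisely the block weight $\theta^{|B|-1}\big(\sum_{|\nu|=|B|}(-1)^{\ell(\nu)-1}P(\nu)c_\nu\big)$, so the product becomes $\prod_{i=1}^{\ell(\lambda)}\big(\sum_{\pi\in NC(\lambda_i)}\prod_{B\in\pi}\theta^{|B|-1}(\sum_{|\nu|=|B|}(-1)^{\ell(\nu)-1}P(\nu)c_\nu)\big)$, as claimed. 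The main obstacle is the third step: showing that the Dunkl block, after the $1/N$ renormalization, agrees with $\mathcal{R}_{N-t+1,N-t+1}^{\lambda_t}(f_{t,N})$ up to errors that vanish in the limit — in particular, replacing the divided-difference switches over the bulk indices by $\theta\sum_j\tfrac{d_i-C_{i,j}}{N-t+1}$ with the correct $f_{t,N}$, and justifying that one may zero out $x_{N-t+1}$ right after its block because the residual interaction with the later blocks is exactly encoded by the $C$-terms. This is where the techniques of \cite{matrix}*{Section 5} enter.
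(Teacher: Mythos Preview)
Your outline follows the paper's proof closely: reduce the sum over $I_N(\lambda)$ to a single tuple of distinct indices (this is \Cref{remainder1}), replace the Dunkl blocks by the $\mathcal{Q}/\mathcal{R}$ operators up to lower-order remainders (this is exactly \Cref{remainder3} and \Cref{remainder4}, which the paper proves by the sequence-counting machinery of \Cref{sec:main} rather than by a direct operator comparison), apply \Cref{finalvalue1} with $g_N\equiv 1$, and finish by computing $c_k(f)$. The structure is right, and you correctly identify the third step as the one requiring real work.

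There is one concrete error. You assert that the limiting sequence of $f_{t,N}$ is $c^{d,\nu}=c_{\nu+(d+1)}$. With the natural choice $f_{t,N}=\tfrac{1}{N-t+1}\partial_{N-t+1}F_N(\vec X_{N-t+1})$ (forced by matching the multiplication part of $\mathcal{Q}$ against $\tfrac1N\partial_iF_N$), the coefficient of $x_1^d M_\nu$ in $s_{1,N-t+1}f_{t,N}$ is $(d+1)c_{F_N}^{\nu+(d+1)}/(N-t+1)$, and by \pref{eq:limcoeff} this tends to
\[
c^{d,\nu}=\frac{(d+1)(\ell(\nu)+1)}{|\nu|+d+1}\,c_{\nu+(d+1)},
\]
not to $c_{\nu+(d+1)}$. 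Your identity \pref{eq:partitionsplit} then no longer suffices by itself; the paper instead uses $\ell(\mu)P(\mu_e)=N_e(\mu)P(\mu)$ together with $\sum_{e\in R(\mu)} e\,N_e(\mu)=|\mu|$ to collapse the sum. Fortunately both computations land on the same value $c_k(f)=\theta^{k-1}\sum_{|\mu|=k}(-1)^{\ell(\mu)-1}P(\mu)c_\mu$, so your final formula is correct, but the intermediate claim about $c^{d,\nu}$ is not, and your displayed computation of $c_k(f)$ is based on the wrong limiting sequence.
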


Note that for $\nu\in P^+$, if $i_1, \ldots, i_r$ are positive integers such that $\sigma((i_1,\ldots,i_r))=\nu$, then 
\[
[1]\frac{\partial}{\partial x_{i_1}}\cdots\frac{\partial}{\partial x_{i_r}} F_N(x_1,\ldots,x_N) = \frac{|\nu|! c_{F_N}^\nu}{P(\nu)}.
\]
Hence, the condition in the theorem is that $\lim_{N\rightarrow\infty} \frac{c_{F_N}^\nu}{N^{1-c}} = c_\nu$ for all $\nu\in P^+$.

\begin{remark}
In \Cref{lln:generalization}, the constant term of $F_N$ can be any value. This is because in \pref{eq:sequenceoperators}, the constant term does not impact the output of the operator. Furthermore, in contrast to the results for Bessel generating functions, $\theta$ can have negative real part.
\end{remark}

\subsection{Sequences of operators} Suppose $N$ is a positive integer. Given the sequence of variables $\{c_F^\nu\}_{\nu\in P^+}$, we define the symmetric polynomial $F$ as
\[
F(x_1,\ldots,x_N)=\sum_{\nu\in P^+, \ell(\nu)\leq N} c_F^\nu M_\nu(x_1,\ldots,x_N),
\]
so that the $c_F^\nu$ are the coefficients of $F(x_1,\ldots,x_N)$. It is clear that $\mathcal{D}_{\mathfrak{r}}(F(x_1,\ldots,x_N))$ is a polynomial in the $x_i$ with coefficients that are polynomials in the $c_F^\nu$, $\nu\in P^+$. 

When considering a polynomial in the $c_F^\nu$, $\nu\in P^+$, the degree of $c_F^\nu$ is $|\nu|$. Then, the degree of the product $\prod_{i=1}^m c_F^{\nu_i}$ is $\sum_{i=1}^m |\nu_i|$.

Furthermore, observe that $c_F^\nu$ has order $N^{1-c}$ in the context of \Cref{lln:generalization}. For a rigorous definition of orders which is derived from this idea, see \Cref{def:order}. 

\begin{definition}
\label{def:seq}
Suppose $s=\{s_j\}_{1\leq j\leq k}$ is a \textit{sequence} of $k$ operators that act on formal series over $x_1,\ldots,x_N$. For $1\leq j\leq k$, $s_j$ has an associated \textit{index} $i_j$, $1\leq i_j\leq N$ and $\mathfrak{r}=\{i_j\}_{1\leq j\leq k}$ are the $\textit{indices}$ of $s$. Also, $\theta\in\mathbb{C}$ is the $\textit{factor}$ of $s$. For $1\leq j\leq k$, $s_j$ is one of the following:
\begin{enumerate}
    \item (\textit{Derivative}) $\frac{\partial}{\partial x_{i_j}},$ denoted by $\partial_{i_j}$.
    \item (\textit{Switch}) $\frac{\theta}{x_{i_j}-x_i}(1-s_{i_j, i})$, where $1\leq i\leq N$, $i\not= i_j$. This is the $\textit{switch}$ from $i_j$ to $i$.
    \item (\textit{Term multiplication}) Multiplication by $c_F^\nu\partial_{i_j}(x_1^{a_1}\cdots x_N^{a_N})$ for $a_i$, $1\leq i\leq N$ such that $\nu\in P^+$ and $\pi((a_1,\ldots,a_N))=\nu$. We say that $c_F^\nu$ is the $\textit{constant}$ of the term multiplication.
    \item (\textit{Change}) $\theta (d_{i_j}-C_{{i_j},i})$, where $1\leq i\leq N$, $i\not= i_j$. This is the $\textit{change}$ from $i_j$ to $i$. Recall \pref{eq:decdeg} and \pref{eq:change} for the definitions of $d_{i_j}$ and $C_{i_j,i}$.
\end{enumerate}
\end{definition}

Let $s$ be a sequence with indices $\{i_j\}_{1\leq j\leq k}$. Suppose $1\leq j\leq k$. Given $i_j$, observe that sum of the term multiplications choices for $s_j$ is equivalent to multiplying by $\partial_{i_j} F(x_1,\ldots,x_N)$. Also, if $s_j$ is a term multiplication by $p=c_F^\nu\frac{\partial}{\partial x_{i_j}}x_1^{a_1}\cdots x_N^{a_N}$, we say that $x_i$ is \textit{degree-altered} by $s_j$ if $p\not=0$ and the degree of $x_i$ in $p$ is at least $1$. Next, for $0\leq j\leq k$, let
\begin{equation}
\label{eq:seqval}
r(s)_j = s_j\circ s_{j-1}\circ\cdots\circ s_1(1), 
\end{equation}
where $r(s)_0=1$.

\begin{lemma}
\label{sequences}
Suppose $s$ is a sequence of length $k$ and $0\leq j\leq k$. Suppose the positive integers $q$ such that $1\leq q\leq j$ and $s_q$ is a term multiplication are $j_i$, $1\leq i\leq m$, where $j_i<j_{i+1}$ for $1\leq i\leq m-1$. Also, assume $s_{j_i}$ has constant $c_F^{\nu_i}$ for $1\leq i\leq m$. Then,
\begin{equation}
\label{eq:polyform}
r(s)_j = \prod_{i=1}^m c_F^{\nu_i} \cdot P(x_1,\ldots,x_N)
\end{equation}
for a homogeneous integer polynomial $P(x_1,\ldots,x_N)$ which is $0$ or has degree $\sum_{i=1}^m |\nu_i|-j$.
\end{lemma}
\begin{proof}
It is clear that \pref{eq:polyform} is true for some integer polynomial $P(x_1, \ldots, x_N)$ from the definition of sequences. We prove that $P$ is $0$ or has degree $\sum_{i=1}^m |\nu_i|-j$ with induction on $j$. The base case $j=0$ is clear, since $r(s)_0=1$. Assume the result holds for $j$, $0\leq j\leq k-1$. We want to show the result holds for $j+1$. Suppose $P(x_1,\ldots,x_N)$ and $P'(x_1,\ldots,x_N)$ are the polynomials for $r(s)_j$ and $r(s)_{j+1}$, respectively.

If $r(s)_j=0$, then $r(s)_{j+1}=0$ and the statement holds. Assume $r(s)_j\not=0$, so $P$ is a homogeneous integer polynomial with degree $\sum_{i=1}^m |\nu_i|-j$. If $s_{j+1}$ is a derivative, switch, or change, we see that $P'$ is either $0$ or a homogeneous integer polynomial with degree
$\sum_{i=1}^m |\nu_i|-j-1$. However, if $s_{j+1}$ is term multiplication by $c_F^{\nu_{m+1}}\partial_{i_j}(x_1^{a_1}\cdots x_N^{a_N})$, we have that $P'(x_1,\ldots,x_N)=(\partial_{i_j}x_1^{a_1}\cdots x_N^{a_N})P(x_1,\ldots,x_N)$
is either $0$ or a homogeneous integer polynomial with degree $\sum_{i=1}^{m+1} |\nu_i|-j-1$. The induction is complete.
\end{proof}

\begin{corollary}[Lemma 5.3 of \cite{matrix}]
\label{polynomial}
For a sequence $\mathfrak{r}=\{i_j\}_{1\leq j\leq k}$ of positive integers and $N\geq\max(\mathfrak{r})$, $[1]\mathcal{D}^\theta_\mathfrak{r}(F(x_1,\ldots,x_N))$ is a polynomial in the $c_F^\nu$ for $\nu\in P^+$ which is homogeneous of degree $k$, where $c_F^\nu$ has degree $|\nu|$ for $\nu\in P^+$.
\end{corollary}
\begin{proof}
This follows from summing the applications of \Cref{sequences} to sequences $s$ with indices $\mathfrak{r}$ that only contain derivatives, term multiplications, and switches.
\end{proof}

Note that when computing $[1]\mathcal{D}_\mathfrak{r}^\theta(F)$, we only need to consider sequences $s$ such that $r(s)_k$ is nonzero and has degree $0$ in the $x_i$. We discuss this idea further and characterize such sequences in \Cref{def:setseq}.

Suppose $s$ is a sequence. Let $k$ be the length of $s$ and assume that $r(s)_k$ is nonzero and has degree $0$ in the $x_i$. From \Cref{sequences}, $r(s)_k=d\prod_{i=1}^m c_F^{\nu_i}$ for some nonzero integer $d$ and $\nu_i\in P^+$, $1\leq i\leq m$ such that the sum of the $|\nu_i|$ is $k$. Let $C(s)=d$.

\begin{definition}
\label{def:setseq}
Suppose $k$ is a positive integer, $\mathfrak{r}=\{i_j\}_{1\leq j\leq k}$ is a sequence of positive integers, and $\theta\in\mathbb{C}$. For $N\geq\max(\mathfrak{r})$, let $T_{N,\theta}(\mathfrak{r})$, $T_{N,\theta}^1(\mathfrak{r})$, and $T_{N,\theta}^2(\mathfrak{r})$ denote the sets of sequences $s$ over $x_1,\ldots,x_N$ with indices $\mathfrak{r}$ and factor $\theta$ such that $r(s)_k$ is nonzero and has degree $0$ in the $x_i$ such that:
\begin{itemize}
    \item If $s\in T_{N, \theta}(\mathfrak{r})$, the operators of $s$ can be any operator (derivatives, term multiplications, switches, or changes).
    \item If $s\in T_{N,\theta}^1(\mathfrak{r})$, the operators of $s$ are derivatives, term multiplications, or switches.
    \item If $s\in T_{N,\theta}^2(\mathfrak{r})$, the operators of $s$ are term multiplications or changes.
\end{itemize}
\end{definition}

Throughout \Cref{sec:main}, $T_{N,\theta}(\mathfrak{r})$, $T_{N,\theta}^1(\mathfrak{r})$, and $T_{N,\theta}^2(\mathfrak{r})$ are referred to. Importantly, we have that
\[
[1]\mathcal{D}^\theta_{\mathfrak{r}}(F(x_1,\ldots,x_N))= [1]\left(\prod_{j=1}^k\left(\mathcal{D}^\theta_{i_j} + \frac{\partial}{\partial x_{i_j}}F(x_1,\ldots,x_N)\right)\right)(1)= \sum_{s\in T_{N,\theta}^1(\mathfrak{r})} r(s)_k.
\]

We provide some notation we use to work in the context of \Cref{lln:generalization}. A complex number $\theta$ and a real number $c$ are given such that $c<1$. Furthermore, a sequence $\{\theta_N\}_{N\geq 1}$ of complex numbers that satisfies $\lim_{N\rightarrow\infty}N^c\theta_N=\theta$ is given.

\subsection{Orders of polynomials}

\begin{definition}
\label{def:order}
Suppose $Q(c_F^\nu, \nu\in P^+; N)$ is a polynomial in the $c_F^\nu$, $\nu\in P^+$ with finite degree and coefficients that are functions of $N$. Then, $Q(c_F^\nu, \nu\in P^+; N)$ is of $\textit{order}$ $N^k$ if for a term  
\[
\prod_{i=1}^m c_F^{\nu_i},
\]
the absolute value of its coefficient in $Q(c_F^\nu; N)$ is $O(N^{k-m(1-c)})$ for sufficiently large $N$.
\end{definition} 
If $Q(c_F^\nu; N)$ has an order of $N^k$, we also say that $Q(c_F^\nu; N)$ is $O(N^k)$. Later, for a sequence of $k$ positive integers $\mathfrak{r}$ and $S_N\subset T_{N,\theta_N}(\mathfrak{r})$ for $N\geq \max(\mathfrak{r})$, we consider when
\begin{equation}
\label{eq:order}
Q(c_F^\nu; N) = \sum_{s\in S_N} r(s)_k.
\end{equation}
Here, note that in $\sum_{s\in S_N} r(s)_k$, $\theta_N$ can be considered as a function of $N$. For example, we get $Q(c_F^\nu;N)=\mathcal{D}^{\theta_N}_{\mathfrak{r}}(F(x_1,\ldots,x_N))|_{x_i=0, 1\leq i\leq N}$ when $S_N=T_{N,\theta_N}^1(\mathfrak{r})$ for $N\geq \max(\mathfrak{r})$. Later on, \Cref{def:order} and \pref{eq:order} are used with $\mathcal{D}^{\theta_N}_{\mathfrak{r}}$ as well as other operators. 

\begin{proposition}
\label{prop:boundedconst}
Suppose $\mathfrak{r}=\{i_j\}_{1\leq j\leq k}$ is a sequence of positive integers and $l$ is a nonnegative integer. A constant $M>0$ exists such that for any $s\in T_{N,\theta_N}(\mathfrak{r})$ with $l$ switches or changes, $|C(s)|\leq M\cdot N^{-lc}$ for sufficiently large $N$.
\end{proposition}
\begin{proof}
For such a sequence $s$, for $0\leq j\leq k$, suppose $P_j(x_1,\ldots,x_N)$ is the polynomial for $r(s)_j$ in \pref{eq:seqval}. Suppose $s$ has $m$ term multiplications. Since each other operator decreases the degree in the $x_i$ by $1$, the degree in the $x_i$ of $P_j(x_1,\ldots,x_N)$ for $0\leq j\leq k$ is at most $k-m$, which is less than $k$. Otherwise, $r(s)_k$ will not be nonzero and have degree $0$ in the $x_i$.

Using this, the coefficient $M$ in \Cref{prop:boundedconst} can be obtained by, for $0\leq j\leq k-1$, bounding the factor $s_{j+1}$ changes the sum of the absolute values of the coefficients from $P_j(x_1,\ldots,x_N)$ to $P_{j+1}(x_1,\ldots,x_N)$. Furthermore, the $N^{-lc}$ is from the $l$ switches or changes, since $\lim_{N\rightarrow\infty}N^c\theta_N=\theta$. To finish the proof, note that $P_k(x_1,\ldots,x_N)=C(s)$.
\end{proof}

\begin{proposition}
\label{order}
Suppose $k$ is a positive integer and $\mathfrak{r}=\{i_j\}_{1\leq j\leq k}$ is a sequence of positive integers. For $N\geq\max(\mathfrak{r})$, $[1]\mathcal{D}^{\theta_N}_\mathfrak{r}(F(x_1,\ldots,x_N))$ is of order $N^{k(1-c)}$.
\end{proposition}

Note that \Cref{order} for $c=1$ is proved in Lemma 5.1 of the paper \cite{matrix}. The idea of the proof from the paper is also true for $c<1$, since the contribution of $\mathcal{D}_{i_j}^\theta + \frac{\partial}{\partial x_{i_j}} F(x_1,\ldots,x_N)$ to the order is $N^{1-c}$, which arises from $N$ switches with order $N^{-c}$ and a term multiplication with order $N^{1-c}$. Because this is true for $1\leq j\leq k$, the total contribution is $N^{k(1-c)}$. This method does not account for the $N^{\Omega(1)}$ terms of $F$, although it is clear that only a finite number of terms will contribute to the final value. For completeness, we include a detailed proof of \Cref{order}. The casework involved is also relevant for proving \Cref{remainder2}, \Cref{order2}, and \Cref{thm:equivalence} in \Cref{sec:coeff}.

\begin{proof}[Proof of \Cref{order}]
We know from \Cref{polynomial} that the expression will be a polynomial in the $c_{F}^\nu$ which is homogeneous of order $k$. Suppose that for a positive integer $m$, $\nu_i\in P^+$ for $1\leq i\leq m$ and the sum of the $|\nu_i|$ is $k$. Note that the number of possible $\nu_1, \ldots, \nu_m$, $1\leq m\leq k$, is finite. Then, if we show that for $N\geq\max(\mathfrak{r})$, the coefficient of
\[
p=\prod_{i=1}^m c_{F}^{\nu_i}
\]
is of order $N^{(k-m)(1-c)}$, we will be done by \Cref{def:order}. For $N\geq\max(\mathfrak{r})$, suppose the coefficient of $p$ is $r$; we want to show that $|r|=O(N^{(k-m)(1-c)})$. Let $T$ be the set of sequences $s \in T_{N,\theta_N}^1(\mathfrak{r})$ such that $r(s)_k=dp$, with $d$ a nonzero integer. Also, for $0\leq l\leq k$, let $T_l$ be the set of sequences $s\in T$ with $l$ switches. Moreover, for $s\in T$, let $D(s)$ be the set of $x_i$ such that $i\notin\mathfrak{r}$ which are degree-altered by a term multiplication of $s$.

The coefficient of $p$ is
\[
r=\sum_{s\in T} C(s) = \sum_{l = 0}^k \sum_{s\in T_l} C(s).
\]
From \Cref{prop:boundedconst}, there exists $P_l>0$ such that for all $N$, for all $s\in T_l$, $|C(s)|\leq P_lN^{-lc}$. Then, 
\[
\sum_{s\in T_l} |C(s)| \leq P_l N^{-lc}|T_l|
\]
from the triangle inequality.

Suppose $s\in T$. We know that exactly $m$ of the $s_i$ must be term multiplications, and as seen in the proof of \Cref{prop:boundedconst}, the degree in the $x_i$ of $r(s)_j$ for $0\leq j\leq k$ is at most $k-m$. Also, the number of switches is at most $k-m$, meaning that $|T_l|=0$ if $l>k-m$.

\begin{claim}
\label{claim:switches}
Suppose $s\in T$. If $x_i\in D(s)$, suppose the first term multiplication that degree-alters $x_i$ is $s_{j'}$. Then, $s_j$ must be a switch from $i_j$ to $i$ for some $j$, $j'<j\leq k$.
\end{claim}
\begin{proof}
Note that in $r_{j'}(s)$, $x_i$ is in all of the nonzero terms, and must be removed because $r_k(s)$ cannot contain $x_i$. For the sake of contradiction, assume the statement does not hold. If $j'=k$, then $r_k(s)=r_{j'}(s)$ contains $x_i$, a contradiction since $r_k(s)=dp$ for a nonzero integer $d$. Therefore, $j'<k$.

For $j$ such that $j'<j\leq k$, we have that if $s_j$ is a derivative, term multiplication, or switch, all terms of $r_j(s)$ will have $x_i$. Since $x_i\in D(s)$, it cannot be removed from a term by a derivative without converting the term to zero, and it similarly cannot be removed by a term multiplication. Based on the assumption, it also cannot be removed by a switch. Then, in $r_k(s)\not= 0$, the terms will contain $x_i$, a contradiction.
\end{proof}

Suppose $s\in T_l$ and let $S=\{x_{i_j}|1\leq j\leq k\}$. If $|D(s)|=d$, from \Cref{claim:switches}, we see that we can find one switch from $i_j$ to $i$ for each $x_i\in D(s)$, giving $d$ switches in total, where $0\leq d\leq l$. Also, the number possibilities for $D(s)$, the number subsets of $\{x_1, \ldots, x_N\}\backslash S$ with size $d$, is at most $N^d$. Next, suppose $X\subset\{x_1, \ldots, x_N\}\backslash S$, $|X|=d$. Because the variables not in $\mathfrak{r}$ are symmetric, the number of $s\in T$ such that $D(s)=X$ is the same for all such $X$; let this number be $Q_d$.

In $s$, there are $m$ term multiplications, known as $\alpha(s)$ operators. Also, there are $d\leq k-m$ operators which are switches from $i_j$ to $i$, where $x_i\in D(s)$ and the $i$ are distinct, known as $\beta(s)$ operators. The other $k-m-d$ operators, known as $\gamma(s)$ operators, can be derivatives or any switch. In this proof, there can be overcounting of $Q_d$, with $s\in T$ counted multiple times. Also, for $0\leq d\leq l$, there are $\frac{k!}{m!d!(k-m-d)!}$ possible groupings of the $s_j$ into $\alpha(s), \beta(s)$, and $\gamma(s)$ operators. 

For a term multiplication, or an $\alpha(s)$ operator, suppose the term before the derivative is $q$. We know that all $x_i$ in $q$ must be in $S\cup D(s)$. Also, the total degree of $q$ after the derivative is at most $k-m$. We look at degrees of the $x_i$ in $q$. Note that for $x_{i_j}$, where $\partial_{i_j}q\not=0$, the degree must be at least $1$ and at most $k-m+1$, giving $k-m+1$ possibilities. For the other $x_i$, the degree must be at least $0$ and at most $k-m$, also giving $k-m+1$ possibilities. With this, the number of possibilities for the $\alpha(s)$ operators is at most
\[
(k-m+1)^{(k+|D(s)|)m}=(k-m+1)^{(k+d)m}.
\]
On the other hand, for the $\beta(s)$ operators, there being $d$ in total, there are $d!$ possible orderings. Finally, consider the $k-m-d$ $\gamma(s)$ operators. We know that $l-d$ of the $\gamma(s)$ operators are switches, and the others are derivatives. We see that since each switch has $N-1$ possibilities, the number of possible $\gamma(s)$ operators is at most
\[
(N-1)^{l-d}\cdot\frac{(k-m-d)!}{(l-d)!(k-m-l)!}
\]
Then, for a constant $C_d'$ not depending on $N$,
\begin{align*}
Q_d&\leq \frac{k!}{m!d!(k-m-d)!}\cdot (k-m+1)^{(k+d)m}\cdot d!\cdot (N-1)^{l-d}\cdot\frac{(k-m-d)!}{(l-d)!(k-m-l)!}\\
& \leq C_d'N^{l-d}.
\end{align*}

From this, for $0\leq d\leq l$, the number of $s\in T_l$ such that $|D(s)|=d$ is at most $N^d Q_d\leq C_d'N^l$, with $D(s)$ having at most $N^d$ possibilities. Therefore,
\[
|T_l| \leq \left(\sum_{d=0}^l C_d'\right)N^l,
\]
and $|T_l|=O(N^l)$. From this, for a constant $K_l$, $|T_l|\leq K_lN^l$ for sufficiently large $N$. Note that
\[
r = \sum_{s\in T} C(s) = \sum_{l=0}^{k-m} \sum_{s\in T_l} C(s),
\]
and with the triangle inequality,
\[
|r|\leq\sum_{l=0}^{k-m}\sum_{s\in T_l} |C(s)|\leq\sum_{l=0}^{k-m} P_l N^{-lc}|T_l|\leq\sum_{l=0}^{k-m} P_lK_l N^{l(1-c)}
\]
for sufficiently large $N$. Then, since $c<1$, $|r|=O(N^{(k-m)(1-c)})$ for sufficiently large $N$, as needed. 
\end{proof}

\subsection{Remainders}
\label{subsec:remainders}
\begin{lemma}
\label{prop:limitzero}
Suppose $\{a_\nu\}_{\nu\in P^+}$ is a sequence of functions from $\mathbb{N}$ to $\mathbb{C}$ such that for all $\nu\in P^+$, there exists a constant $C>0$ such that $|a_\nu(N)|\leq C N^{1-c}$ for sufficiently large $N$. Then, if $Q(c_F^\nu, \nu\in P^+; N)$ is of order $N^k$ and $\epsilon>0$,
\[
\lim_{N\rightarrow\infty}\frac{Q(c_F^\nu = a_\nu(N), \nu\in P^+; N)}{N^{k+\epsilon}}=0.
\]
\end{lemma}
\begin{proof}
This is clear by showing that the $N\rightarrow\infty$ limit of each of the finitely many terms of $\frac{1}{N^{k+\epsilon}}Q(c_F^\nu=a_\nu(N), \nu\in P^+;N)$ is $0$.
\end{proof}

Suppose we have a polynomial $R(c_F^\nu, \nu\in P^+; N)$ which is of order $N^k$. From the conditions of \Cref{lln:generalization}, we see that for each $\nu\in P^+$, there exists a constant $C>0$ such that $|c_{F_N}^\nu|\leq CN^{1-c}$ for sufficiently large $N\geq 1$. For all $\nu\in P^+$, let $a_\nu$ be the function such that $a_\nu(N)=c_{F_N}^{\nu}$ for $N\geq 1$. Afterwards, from \Cref{prop:limitzero}, for $\epsilon>0$,
\begin{equation}
\label{eq:remainderto0}
\lim_{N\rightarrow\infty} \frac{R(c_F^\nu = c_{F_N}^\nu, \nu\in P^+; N)}{N^{k+\epsilon}}=\lim_{N\rightarrow\infty} \frac{R(c_F^\nu = a_\nu(N), \nu\in P^+; N)}{N^{k+\epsilon}} = 0.
\end{equation}
We use \pref{eq:remainderto0} in the proof of \Cref{lln:generalization} in Subsection \ref{subsec:llngeneralization} to show that such remainders $R$ have a $N\rightarrow\infty$ limit of $0$. Particularly, \pref{eq:remainderto0} can be used with various results from Subsections \ref{subsec:remainders} and \ref{subsec:differentop}.

The following result has a similar statement and proof as \cite{matrix}*{Corollary 5.4}; the only difference is the order of the remainder.

\begin{proposition}
\label{remainder1}
Suppose that $\lambda$ is a partition with $\ell(\lambda)=m$ and $|\lambda|=k$. Also, suppose $\mathfrak{r}=\{i_j\}_{1\leq j\leq k}$ is a sequence such that the first $\lambda_1$ values are $1$, the next $\lambda_2$ values are $2$, and so forth, until the last $\lambda_m$ values are $m$. Then, for sufficiently large $N\geq m$,
\[
\frac{1}{N^{m}} \sum_{l\in I_N(\lambda)} [1]\mathcal{D}^{\theta_N}_{l}(F(x_1,\ldots,x_N))  = [1]\mathcal{D}^{\theta_N}_\mathfrak{r}(F(x_1,\ldots,x_N)) + R
\]
for a homogeneous polynomial $R$ in the $c_F^\nu$ with degree $k$ and order $N^{k(1-c)-1}$.
\end{proposition}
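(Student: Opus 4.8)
The plan is to split $I_N(\lambda)$ according to whether the $m$ indices $(i_1,\dots,i_m)$ defining an element $l\in I_N(\lambda)$ are pairwise distinct, writing $I_N(\lambda)=I_N^{\mathrm{dist}}(\lambda)\sqcup I_N^{\mathrm{coll}}(\lambda)$. The engine of the argument is the observation that $[1]\mathcal{D}_l(F(x_1,\dots,x_N))$ depends only on the coincidence pattern of $l$. Indeed, for a permutation $\sigma$ of $\{1,\dots,N\}$, conjugation by the variable permutation $\sigma$ sends $\mathcal{D}_i$ to $\mathcal{D}_{\sigma(i)}$ (immediate from \Cref{def:operators}, since $\partial_i\mapsto\partial_{\sigma(i)}$, $\frac{1}{x_i-x_j}(1-s_{i,j})\mapsto\frac{1}{x_{\sigma(i)}-x_{\sigma(j)}}(1-s_{\sigma(i),\sigma(j)})$, and the sum over $j\neq i$ is the sum over $\sigma(j)\neq\sigma(i)$) and sends multiplication by $\partial_iF$ to multiplication by $\partial_{\sigma(i)}F$ (here using that $F$ is symmetric, so $\sigma F=F$), while $\sigma$ fixes the constant function $1$ and the constant-term functional $[1]$ is invariant under any permutation of variables. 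Hence $[1]\mathcal{D}_{\sigma\cdot l}(F)=[1]\mathcal{D}_l(F)$, where $\sigma\cdot l=(\sigma(l_1),\dots,\sigma(l_k))$. In particular, since $\mathfrak{r}$ and every $l\in I_N^{\mathrm{dist}}(\lambda)$ share the same pattern, $[1]\mathcal{D}_l(F)=[1]\mathcal{D}_{\mathfrak{r}}(F)$ for all such $l$, and $|I_N^{\mathrm{dist}}(\lambda)|=N(N-1)\cdots(N-m+1)$.

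From this, the distinct part contributes
\[
\frac{1}{N^m}\sum_{l\in I_N^{\mathrm{dist}}(\lambda)}[1]\mathcal{D}_l(F)
=\frac{N(N-1)\cdots(N-m+1)}{N^m}\,[1]\mathcal{D}_{\mathfrak{r}}(F)
=[1]\mathcal{D}_{\mathfrak{r}}(F)+R_1,
\]
with $R_1=\Bigl(\tfrac{N(N-1)\cdots(N-m+1)}{N^m}-1\Bigr)[1]\mathcal{D}_{\mathfrak{r}}(F)$. By \Cref{polynomial}, $[1]\mathcal{D}_{\mathfrak{r}}(F)$ is a homogeneous polynomial of degree $k$ in the $c_F^\nu$, and by \Cref{order} it has order $N^k$; since the scalar factor equals $-\tfrac{m(m-1)/2}{N}+O(N^{-2})$, it is $O(N^{-1})$, so multiplying the coefficient of a monomial with $m'$ factors — which is $O(N^{k-m'})$ — by this factor gives $O(N^{k-1-m'})$. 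Hence $R_1$ is a homogeneous polynomial of degree $k$ with order $N^{k-1}$. For the collision part, I would group $I_N^{\mathrm{coll}}(\lambda)$ by coincidence pattern; there are only finitely many patterns, and a pattern using $p<m$ distinct indices is realized by at most $N^p\le N^{m-1}$ elements of $I_N(\lambda)$, all of which give the same value by the previous paragraph. Each such common value is a homogeneous polynomial of degree $k$ (\Cref{polynomial}) of order $N^k$ (\Cref{order}, which applies since $N\ge\max(l)$ for every $l\in I_N(\lambda)$). Thus in $R_2:=\frac{1}{N^m}\sum_{l\in I_N^{\mathrm{coll}}(\lambda)}[1]\mathcal{D}_l(F)$ the coefficient of a monomial with $m'$ factors is bounded by a fixed multiple of $\tfrac{1}{N^m}\cdot N^{m-1}\cdot O(N^{k-m'})=O(N^{k-1-m'})$, so $R_2$ too is a homogeneous polynomial of degree $k$ with order $N^{k-1}$. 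Setting $R=R_1+R_2$ yields the claim, since $\frac{1}{N^m}\sum_{l\in I_N(\lambda)}[1]\mathcal{D}_l(F)=[1]\mathcal{D}_{\mathfrak{r}}(F)+R_1+R_2$ and a sum of two homogeneous degree-$k$ polynomials of order $N^{k-1}$ is again one.

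I expect the only real content beyond this bookkeeping to be the equivariance statement in the first paragraph, and it is essentially immediate from the definitions; the point to be careful about is the power-of-$N$ accounting — namely that the single lost power of $N$, coming either from the ratio $\tfrac{N(N-1)\cdots(N-m+1)}{N^m}-1$ or from there being only $O(N^{m-1})$ collision tuples, exactly matches the drop in order from $N^k$ to $N^{k-1}$, uniformly over the finitely many monomials and coincidence patterns that occur.
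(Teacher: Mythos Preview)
Your proof is correct and follows essentially the same approach as the paper: split $I_N(\lambda)$ into distinct and colliding tuples, use symmetry to identify all distinct-tuple terms with $[1]\mathcal{D}_{\mathfrak{r}}(F)$, and bound the two resulting error pieces using \Cref{polynomial} and \Cref{order}. Your version is somewhat more explicit than the paper's (you spell out the equivariance under permutations and the grouping of the collision terms by coincidence pattern, which also handles uniformity of the order-$N^k$ bound cleanly), but the argument is the same.
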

\begin{proof}
Suppose that in $l\in I_N(\lambda)$, the first $\lambda_1$ indices are $i_1$, the next $\lambda_2$ indices are $i_2$, and so forth, until the last $\lambda_m$ indices are $i_m$, where $1\leq i_j\leq N$ for $1\leq j\leq m$. From \Cref{order}, for all $l\in I_N(\lambda)$, $\mathcal{D}^{\theta_N}_{l}(F(x_1,\ldots,x_N))$ has order $N^{k(1-c)}$. Note that if $l$ has all $i_j$ distinct, $[1]\mathcal{D}^{\theta_N}_{l}(F(x_1,\ldots,x_N))$ is equal, by symmetry, to $[1]\mathcal{D}^{\theta_N}_{\mathfrak{r}}(F(x_1,\ldots,x_N))$. Consider the set $S$ of other $l$, where some of the $i_j$ are equal. Let
\[
R'=\frac{1}{N^m}\sum_{l\in S} [1]\mathcal{D}^{\theta_N}_l(F(x_1,\ldots,x_N)).
\]
From \Cref{polynomial}, $R'$ is a polynomial in the $c_F^\nu$ which is homogeneous of degree $k$. 

Note that there are $N^{m}-O(N^{m-1})$ possible $l$ with all of the $i_j$ are distinct, and dividing by $N^{m}$ will give that the sum of $\frac{1}{N^m}[1]\mathcal{D}_l^{\theta_N}(F(x_1,\ldots,x_N))$ for such $l$ is
\[
\left(1-O\left(\frac{1}{N}\right)\right)[1]\mathcal{D}^{\theta_N}_{\mathfrak{r}}(F(x_1,\ldots,x_N)).
\]
Also, $|S|=O(N^{m-1})$ and each $\mathcal{D}^{\theta_N}_l(F(x_1,\ldots,x_N))$ is order $N^{k(1-c)}$, so $R'$ is $\frac{1}{N^m}\cdot O(N^{m-1})\cdot O(N^{k(1-c)})=O(N^{k(1-c)-1})$.
Then, we get
\[
R=R'-O\left(\frac{1}{N}\right)[1]\mathcal{D}^{\theta_N}_{\mathfrak{r}}(F(x_1,\ldots,x_N))
\]
is a homogeneous polynomial in the $c_F^\nu$ with degree $k$ and order $N^{k(1-c)-1}$, as desired.
\end{proof}

The following result has a similar statement as Claim A in the proof of Proposition 5.5 of \cite{matrix} and the approach we use follows the same general idea. However, we require more details, because we must account for the coefficients $c_F^\nu$ having order $N^{1-c}$, which is polynomial in $N$ since $c<1$.

\begin{lemma}
\label{remainder2}
Suppose that $\mathfrak{r}=\{i_j\}_{1\leq j\leq k}$ is a sequence of positive integers. For $N\geq \max(\mathfrak{r})$, let $H$ be the set of sequences $s$ in $T_{N,\theta_N}^1(\mathfrak{r})$ which satisfy the following conditions:
\begin{itemize}
    \item For $1\leq j\leq k$, if $s_j$ is the switch from $i_j$ to $i$, $i\notin \mathfrak{r}$.
    \item There do not exist integers $i$, $j_1$, and $j_2$, $1\leq i\leq N$, $1\leq j_1, j_2\leq k$, $j_1\not=j_2$ such that $s_{j_1}$ is the switch from $i_{j_1}$ to $i$ and $s_{j_2}$ is the switch from $i_{j_2}$ to $i$.
\end{itemize} 
Then, 
\[[1]\mathcal{D}^{\theta_N}_{\mathfrak{r}}(F(x_1,\ldots,x_N)) = \sum_{s\in H} r(s)_k + R\]
for a homogeneous polynomial $R$ in the $c_F^\nu$ with degree $k$ and order $N^{k(1-c)-1}$.
\end{lemma}
\begin{proof}
Suppose that $Q$ is the set $s\in T_{N,\theta_N}^1(\mathfrak{r})$ such that at least one of the conditions is not followed. Then,
\[
R = \sum_{s\in Q} r(s)_k.
\]
Since $Q\subset T_{N,\theta_N}^1(\mathfrak{r})$, $R$ is a polynomial in the $c_F^{\nu}$ which is homogeneous of degree $k$. It suffices to show that the coefficient in $R$ of each term
\[
p=\prod_{i=1}^m c_{F}^{\nu_i}
\]
is order $N^{(k-m)(1-c)-1}$ from \Cref{def:order}. We consider the set $T$ of $s\in Q$ such that $r(s)_k$ equals $p$ multiplied by a nonzero integer. Following the proof of \Cref{order}, let the coefficient of $p$ be $r$. We want to show that $|r|$ is $O(N^{(k-m)(1-c)-1})$. For $0\leq l\leq k-m$, suppose $T_l$ is the set of $s\in T$ with $l$ switches. For $s\in T_l$, we know that $|C(s)|\leq P_l\cdot N^{-lc}$ for a $P_l>0$ independent of $N$, see \Cref{prop:boundedconst}. Also, for $s\in T$, let $D(s)$ be the set of $x_i$ such that $i\notin\mathfrak{r}$ which are degree-altered by a term multiplication of $s$. Let $d=|D(s)|$.

If $s\in T_l$, $s$ has $m$ term multiplications, or $\alpha(s)$ operators. Also, there are $d$, $d\leq l$, $\beta(s)$ operators which are switches from $i_j$ to $i$ where $x_i\in D(s)$, such that each $x_i\in D(s)$ is in exactly one $\beta(s)$ operator. Note that the $\beta(s)$ operators exist by \Cref{claim:switches}. The other $k-m-d$ operators are $\gamma(s)$ operators, and are derivatives or switches. Note that the number of switches in the $\gamma(s)$ operators is $l-d$, and each of these switch has $N-1$ possibilities.

Consider when one of the switches is from $i_j$ to $i\in \mathfrak{r}$. Since $\mathfrak{r}$ and $D(s)$ are disjoint, the switch must be a $\gamma(s)$ operator. Afterwards, since $|\mathfrak{r}|$ does not depend on $N$, the number of possibilities for the $\gamma(s)$ operators is $O(N^{l-d-1})$. If $d=l$, the number of possibilities is $0=O(N^{l-d-1})$, since none of the $\gamma(s)$ operators are switches. With the number of possibilities for $D(s)$ being at most $N^d$ and for the other choices being bounded, the total number of such $s\in T_l$ is then $O(N^{l-1})$. 

For the next case, suppose there are two switches with the same $i$. Then, we could have a $\beta(s)$ and $\gamma(s)$ switch or two $\gamma(s)$ switches with the same $i$. For the first case, there are at most $N$ ways to pick $i$. Afterwards, the number of possibilities for $D(s)$ is at most $N^{d-1}$ and the $\gamma(s)$ switches is $O(N^{l-d-1})$. Where there are $O(1)$ possibilities for the other choices, this gives $O(N^{l-1})$ possibilities for the first case. Next, for the second case, there are at most $N$ ways to choose $i$. Afterwards, the number of possibilities for $D(s)$ is at most $N^d$ and the $\gamma(s)$ switches is $O(N^{l-d-2})$. Since there are $O(1)$ possibilities for the other choices, this gives 
$O(N^{l-1})$ possibilities for the second case. The number of $s\in T_l$ for this case is then $O(N^{l-1})$ as well.

Therefore, $|T_l|=O(N^{l-1})$. Because of this,
\[
\Bigg|\sum_{s\in T_l} C(s)\Bigg| \leq \sum_{s\in T_l} |C(s)| \leq P_lN^{-lc}|T_l|
\]
is $O(N^{l(1-c)-1})$. However,
\[
r=\sum_{s\in T} C(s) = \sum_{l=0}^{k-m}\sum_{s\in T_l} C(s).
\]
After this, since $l\leq k-m$, $|r|$ is $O(N^{(k-m)(1-c)-1})$.
\end{proof}

\subsection{Sequences involving changes}
\label{subsec:differentop}

Next, we consider the operators $\mathcal{Q}_i^N$ from \Cref{sec:formal}, which involve changes. Recall that the operators are defined in \pref{eq:operator2}. Suppose that for $N\geq i$,
\[
\mathcal{Q}_i^N\left(\frac{\partial_i F}{N^{1-c}}\right) = \frac{N^c\theta_N}{N}\sum_{\substack{1\leq j\leq N, \\ j\not= i}} (d_i-C_{i,j})+\frac{\partial_i F(x_1,\ldots,x_N)}{N^{1-c}},
\]
and we recall that $\lim_{N\rightarrow\infty} N^c\theta_N=\theta$. Afterwards,
\[
N^{1-c}\mathcal{Q}_i^N\left(\frac{\partial_i F}{N^{1-c}}\right)  = \theta_N\sum_{\substack{1\leq j\leq N, \\ j\not= i}} (d_i-C_{i,j}) + \frac{\partial}{\partial x_i} F(x_1,\ldots,x_N)
\]
is considered. In \Cref{remainder3}, we show that we can replace each $\mathcal{D}^{\theta_N}_{i_j}$ with $N^{1-c}\mathcal{Q}_{i_j}^N\left(\frac{\partial_{i_j}F}{N^{1-c}}\right)$. Afterwards, in \Cref{remainder4}, we use the $\mathcal{R}_{i,j}^k$ operators to evaluate the resulting expression. Similarly as before, for $\mathfrak{r}=\{i_j\}_{1\leq j\leq k}$ and $N\geq\max(\mathfrak{r})$, we let
\[
\mathcal{Q}_{\mathfrak{r}}^N(F(x_1,\ldots,x_N)) = N^{k(1-c)}\prod_{j=1}^k \mathcal{Q}_{i_j}^N\left(\frac{\partial_{i_j} F}{N^{1-c}}\right)(1), 
\]
with
\[
[1]\mathcal{Q}_{\mathfrak{r}}^N(F(x_1,\ldots,x_N))=\sum_{s\in T_{N,\theta_N}^2(\mathfrak{r})} r(s)_k.
\]

The following analogue of \Cref{polynomial} holds.
\begin{corollary}
\label{polynomialchange}
For a sequence $\mathfrak{r}=\{i_j\}_{1\leq j\leq k}$ of positive integers and $N\geq\max(\mathfrak{r})$, $[1]\mathcal{Q}^N_\mathfrak{r}(F(x_1,\ldots,x_N))$ is a polynomial in the $c_F^\nu$ for $\nu\in P^+$ which is homogeneous of degree $k$, where $c_F^\nu$ has degree $|\nu|$ for $\nu\in P^+$.
\end{corollary}
\begin{proof}
This follows from summing the applications of \Cref{sequences} to sequences $s$ with indices $\mathfrak{r}$ and factor $\theta_N$ that only contain term multiplications and changes.
\end{proof}

\begin{proposition}
\label{order2}
Let $\mathfrak{r}=\{i_j\}_{1\leq j\leq k}$ be a sequence of positive integers. For $N\geq\max(\mathfrak{r})$, $[1]\mathcal{Q}^N_{\mathfrak{r}}(F(x_1,\ldots,x_N))$ has order $N^{k(1-c)}$.
\end{proposition}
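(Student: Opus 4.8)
The plan is to mimic the proof of \Cref{order} almost verbatim, with the operator $\mathcal{D}_\mathfrak{r}$ replaced by $\mathcal{Q}_\mathfrak{r}^N$, the set $T_N^1(\mathfrak{r})$ replaced by $T_N^2(\mathfrak{r})$, and every ``switch'' replaced by a ``change''. By \pref{eq:sequenceoperators2} we have $[1]\mathcal{Q}_\mathfrak{r}^N(F(x_1,\ldots,x_N)) = \sum_{s\in T_N^2(\mathfrak{r})} r(s)_k$, and by \Cref{sequences} each $r(s)_k$ with $s\in T_N^2(\mathfrak{r})$ equals $C(s)\prod_{i=1}^m c_F^{\nu_i}$ with $\sum_i|\nu_i|=k$; hence $[1]\mathcal{Q}_\mathfrak{r}^N(F)$ is a homogeneous polynomial of degree $k$ in the $c_F^\nu$. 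By \Cref{def:order} it then suffices to fix a monomial $p=\prod_{i=1}^m c_F^{\nu_i}$ with $\sum|\nu_i|=k$ and show that its coefficient is $O(N^{k-m})$. I would let $T$ be the set of $s\in T_N^2(\mathfrak{r})$ with $r(s)_k$ a nonzero multiple of $p$; by \Cref{prop:boundedconst} there is $C>0$ with $|C(s)|\le C$ for all $N$ and all $s\in T$, so the coefficient of $p$ has absolute value at most $C|T|$, and it is enough to prove $|T|=O(N^{k-m})$.

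The one genuinely new ingredient is the analogue of \Cref{claim:switches}: if $s\in T$ and $x_i\in D_{N,\mathfrak{r}}(s)$ with $s_{j'}$ the first term multiplication containing $x_i$, then some $s_j$ with $j'<j\le k$ is a change from $i_j$ to $i$. Indeed, after $s_{j'}$ every nonzero term of $r(s)_{j'}$ contains $x_i$; the only operators in $T_N^2(\mathfrak{r})$ are term multiplications and changes, a term multiplication merely multiplies by a monomial times a constant, and a change $\theta(d_{i_j}-C_{i_j,i''})$ applied to a polynomial all of whose terms contain $x_i$ has the derivative part $d_{i_j}$ preserve every power of $x_i$ (since $i\notin\mathfrak{r}$, so $i\ne i_j$) and has the $C_{i_j,i''}$ part delete $x_{i''}$ rather than $x_i$ whenever $i''\ne i$. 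So unless a change from some $i_j$ to $i$ occurs after $s_{j'}$, $r(s)_k$ still contains $x_i$, contradicting that $r(s)_k$ has degree $0$ in the $x_i$. I expect this verification --- checking that neither the derivative part nor a wrong-target $C$ part of a change can remove $x_i$ --- to be the main (and essentially the only) obstacle; once it is in place the rest is bookkeeping.

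With this claim the counting follows the proof of \Cref{order}. Writing $d=|D_{N,\mathfrak{r}}(s)|$, we have $0\le d\le k-m$ (each of the $d$ variables requires its own change, and $s$ has exactly $k-m$ changes), and there are at most $N^d$ possibilities for the set $D_{N,\mathfrak{r}}(s)$; for a fixed such set $X$, symmetry in the variables outside $\mathfrak{r}$ shows the number $T_d$ of $s\in T$ with $D_{N,\mathfrak{r}}(s)=X$ is independent of $X$. I would then split the $k$ operators of such an $s$ into the $m$ term multiplications, $d$ distinguished changes (one from $i_j$ to each element of $X$, supplied by the claim), and the remaining $k-m-d$ changes: the number of groupings is $\frac{k!}{m!\,d!\,(k-m-d)!}$, each term-multiplication monomial is supported on $S\cup X$ with $|S\cup X|\le k+d$ and exponents bounded by $k$ and so contributes a bounded number of choices, the $d$ distinguished changes contribute at most $d!$ orderings of their targets, and each of the remaining $k-m-d$ changes has fewer than $N$ possible targets. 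Hence $T_d\le C_d N^{k-m-d}$ for a constant $C_d$ independent of $N$, so the number of $s\in T$ with $|D_{N,\mathfrak{r}}(s)|=d$ is at most $N^d\cdot C_d N^{k-m-d}=C_d N^{k-m}$, and summing over $0\le d\le k-m$ gives $|T|\le\bigl(\sum_{d=0}^{k-m}C_d\bigr)N^{k-m}=O(N^{k-m})$, which completes the proof.
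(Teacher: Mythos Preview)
Your proposal is correct and takes essentially the same approach as the paper: the paper's proof is a one-line reference to \Cref{order} with derivatives removed and switches replaced by changes, and you have carried out exactly this substitution in detail (including spelling out the analogue of \Cref{claim:switches} for changes, which the paper leaves implicit).
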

\begin{proof}
The same proof as for \Cref{order} can be used, but with no derivatives and changes instead of switches. Particularly, if a switch is from $i$ to $j$, a change from $i$ to $j$ is used instead.
\end{proof}

\begin{lemma}
\label{newremainder2}
Suppose that $\mathfrak{r}=\{i_j\}_{1\leq j\leq k}$ is a sequence of positive integers. For $N\geq\max(\mathfrak{r})$, suppose $H$ is the set of sequences $s$ in $T_{N,\theta_N}^2(\mathfrak{r})$ which satisfy the following conditions:
\begin{itemize}
    \item For $1\leq j\leq k$, if $s_j$ is the change from $i_j$ to $i$,  $i\notin\mathfrak{r}$.
    \item There do not exist integers $i$, $j_1$, and $j_2$, $1\leq i\leq N$, $1\leq j_1, j_2\leq k$, $j_1\not=j_2$, such that $s_{j_1}$ is the change from $i_{j_1}$ to $i$ and $s_{j_2}$ is the change from $i_{j_2}$ to $i$.
\end{itemize} 
Then,
\[
[1]\mathcal{Q}_{\mathfrak{r}}^N(F(x_1,\ldots,x_N)) = \sum_{s\in H} r(s)_k+ R
\]
for a homogeneous polynomial $R$ in the $c_F^\nu$ with degree $k$ and order $N^{k(1-c)-1}$.
\end{lemma}
\begin{proof}
The same proof as for \Cref{remainder2} can be used, but with no derivatives and changes instead of switches. Particularly, if a switch is from $i$ to $j$, a change from $i$ to $j$ is used instead.
\end{proof}

\begin{theorem}
\label{remainder3}
Suppose that $k$ is a positive integer and $\mathfrak{r}=\{i_j\}_{1\leq j\leq k}$ is a sequence of positive integers. For $N\geq \max(\mathfrak{r})$,
\begin{equation}
\label{eq:diffoperators}
    [1]\mathcal{D}^{\theta_N}_\mathfrak{r}(F(x_1,\ldots,x_N))= [1]\mathcal{Q}_\mathfrak{r}^N(F(x_1,\ldots,x_N)) + R
\end{equation}
for a homogeneous polynomial $R$ in the $c_F^\nu$ with degree $k$ and order $N^{k(1-c)-1+\max(c,0)}$.
\end{theorem}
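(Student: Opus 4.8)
The plan is to expand both sides over the sequence sets $T_N^1(\mathfrak r)$ and $T_N^2(\mathfrak r)$, discard negligible families with the remainder lemmas already established, and match what remains. To begin, by \Cref{polynomial} — and by the same reasoning applied to the sequence expansion \pref{eq:sequenceoperators2} of $\mathcal{Q}_\mathfrak{r}^N$ via \Cref{sequences} — both $[1]\mathcal{D}_\mathfrak{r}(F(x_1,\ldots,x_N))$ and $[1]\mathcal{Q}_\mathfrak{r}^N(F(x_1,\ldots,x_N))$ are homogeneous polynomials of degree $k$ in the $c_F^\nu$, and by \Cref{order} and \Cref{order2} they have order $N^k$; so $R$ is automatically homogeneous of degree $k$, and only the order bound on $R$ needs to be sharpened to $N^{k-1}$.

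Next I would apply \Cref{remainder2} to the left-hand side and \Cref{newremainder2} to the right-hand side. Each extracts a homogeneous degree-$k$ remainder of order $N^{k-1}$ and reduces the task to showing that $\sum_{s\in H_1}r(s)_k-\sum_{s\in H_2}r(s)_k$ has order $N^{k-1}$, where $H_1\subseteq T_N^1(\mathfrak r)$ and $H_2\subseteq T_N^2(\mathfrak r)$ consist of the ``good'' sequences whose switches, resp.\ changes, all target distinct indices outside $\mathfrak r$; since this difference is homogeneous of degree $k$, it suffices to bound, for each monomial $p=\prod_{i=1}^m c_F^{\nu_i}$, the coefficient of $p$ in it by $O(N^{k-m-1})$.

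The core step is a correspondence between good sequences that fixes term multiplications and replaces each switch from $i_j$ to $l$ by the change from $i_j$ to $l$, leaving unmatched the good $\mathcal{D}$-sequences that use a plain derivative $\partial_{i_j}$. The key computation, performed on a single monomial, is that $\frac{\theta}{x_{i_j}-x_l}(1-s_{i_j,l})$ agrees with $-\theta C_{i_j,l}$ when $x_{i_j}$ is absent and with $\theta\partial_{i_j}$ when $x_{i_j}$ appears to the first power, up to monomials carrying a positive power of $x_l$; in a good sequence the target $x_l$ is never acted on again (its unique switch or change has already fired and $l\notin\mathfrak r$), so these monomials are killed by $[1]$. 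Hence a good $\mathcal{D}$-sequence with no derivative step in which every switch acts on an argument of degree $\leq 1$ in the switched variable has the same $r(s)_k$ as its image, and the bijection between such sequences and the corresponding $\mathcal{Q}$-sequences cancels exactly, leaving only the ``exceptional'' sequences to estimate.

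Finally I would bound the exceptional sequences. Those with a derivative step are controlled by the counting bookkeeping of \Cref{order} and \Cref{remainder2}: a derivative occupies a position without producing a $c_F$-factor, so for fixed $p$ only $O(N^{k-m-1})$ such sequences end in $p$. The families where a switch or change acts on a variable already raised to a power $\geq 2$ are the genuine difficulty: there a switch firing on $x_{i_j}^a$ and the corresponding change differ at that step by a factor depending on $a$, so they do not cancel term by term, and one must instead pair these $\mathcal{D}$-sequences against both the matching $\mathcal{Q}$-sequences and the plain-derivative $\mathcal{D}$-sequences sharing the same skeleton and check that the surviving contribution is again of order $N^{k-1}$. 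This cancellation — which must track repeated indices in $\mathfrak r$ and several term multiplications piling powers onto one variable — is the main obstacle; once it is in hand, summing the three order-$N^{k-1}$ contributions gives $R$.
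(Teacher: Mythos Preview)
Your overall plan matches the paper's: reduce both sides via \Cref{remainder2} and \Cref{newremainder2} to the good sequences $H_1\subset T_N^1(\mathfrak r)$ and $H_2\subset T_N^2(\mathfrak r)$, then compare via the bijection that replaces each switch by the corresponding change, with the derivative-containing good sequences forming an $O(N^{k-1})$ remainder. That part is fine.

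The gap is in your last paragraph, and it comes from misreading the change operator. The $d_{i_j}$ appearing in the change $\theta(d_{i_j}-C_{i_j,l})$ is \emph{not} the partial derivative $\partial_{i_j}$; it is the shift $x_{i_j}^a\mapsto x_{i_j}^{a-1}$ (see the computation of $g_1=d_ig$ in the proof of \Cref{operatorpoly}). With that in hand the case analysis closes exactly: if $\deg_{x_l}q=0$ and $\deg_{x_{i_j}}q=a\geq 1$, the switch applied to $q$ expands as $\theta\sum_{t=0}^{a-1}x_{i_j}^{\,t}x_l^{\,a-1-t}\cdot(q/x_{i_j}^a)$, whose unique $x_l$-free term is $\theta x_{i_j}^{a-1}\cdot(q/x_{i_j}^a)=\theta d_{i_j}(q)$, which equals the change on $q$ for \emph{every} $a\geq 1$, not just $a=1$. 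The remaining two cases (both $x_l$ and $x_{i_j}$ present, or $\deg_{x_{i_j}}q=0$) are handled the same way, and the paper checks them in \Cref{claim:bijval}. So the bijection $H_1\to H_2$ preserves $r(s)_k$ exactly; there is no ``factor depending on $a$'' and no degree-$\geq 2$ exceptional family. The three-way cancellation you sketch is unnecessary, and the only leftover is the derivative family you already bounded.
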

\begin{proof}
In \pref{eq:diffoperators}, $T_{N,\theta_N}^1(\mathfrak{r})$ and $T_{N,\theta_N}^2(\mathfrak{r})$ correspond to the left and right hand side, respectively. However, the sequences in $T_{N,\theta_N}^1(\mathfrak{r})$ can have derivatives, but the sequences in $T_{N,\theta_N}^2(\mathfrak{r})$ cannot. Then, let $T_{N,\theta_N}^3(\mathfrak{r})$ be the set of $s\in T_{N,\theta_N}^1(\mathfrak{r})$ which do not contain derivatives.

Suppose that $H$ is the set of $s$ in $T_{N,\theta_N}^1(\mathfrak{r})$ satisfying the \Cref{remainder2} conditions. From the Lemma,
\[ 
    [1]\mathcal{D}^{\theta_N}_{\mathfrak{r}}(F(x_1,\ldots,x_N))=\sum_{s\in H} r(s)_k + R_1,
\]
where $R_1$ is a homogeneous polynomial in the $c_F^\nu$ with degree $k$ and order $N^{k(1-c)-1}$. Consider $H_1=H\cap T_{N,\theta_N}^3(\mathfrak{r})$. We see that $H_1$ is the set of $s\in T_{N,\theta_N}^1(\mathfrak{r})$ satisfying the conditions of \Cref{remainder2} with only switches and term multiplications, and
\[
    \sum_{s\in H} r(s)_k = \sum_{s\in H_1} r(s)_k + R_2, R_2= \sum_{s\in H\backslash T_{N,\theta_N}^3(\mathfrak{r})} r(s)_k.
\]
With this,
\[
[1]\mathcal{D}^{\theta_N}_{\mathfrak{r}}(F(x_1,\ldots,x_N))=\sum_{s\in H_1} r(s)_k + R_1+R_2.
\]

Also, let $H_2$ be the set of $s\in T_{N,\theta_N}^2(\mathfrak{r})$ satisfying the \Cref{newremainder2} conditions. By the lemma, 
\[
    [1]\mathcal{Q}_{\mathfrak{r}}^N(F(x_1,\ldots,x_N))=\sum_{s\in H_2} r(s)_k+R_3,
\]
for a homogeneous polynomial $R_3$ in the $c_F^\nu$ with degree $k$ and order $N^{k(1-c)-1}$.

\begin{claim}
\label{claim:r2order}
$R_2$ is a homogeneous polynomial in the $c_F^\nu$ with degree $k$ and order $N^{k(1-c)-(1-c)}$.
\end{claim}
\begin{proof}
Suppose 
\[
    p=\prod_{i=1}^m c_F^{\nu_i}.
\] 
Let the set of $s\in H\backslash T_{N,\theta_N}^3(\mathfrak{r})$, the $s\in H$ which have a derivative, such that $r(s)_k$ is $p$ multiplied by a nonzero integer be $T$. Let $T_l$ be the set of $s\in T$ that have $l$ switches, $0\leq l\leq k-m$. Also, for $s\in T$, let $D(s)$ be the set of $x_i$ such that $i\notin\mathfrak{r}$ which are degree-altered by term multiplications of $s$.

Suppose $s\in T_l$. Note that the derivatives of $s$ only appear in $\gamma(s)$ operators. Suppose $|D(s)|=d\leq l$. We know that in the $\gamma(s)$ operators, we have $l-d$ switches and $k-m-l$ derivatives. Since the $\gamma(s)$ operators must have at least $1$ derivative, we see that $0\leq l<k-m$. Particularly, $|T_l|$ is $O(N^l)$ for $0\leq l<k-m$ and $|T_l|=0$ for $l\geq k-m$, after using the method of \Cref{remainder2}. Since $|C(s)|$ is $O(N^{-lc})$ for $s\in T_l$ by \Cref{prop:boundedconst},
\[
\bigg|\sum_{s\in T_l} C(s)\bigg|
\]
is $O(N^{l(1-c)})$. The absolute value of the coefficient of $p$ is at most
\[
\sum_{l=0}^{k-m-1}\bigg|\sum_{s\in T_l} C(s)\bigg|,
\]
and is therefore $O(N^{(k-m)(1-c)-(1-c)})$.
\end{proof}

For a sequence $s$ in $H_1$, let $c(s)$ be the sequence in $H_2$ such that for $1\leq j\leq k$, if $s_{j}$ is the switch from $i_j$ to $i$, then $c(s)_j$ is the change from $i_j$ to $i$, and $c(s)_j=s_j$ if $s_j$ is not a switch. We have that $s\in H_1$ have switches and term multiplications, while $s\in H_2$ have changes and term multiplications. Clearly, $c:H_1\rightarrow H_2$ is a bijection.

\begin{claim}
\label{claim:bijval}
For all $s\in H_1$, $r(s)_k = r(c(s))_k$. 
\end{claim}
\begin{proof}
Consider $s\in T_{N,\theta_N}(\mathfrak{r})$ with term multiplications, switches, and changes such that:
\begin{itemize}
    \item For $1\leq j\leq k$, if $s_j$ is the switch or change from $i_j$ to $i$, $i\notin \mathfrak{r}$.
    \item There do not exist integers $i$, $j_1$, and $j_2$, $1\leq i\leq N$, $1\leq j_1, j_2\leq k$, $j_1\not=j_2$, such that $s_{j_1}$ is the switch or change from $i_{j_1}$ to $i$ and $s_{j_2}$ is the switch or change from $i_{j_2}$ to $i$.
\end{itemize} 
Suppose that $s_j$, $1\leq j\leq k$ is a switch from $i_j$ to $i$ and $s_{j'}$ for $j<j'\leq k$ are not changes. It suffices to show that if we convert $s_j$ to the change from $i_j$ to $i$, $r(s)_k$ is the same. Then, for $s\in H_1$, if the switches of $s$ are $s_{j_1}, s_{j_2}, \ldots, s_{j_r}$, $1\leq j_1<\cdots<j_r\leq k$, we could convert $s_{j_u}$ to a change from $u=1$ to $r$ and get $c(s)$. This would imply $r(c(s))_k=r(s)_k$.

Suppose $q$ is a term of $r(s)_{j-1}$. We look at $s_j(q)$, which is a sum of terms or $0$. Suppose $q'$ is a term of $s_j(q)$ and the degree of $x_i$ in $q'$ is at least $1$. For $j< j'\leq k$, $s_{j'}$ is the switch from $i_{j'}$ to $i'\not=i$ or a term multiplication. In $r(s)_k$, the degree of $x_i$ must be $0$, but to decrease the degree of $x_i$ of a term, some $s_{j'}$ for $j'>j$ will convert the term to $0$. Therefore, $q'$ will not contribute to $r(s)_k$. So, the only term of $s_j(q)$ which will contribute to $r(s)_k$ will be $q'$ with degree of $x_i$ equal to $0$, and since $s_j$ is the switch from $i_j$ to $i$, in $s_j(q)$, there will be at most one such $q'$. With this, we can set $s_j(q)$ as $q'$ if such $q'$ exists and $s_j(q)=0$ if not.

Assume the degree of $x_i$ and $x_{i_j}$ are both at least $1$ in $q$. Then, in $s_j(q)$, the degree in $x_i$ of all terms will be at least $1$, so $q$ does not contribute to $r(s)_k$. Now, replace $s_j$ with the change from $i_j$ to $i$. Note that in $\theta_N(d_{i_j}-C_{i_j,i})(q)=\theta_N d_{i_j}(q)$, the degree of $x_i$ is at least $1$, and because of this, using the logic from above, after replacing $s_j$ there are also no contributions to $r(s)_k$. Therefore, in this case, $r(s)_k$ is unchanged. 

Next, suppose the degree of $x_i$ is $0$ and $x_{i_j}$ is at least $1$ in $q$. In this case, $q'$ exists and will be $\theta_N d_{i_j}(q)$. But, since the degree of $x_{i_j}$ is at least $1$,
\[
    \theta_N(d_{i_j}-C_{i_j,i})(q) = \theta_N d_{i_j}(q) = q'.
\]
Therefore, we can replace $s_j$ with $\theta(d_{i_j}-C_{i_j,i})$ and $r(s)_k$ will be the same.

Consider when the degree of $x_{i_j}$ in $q$ is $0$. If the degree of $x_i$ is at least $1$ in $q$, then $q'$ exists and is equal to $-\theta_N C_{i_j,i}(q)$. Otherwise, if the degree of $x_i$ is $0$, $s_j(q)=0=-\theta_N C_{i_j,i}(q)$. However, since the degree of $x_{i_j}$ in $q$ is $0$,
\[
    \theta_N(d_{i_j}-C_{i_j,i})(q)=-\theta_N C_{{i_j},i}(q)=q'.
\]
Therefore, after replacing $s_j$ with $\theta_N(d_{i_j}-C_{i_j,i})$, $r(s)_k$ is unchanged here as well.
\end{proof}

\begin{remark}
\label{remark:replace}
Note that the proof of the previous claim has similarities with the argument that appears in step 3 of the proof of Proposition 5.5 of \cite{matrix} that justifies replacing a switch from $i_j$ to $i$ with $d_i$. In contrast, we replace a switch from $i_j$ to $i$ with a change from $i_j$ to $i$. The reason for this is that there may be terms that contain $x_i$ prior to the application of the switch from $i_j$ to $i$ due to the nonzero limits of coefficients of terms with multiple indices, which requires the usage of the $C_{i_j,i}$ operators. To address this difference, we analyze the $\mathcal{Q}_\mathfrak{r}^N$ operators in \Cref{sec:formal}.
\end{remark}

Since $c:H_1\rightarrow H_2$ is a bijection, with \Cref{claim:bijval}, $\sum_{s\in H_1} r(s)_k = \sum_{s\in H_2} r(s)_k$. Therefore,
\[
[1]\mathcal{D}^{\theta_N}_{\mathfrak{r}}(F(x_1,\ldots,x_N))=[1]\mathcal{Q}_{\mathfrak{r}}^N(F(x_1,\ldots,x_N))+R_1+R_2-R_3.
\]
However, $R_1$ and $R_3$ have order $N^{k(1-c)-1}$, while $R_2$ has order $N^{k(1-c)-(1-c)}$. Then, the order of $R=R_1+R_2-R_3$ is $N^{k(1-c)-1+\max(c,0)}$. We are done.
\end{proof}

\begin{remark}
The main difference between the regime $c<1$ that we consider and the regime $c=1$ arises from the order of the derivative operator. As implied by \Cref{claim:r2order}, when $c<1$, we can disregard the derivative operators entirely because they are of lower order.
\end{remark}

For the definition of $\mathcal{R}_{N-i+1,N-i+1}^{\lambda_i}$, see \pref{eq:multoperators}.

\begin{theorem}
\label{remainder4}
Suppose that $\lambda$ is a partition with $\ell(\lambda)=m$ and $|\lambda|=k$. Suppose $N\geq m$. For $1\leq i\leq N$, let $\vec{X}_i=(x_1,\ldots,x_i, 0,\ldots,0)$ be $\vec{x}$ with $x_j$ replaced by $0$ for $i+1\leq j\leq N$. Then, where $\mathfrak{r}$ is the sequence of indices from \Cref{remainder1},
\begin{align*}
& [1] \mathcal{Q}_{\mathfrak{r}}^N(F(x_1,\ldots,x_N)) = [1]N^{k(1-c)}\prod_{i=1}^m \mathcal{Q}_{i}^N\left(\frac{\partial_i F}{N^{1-c}}\right)^{\lambda_i}(1) \\
& = [1]\prod_{i=1}^m\left((N-i+1)^{\lambda_i(1-c)}\mathcal{R}_{N-i+1,N-i+1}^{\lambda_i}\left(\frac{\partial_{N-i+1} F(\vec{X}_{N-i+1})}{(N-i+1)^{1-c}}\right)\right)(1)+R
\end{align*}
for a homogeneous polynomial $R$ in the $c_F^\nu$ with degree $k$ and order $N^{k(1-c)-1}$.
\end{theorem}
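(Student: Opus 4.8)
The plan is to rewrite both sides as essentially the same sum over ``nice'' operator sequences, in the spirit of the proof of \Cref{remainder3}, and to push every discrepancy into $R$.

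First I would align the reference indices. Since $F$ is symmetric and $[1]$ is invariant under relabeling of the variables, conjugating the whole product of operators by the reversal permutation $x_\ell\mapsto x_{N-\ell+1}$ (whose prototype is the second identity of \Cref{prop:commutativeswitch}) replaces the left-hand side by $[1]\mathcal{Q}_{\mathfrak{r}'}^N(F)$, where $\mathfrak{r}'$ is the index sequence whose first $\lambda_1$ entries equal $N$, next $\lambda_2$ equal $N-1$, and so on down to the last $\lambda_m$ entries equal to $N-m+1$. Now both sides run through the reference indices $N,N-1,\dots,N-m+1$ in their respective $i$-th blocks: the left side keeps all $N$ variables throughout (operators $\mathcal{Q}_\bullet^N$), whereas the right side works over $x_1,\dots,x_{N-i+1}$ in block $i$, deletes $x_{N-i+1}$ at the end of that block, and uses $\partial_{N-i+1}F(\vec{X}_{N-i+1})$ in place of $\partial_{N-i+1}F$.

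Next I would reduce both sides to nice sequences. On the left, \Cref{newremainder2} gives $[1]\mathcal{Q}_{\mathfrak{r}'}^N(F)=\sum_{s\in H_1}r(s)_k+R_1$ with $R_1$ homogeneous of degree $k$ and order $N^{k-1}$, where $H_1$ is the set of $s\in T_N^2(\mathfrak{r}')$ whose changes all target indices outside $\mathfrak{r}'=\{N-m+1,\dots,N\}$ with no two changes sharing a target (so block-$i$ changes of such $s$ land in $\{1,\dots,N-m\}$). Expanding the right-hand side via $\mathcal{R}_{i,j}^k(f)g=\mathcal{Q}_i^j(f)^k g|_{x_i=0}$ and cancelling the powers of $N-i+1$ — so block $i$ on the right is built from $\theta\sum_{j=1}^{N-i}(d_{N-i+1}-C_{N-i+1,j})+\partial_{N-i+1}F(\vec{X}_{N-i+1})$, differing from the left only in the range of the change-sum and in $F(\vec{X})$ versus $F$ — writes the right-hand side as $\sum_s v(s)$ over sequences whose $i$-th block has reference index $N-i+1$, changes targeting $\{1,\dots,N-i\}$, and term multiplications by terms of $\partial_{N-i+1}F(\vec{X}_{N-i+1})$, with $v(s)$ computed by imposing $x_{N-i+1}=0$ after block $i$ and then $[1]$. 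Rerunning the count from the proof of \Cref{newremainder2} in this staircase setting — a change into one of the at most $m$ future reference indices, or a collision between two changes, costs a factor of $N$, while $|v(s)|$ is bounded uniformly as in \Cref{prop:boundedconst} — shows that discarding the sequences with such a change or a collision alters $\sum_s v(s)$ only by a homogeneous degree-$k$ polynomial of order $N^{k-1}$. Hence it remains to match $\sum_{s\in H_1}r(s)_k$ with the sum of $v(s)$ over the right-side sequences whose changes target $\{1,\dots,N-m\}$ without collisions.

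These two sums are in fact equal. For any such sequence, once a variable $x_p$ with $p\in\{N-i+1,\dots,N\}$ acquires positive degree within or before block $i$, no later operator can lower its degree — no subsequent change targets $x_p$ (changes avoid $\{N-m+1,\dots,N\}$), no subsequent $d_\bullet$ acts on it, and subsequent term multiplications in block $i'$ involve only $x_1,\dots,x_{N-i'+1}$ with $N-i'+1<p$. Since each $r(s)_j$ is homogeneous in the $x$'s by \Cref{sequences}, this forces, for a sequence with $r(s)_k\neq 0$ of degree $0$: (i) its block-$i$ term multiplications cannot involve $x_{N-i+2},\dots,x_N$, so they are exactly multiplications by terms of $\partial_{N-i+1}F(\vec{X}_{N-i+1})$; and (ii) every monomial killed by the substitution $x_{N-i+1}=0$ after block $i$ was going to contribute $0$ to the constant term anyway, so the substitutions are no-ops and $v(s)=r(s)_k$. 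Therefore $H_1$ is exactly the set of contributing right-side sequences with changes in $\{1,\dots,N-m\}$ and no collisions, the non-contributing such sequences have $v(s)=0$, and the $r$-values agree termwise. So the right-hand side equals $\sum_{s\in H_1}r(s)_k$ up to order $N^{k-1}$, hence equals $[1]\mathcal{Q}_{\mathfrak{r}'}^N(F)$ up to order $N^{k-1}$; taking $R$ to be the difference of the two sides — a homogeneous polynomial of degree $k$ in the $c_F^\nu$ by \Cref{polynomial}, of order $N^{k-1}$ — finishes the proof. The main obstacle I expect is step (ii): making rigorous that the intermediate substitutions $x_{N-i+1}=0$ in the $\mathcal{R}$-operators can be inserted into or removed from the $\mathcal{Q}_{\mathfrak{r}'}^N$-expansion without changing the constant term, which rests on the monotonicity-of-degree observation together with the homogeneity of \Cref{sequences}, and on adapting the \Cref{newremainder2} counting to the shrinking-variable setting.
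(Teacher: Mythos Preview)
Your proposal is correct and follows essentially the same route as the paper: after the symmetry relabeling to indices $N,\dots,N-m+1$, both you and the paper reduce via \Cref{newremainder2} to ``nice'' change sequences, identify the right-hand side as a sum over sequences with the block-wise restrictions, and use the homogeneity from \Cref{sequences} to argue that the intermediate substitutions $x_{N-i+1}=0$ are no-ops on the constant term (the paper formalizes this as $r_d(s)_k$ being independent of $d$). The only cosmetic difference is that the paper first packages the right side as a set $T$ and shows $T'\subset T$, whereas you reduce both sides to nice sequences separately and then match; your step (ii) is exactly the paper's key claim, and your phrasing ``$p\in\{N-i+1,\dots,N\}$ \dots\ within or before block $i$'' should be split into $p\geq N-i+2$ (anywhere in block $i$) versus $p=N-i+1$ (after block $i$), since $d_{N-i+1}$ can lower the degree of $x_{N-i+1}$ within its own block.
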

\begin{proof}
By symmetry,
\[
[1]N^{k(1-c)}\prod_{i=1}^m \mathcal{Q}_{i}^N\left(\frac{\partial_i F}{N^{1-c}}\right)^{\lambda_i}(1)
=[1]N^{k(1-c)}\prod_{i=1}^m \mathcal{Q}_{N-i+1}^N\left(\frac{\partial_{N-i+1} F}{N^{1-c}}\right)^{\lambda_i}(1).
\]
Here, $\mathfrak{r}=\{i_j\}_{1\leq j\leq k}$ consists of $k$ indices, the first $\lambda_1$ being $N$, the next $\lambda_2$ being $N-1$, and so forth, until the last $\lambda_m$ are $N-m+1$. Let $T$ be the set of sequences $s$ with indices $\mathfrak{r}$ over $x_1,\ldots,x_N$ consisting of term multiplications and changes such that:
\begin{itemize}
    \item For $1\leq j\leq m-1$, after $\lambda_1+\cdots+\lambda_j$ operators, the next $\lambda_{j+1}$ operators cannot have changes from $N-j$ to $i$ and term multiplications which degree-alter $x_i$, $N-j+1\leq i\leq N$.
    \item $r(s)_k$ is $0$ or has degree $0$ in the $x_i$.
\end{itemize}

Suppose $0\leq d\leq m$. For a sequence $s$ of $k$ term multiplications and changes, let $r_d(s)_j$ for $0\leq j\leq k$ be defined as follows. Set $r_d(s)_0=1$, and for $1\leq j\leq k$, $j\not=\lambda_1+\cdots+\lambda_i$, $1\leq i\leq d$, let $r_d(s)_j=s_j(r_d(s)_{j-1})$. Also, for $1\leq i\leq d$, let \[r_d(s)_{\lambda_1+\cdots+\lambda_i}=(s_{\lambda_1+\cdots+\lambda_i}(r_d(s)_{\lambda_1+\cdots+\lambda_i-1}))|_{x_{N-i+1}=0}.\]
Note that $r_0(s)_j=r(s)_j$, $0\leq j\leq k$.

\begin{claim}
\label{claim:samedeg}
For $0\leq j\leq k$ and any sequence $s$ of $k$ term multiplications and changes, there exists a positive integer $n$ such that for $0\leq d\leq m$, $r_d(s)_j$ is homogeneous in $x_1, \ldots, x_N$ with degree $n$ or $r_d(s)_j=0$.
\end{claim}

\begin{proof}
We can prove this using induction from $j=0$ to $k$. If we apply a change or a term multiplication, then we remove a term or change the degree of each term by the same constant. Furthermore, if we set $x_{N-i+1}=0$ for some $i\in [d]$, then we remove terms.
\end{proof}

\begin{claim}
\label{claim:equalend}
For all $s\in T$, $r_d(s)_k$ is the same for $0\leq d\leq m$.
\end{claim}

\begin{proof}
To show this, we prove that $r_d(s)_k=r_{d-1}(s)_k$, $1\leq d\leq m$. Observe that $r_d(s)_j=r_{d-1}(s)_j$ for $1\leq j\leq \lambda_1+\cdots+\lambda_d-1$ and
\[
r_{d-1}(s)_k-r_d(s)_k = s_k\circ\cdots\circ s_{\lambda_1+\cdots+\lambda_d+1}(r_{d-1}(s)_{\lambda_1+\cdots+\lambda_d}-r_{d-1}(s)_{\lambda_1+\cdots+\lambda_d}|_{x_{N-d+1}=0}),
\]
where each term of $r_{d-1}(s)_{\lambda_1+\cdots+\lambda_d}-r_{d-1}(s)_{\lambda_1+\cdots+\lambda_d}|_{x_{N-d+1}=0}$ contains $x_{N-d+1}$. But, if $\lambda_1+\cdots+\lambda_d+1\leq j\leq k$, $i_j\not= N-d+1$ since the indices of $s$ are $\mathfrak{r}$ and $s_j$ is not a change from $i_j$ to $N-d+1$ since $s\in T$. Hence, $s_j$ cannot remove $x_{N-d+1}$ from a term that contains $x_{N-d+1}$ without causing the term to become $0$. Then, all nonzero terms of $r_{d-1}(s)_k-r_d(s)_k$ contain $x_{N-d+1}$. Since $s\in T$, $r_0(s)_k=r(s)_k$ has degree $0$ in the $x_i$, so $r_{d-1}(s)_k$ and $r_d(s)_k$, and hence $r_{d-1}(s)_k-r_d(s)_k$, have degree $0$ in the $x_i$ from \Cref{claim:samedeg}. But, all nonzero terms of $r_{d-1}(s)_k-r_d(s)_k$ contain $x_{N-d+1}$, and therefore, $r_{d-1}(s)_k-r_d(s)_k=0$.
\end{proof}
\begin{claim}
\[
[1]\prod_{i=1}^m \left((N-i+1)^{\lambda_i(1-c)}\mathcal{R}_{N-i+1,N-i+1}^{\lambda_i}\left(\frac{\partial_{N-i+1} F(\vec{X}_{N-i+1})}{(N-i+1)^{1-c}}\right)\right)(1) = \sum_{s\in T\cap T_{N,\theta_N}^2(\mathfrak{r})} r(s)_k.
\]
\end{claim}
\begin{proof} 
By the definition of $T$, the left hand side of the above expression is
\[
\sum_{s\in T} r_m(s)_k = \sum_{s\in T} r(s)_k = \sum_{s\in T\cap T_{N,\theta_N}^2(\mathfrak{r})} r(s)_k,
\]
since $r_m(s)_k=r(s)_k$ for $s\in T$ by \Cref{claim:equalend} and $r(s)_k\not=0$ only if $s\in T\cap T_{N,\theta_N}^2(\mathfrak{r})$. 
\end{proof}

Suppose that $T'$ is the set of $s\in T_{N,\theta_N}^2(\mathfrak{r})$ satisfying the \Cref{newremainder2} conditions. From the lemma,
\[
[1]N^{k(1-c)}\prod_{i=1}^m \mathcal{Q}_{N-i+1}^N\left(\frac{\partial_{N-i+1} F}{N^{1-c}}\right)^{\lambda_i}(1) = \sum_{s\in T'} r(s)_k + R_1
\]
for a homogeneous polynomial $R_1$ in the $c_F^\nu$ with degree $k$ and order $N^{k(1-c)-1}$.

\begin{claim}
The set $T'$ is a subset of $T$.
\end{claim}
\begin{proof}
It suffices to show that if $s\in T'$, for $d>\lambda_1+\cdots+\lambda_j$, $s_d$ is not a change from $i_d$ to $x_i$ or a term multiplication that degree-alters $x_i$, $N-j+1\leq i\leq N$. For the sake of contradiction, assume $s\in T'$ and $s_d$ contains $x_i$, $N-j+1\leq i\leq N$, for some $d>\lambda_1+\cdots+\lambda_j$. From the conditions of \Cref{newremainder2}, $x_i$ must be degree-altered by a term multiplication. After, $x_i$ will be in all terms of $r(s)_d$; note that in the term multiplication, we do not take the derivative with respect to $x_i$ because $d>\lambda_1+\cdots+\lambda_j$. But, $x_i$ cannot be removed from a term by an operator unless the operator converts the term to zero, a contradiction to $s\in T_{N,\theta_N}^2(\mathfrak{r})$.
\end{proof}

Hence, $T'\subset T\cap T_{N,\theta_N}^2(\mathfrak{r})$. Again, we consider the coefficient of 
\[
p=\prod_{i=1}^m c_{F}^{\nu_i}.
\]
Let $U$ and $U'$ be the set of $s$ in $T_{N,\theta_N}^2(\mathfrak{r})$ and $T'$, respectively, such that $r(s)_k=dp$, $d\not=0$. Then, $U\cap T$ will be the set of $s$ in $T\cap T_{N,\theta_N}^2(\mathfrak{r})$ such that $r(s)_k=dp$, $d\not=0$. 

Note that because $T'\subset T\cap T_{N,\theta_N}^2(\mathfrak{r})$, $U'\subset U\cap T$. Also, $U\cap T\backslash U'$ is the set of $s\in T\cap T^2_{N,\theta_N}\backslash T'$ such that $r(s)_k=dp$, $d\not=0$. However, the number of $s\in U\backslash U'$ with $l$ changes is $O(N^{l-1})$ from the proof of \Cref{remainder2} without derivatives and using changes instead of switches, as in the proof of \Cref{newremainder2}. Therefore, the number $s\in U\cap T\backslash U'$ with $l$ changes is $O(N^{l-1})$. Then, by following the previous argument, we get that the coefficient of $p$ from sequences $s\in T\cap T_{N,\theta_N}^2\backslash T'$ is $O(N^{(k-m)(1-c)-1})$. Thus,
\[
\sum_{s\in T\cap T_{N,\theta_N}^2(\mathfrak{r})} r(s)_k = \sum_{s\in T'} r(s)_k + R_2
\]
for a homogeneous polynomial $R_2$ in the $c_F^\nu$ with degree $k$ and order $N^{k(1-c)-1}$, completing the proof.
\end{proof}

\subsection{Proof of \texorpdfstring{\Cref{lln:generalization}}{}}
\label{subsec:llngeneralization}

Suppose $\mathfrak{r}$ is the sequence that is described in the statement of \Cref{remainder1}. The result implies that as $N\rightarrow\infty$,
\[ 
\lim_{N\rightarrow\infty} \left(\frac{1}{N^{m+|\lambda|(1-c)}}\sum_{l\in I_N(\lambda)}[1]\mathcal{D}^{\theta_N}_{l}(F_N(x_1,\ldots,x_N))\right) = \lim_{N\rightarrow\infty}\frac{1}{N^{|\lambda|(1-c)}}[1]\mathcal{D}^{\theta_N}_\mathfrak{r}(x_1,\ldots,x_N),
\]
using \pref{eq:remainderto0} with $k=|\lambda|(1-c)-1$ and $\epsilon=1$. Afterwards, from \Cref{remainder3}, \Cref{remainder4}, and \pref{eq:remainderto0},
\begin{align*}
& \lim_{N\rightarrow\infty}\frac{1}{N^{|\lambda|(1-c)}}[1]\mathcal{D}^{\theta_N}_\mathfrak{r}(x_1,\ldots,x_N) \\
& = \lim_{N\rightarrow\infty} [1]\prod_{i=1}^m\left( \mathcal{R}_{N-i+1,N-i+1}^{\lambda_i}\left(\frac{\partial_{N-i+1} F_N(\vec{X}_{N-i+1})}{(N-i+1)^{1-c}}\right)\right)(1).
\end{align*}

Recall the definition of $\vec{X}_i$ from the statement of \Cref{remainder4}. To evaluate this, note that because $F_N$ is symmetric,
\begin{equation}
\label{eq:switchderiv}
s_{1,N-i+1}\frac{\partial_{N-i+1} F_N(\vec{X}_{N-i+1})}{(N-i+1)^{1-c}} = \frac{\partial_1 F_N(\vec{X}_{N-i+1})}{(N-i+1)^{1-c}}
\end{equation}
and the coefficient of $x_1^dM_\nu(x_2,\ldots,x_{N-i+1})$ in \pref{eq:switchderiv} is $\frac{(d+1)c_{F_N}^{\nu+(d+1)}}{(N-i+1)^{1-c}}$ for $\nu\in P$ with $\ell(\nu)\leq N-i$. Here, we can have $\nu=0$. However,
\[
\lim_{N\rightarrow\infty} \frac{(d+1)c_{F_N}^{\nu+(d+1)}}{(N-i+1)^{1-c}} = (d+1)c_{\nu+(d+1)},
\]
and the limiting sequence of \pref{eq:switchderiv} with respect to $1$ for $1\leq i\leq m$ is $f=\{c^{d,\nu}\}_{d\geq 0, \nu\in P}$, where $c^{d,\nu}=(d+1)c_{\nu+(d+1)}$. From \Cref{finalvalue1} with $\lim_{N\rightarrow\infty} N^c\theta_N=\theta$,
\begin{align*}
\lim_{N\rightarrow\infty} [1]\prod_{i=1}^m\left( \mathcal{R}_{N-i+1,N-i+1}^{\lambda_i}\left(\frac{\partial_{N-i+1} F_N(\vec{X}_{N-i+1})}{(N-i+1)^{1-c}}\right)\right)(1) =\prod_{i=1}^m\left(\sum_{\substack{\pi\in NC(\lambda_i)}} \prod_{B\in\pi} c_{|B|}(f)\right).
\end{align*}
Note that we have used the fact that when $g=1$, $c_1(g)=1$ and $c_k(g)=0$ for $k\geq 2$.

Suppose that for $\nu\in P$ and $d\geq 1$, $N_d(\nu)$ of the components of $\nu$ are $d$. Then, if $\nu'+(d)=\nu$, $\ell(\nu)P(\nu')=N_d(\nu)P(\nu)$, and for $l\geq 1$,
\begin{align*}
    c_l(f) & = \theta^{l-1}\sum_{\substack{\nu'\in P, d\geq 0,\\ |\nu'|+d=l-1}} (-1)^{\ell(\nu')}P(\nu')(d+1)c_{\nu'+(d+1)} \\
    & = \theta^{l-1} \sum_{\nu\in P, |\nu|=l} (-1)^{\ell(\nu)-1}c_\nu\sum_{\substack{\nu'\in P, d\geq 1,\\ \nu'+(d)=\nu}}dP(\nu')\\
    & = \theta^{l-1} \sum_{\nu\in P, |\nu|=l} (-1)^{\ell(\nu)-1}\frac{P(\nu)c_\nu}{\ell(\nu)}\sum_{\substack{\nu'\in P, d\geq 1,\\ \nu'+(d)=\nu}}dN_d(\nu) \\
    & = \theta^{l-1} \sum_{\nu\in P, |\nu|=l} (-1)^{\ell(\nu)-1} \frac{|\nu|P(\nu)}{\ell(\nu)}c_\nu,
\end{align*}
as required.

\subsection{Example} For positive integers $k$, let $n_k$ be the number of distinct $\nu\in P$ with $|\nu|=k$. Suppose $\theta\in\mathbb{C}$ and $c$ is a real number with $c<1$. For $N\geq 1$, let $\theta_N=\frac{\theta}{N^c}$. Also, suppose
\[
F_N(x_1,\ldots,x_N)=\sum_{\nu\in P, \ell(\nu)\leq N} c_{F_N}^\nu M_{\nu}(x_1,\ldots,x_N)
\]
such that $c_{F_N}^0=0$ and
\[
c_{F_N}^\nu=\frac{(-1)^{\ell(\nu)-1}\ell(\nu)}{|\nu|P(\nu)\theta^{|\nu|-1}}N^{1-c}
\]
for all $\nu\in P^+$. From \Cref{lln:generalization}, for $\lambda\in P^+$,
\[
    \lim_{N\rightarrow\infty} \left(\frac{1}{N^{\ell(\lambda)+|\lambda|(1-c)}}\sum_{l\in I_N(\lambda)}[1]\mathcal{D}^{\theta_N}_l(F_N(x_1,\ldots,x_N))\right) \\
    =  \prod_{i=1}^{\ell(\lambda)}\left(\sum_{\pi\in NC(\lambda_i)} \prod_{B\in\pi} n_{|B|}\right),
\]
where $n_k$ is the number of permutations of size $k$ for $k\geq 1$.

\section{Probability Measures}
\label{sec:measures}

\subsection{Bessel generating functions}
Next, we define the multivariate Bessel function; the definition is based on \cite{dunklbessel}*{Section 6}, where the uniqueness of the $\text{Exp}$ function is proven.

\begin{definition}
\label{def:multivariate}
For $\theta\in\mathbb{C}$ and $a_1,\ldots,a_N,x_1,\ldots,x_N\in\mathbb{C}$ such that the $a_i$ are not all zero, the function $\text{Exp}_{(a_1,\ldots,a_N)}(x_1,\ldots,x_N;\theta):\mathbb{C}^N\times\mathbb{C}^N\times \mathbb{C}\rightarrow\mathbb{C}$ is the unique meromorphic function such that for all $(a_1,\ldots,a_N)\in\mathbb{C}^N$ and $\theta\in\mathbb{C}$, $\mathcal{D}_i^\theta\text{Exp}_{(a_1,\ldots,a_N)}(\cdot; \theta) = a_i\text{Exp}_{(a_1,\ldots,a_N)}(\cdot; \theta)$ for $1\leq i\leq N$ and $\text{Exp}_{(a_1,\ldots,a_N)}(0; \theta)=1$. The \textit{multivariate Bessel function} $B_{(a_1,\ldots,a_N)}(x_1,\ldots,x_N; \theta):\mathbb{C}^N\times\mathbb{C}^N\times\mathbb{C}\rightarrow\mathbb{C}$ is defined as 
\[
B_{(a_1,\ldots,a_N)}(x_1,\ldots,x_N; \theta)\triangleq \frac{1}{N!}\sum_{\sigma\in \mathcal{S}_N} \text{Exp}_{(a_1,\ldots,a_N)}(x_{\sigma(1)},\ldots,x_{\sigma(N)}; \theta),
\]
where $\mathcal{S}_N$ is the set of permutations of $[N]$.
\end{definition}

The multivariate Bessel function equals one when $(x_1,\ldots,x_N)=0$ and is symmetric in both $(a_1,\ldots,a_N)$ and $(x_1,\ldots,x_N)$, see Proposition 6.8 of the paper \cite{dunklbessel}. Furthermore, although the multivariate Bessel function is meromorphic for all $\theta\in\mathbb{C}$, we only consider when $\real(\theta)\geq 0$ because the multivariate Bessel function is holomorphic in that domain, see Proposition 6.7 of the paper.

The following result is well-known and is true for all $\theta\in\mathbb{C}$; it is straightforward to prove using \Cref{def:multivariate}.

\begin{proposition}
\label{prop:operator}
For any symmetric polynomial $F(x_1, \ldots, x_N)$ and complex numbers $a_1,\ldots,$ $a_N$, $F(\mathcal{D}^{\theta}_1, \ldots, \mathcal{D}^{\theta}_N)B_{(a_1, \ldots, a_N)}(\cdot; \theta) = F(a_1, \ldots, a_N)B_{(a_1, \ldots, a_N)}(\cdot; \theta)$.
\end{proposition}

\begin{remark}
It is not immediate that the multivariate Bessel function is the only solution of the system of partial differential equations in \Cref{prop:operator}. This is established when $\theta$ has certain properties in \cite{dunklbessel}*{Section 5}.
\end{remark}

\begin{definition}
\label{def:BGF}
The \textit{Bessel generating function} of $\mu\in\mathcal{M}_N$ is defined as
\[
    G_\theta(\cdot; \mu) \triangleq \mathbb{E}_{(a_1,\ldots,a_N)\sim\mu}[B_{(a_1,\ldots, a_N)}(\cdot; \theta)].
\]
\end{definition}

The Bessel generating function is also studied in \cites{dunklbound,betaprocess,gaussianfluctuation,airy,matrix,rectangularmatrix}. A particular example that we consider is the Bessel generating function of the $\beta$-Hermite ensemble, see \Cref{sec:eigenvalue,sec:coeff}.

From \Cref{lemma:convergence}, if $\mu\in\mathcal{M}_N$ is exponentially decaying, then the Bessel generating function of $\mu$ converges in a neighborhood of the origin in $\mathbb{C}^N$. Additionally, from \Cref{prop:operator},
\[
\mathcal{P}^{\theta}_kB_{(a_1, \ldots, a_N)}(\cdot; \theta) = \left(\sum_{i=1}^Na_i^k\right)B_{(a_1, \ldots, a_N)}(\cdot; \theta),
\]
which leads to the following proposition.

\begin{proposition}[\cite{matrix}*{Proposition 2.11}]
\label{prop:expectedvalue}
For a positive integer $s$, let $k_1, \ldots, k_s$ be positive integers. Suppose $\mu\in\mathcal{M}_N$ is exponentially decaying. Then,
\[
\left(\prod_{i=1}^s \mathcal{P}^{\theta}_{k_i}\right) G_\theta(x_1,\ldots, x_N; \mu)\bigg|_{x_i=0, 1\leq i\leq N} = \mathbb{E}_{(a_1,\ldots,a_N)\sim \mu}\left(\prod_{i=1}^s\left(\sum_{j=1}^N a_j^{k_i}\right)\right).
\]
\end{proposition}

\begin{remark}
\label{remark:thetazero}
Note that the paper \cite{matrix} proves \Cref{lemma:convergence} and \Cref{prop:expectedvalue} for when $\theta>0$ using the integral formulation of the multivariate Bessel function as the expectation of a function over a probability distribution over the Gelfand-Tsetlin patterns. For the regime $\real(\theta)\geq 0$, we can prove the results using the same method as the paper as well as the inequality
\begin{equation}
\label{eq:bessel}
\begin{split}
|\text{Exp}_{(a_1,\ldots,a_N)}(x_1,\ldots,x_N; \theta)| & \leq \sqrt{N!} \exp\left(\max_{\sigma\in \mathcal{S}_N}\left(\sum_{i=1}^N \real(a_ix_{\sigma(i)})\right)\right) \\
& \leq \sqrt{N!}\exp(|(a_1,\ldots,a_N)|\cdot|(x_1,\ldots,x_N)|)
\end{split}
\end{equation}
for all $a_1,\ldots,a_N,x_1,\ldots,x_N\in\mathbb{C}$ from \cite{dunklbound}*{Corollary 3.2}. The required bounds on the derivatives of the multivariate Bessel function for the proof of \Cref{prop:expectedvalue} follow from \cite{dunklbound}*{Lemma 3.5}.

Additionally, note that the integral formulation of the multivariate Bessel function for $\theta>0$ first appears in \cite{besselformula}; it is also discussed in \cites{crystalrandommat,betaprocess,rectangularmatrix}. Since we do not use the exact formula, we do not state it in this paper. 
\end{remark}

From \Cref{prop:expectedvalue}, for exponentially decaying $\mu\in\mathcal{M}_N$ and $\lambda\in P^+$,
\begin{equation}
\label{eq:llnev}
\begin{split}
\mathbb{E}_{\mu} \left(\prod_{i=1}^{\ell(\lambda)} p_{\lambda_i}^{N,c}\right) & =\mathbb{E}_{\mu}\left(\prod_{i=1}^{\ell(\lambda)}\left(\frac{1}{N}\sum_{j=1}^N \left(\frac{a_j}{N^{1-c}}\right)^{\lambda_i}\right)\right) \\
& = \frac{1}{N^{\ell(\lambda)+|\lambda|(1-c)}}\left(\prod_{i=1}^{\ell(\lambda)} \mathcal{P}^{\theta}_{\lambda_i}\right) G_\theta(x_1,\ldots, x_N; \mu)\bigg|_{x_i=0, 1\leq i\leq N}.
\end{split}
\end{equation}
A step for showing that $\{\mu_N\}_{N\geq 1}$ satisfy a $c$-LLN is to use \pref{eq:llnev} with $\mu=\mu_N$ and $\theta=\theta_N$, see \Cref{subsec:mainproof}.

\begin{proposition}[\cite{matrix}*{Lemma 5.2}]
\label{prop:equalformal}
Suppose that $F$ is a $(k+1)$-times continuously differentiable function in a neighborhood of $(0,\ldots,0)\in\mathbb{C}^N$, with Taylor series expansion 
\[ 
F(x_1,\ldots,x_N)=\sum_{\nu\in P, |\nu|\leq k, \ell(\nu)\leq N} c_F^\nu M_{\nu}(\vec{x}) + O(\norm{x}^{k+1}).
\]
Then, if \[\tilde{F}(x_1,\ldots,x_N)=\sum_{\nu\in P, |\nu|\leq k, \ell(\nu)\leq N} c_F^\nu M_{\nu}(x_1,\ldots,x_N),\]
for $\lambda=(\lambda_1, \ldots, \lambda_m)$ with $|\lambda|=k$,
\[\left(\prod_{i=1}^m\mathcal{P}^\theta_{\lambda_i}\right)\exp(F(x_1,\ldots,x_N))\bigg|_{x_i=0, 1\leq i\leq N} = \left(\prod_{i=1}^m\mathcal{P}^\theta_{\lambda_i}\right)\exp(\tilde{F}(x_1,\ldots,x_N))\bigg|_{x_i=0, 1\leq i\leq N}.\]
\end{proposition}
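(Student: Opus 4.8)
The plan is to reduce the identity to a single statement: the operator $L := \prod_{i=1}^m \mathcal{P}_{\lambda_i}$, applied to a function and evaluated at the origin, depends only on the degree-$k$ Taylor jet of that function at the origin, where $k = |\lambda|$. Granting this, since $F - \tilde{F} = O(\norm{x}^{k+1})$ by hypothesis, it suffices to check that $\exp(F)$ and $\exp(\tilde{F})$ have the same $k$-jet at $0$; then $L\exp(F)|_{x_i = 0} = L\exp(\tilde{F})|_{x_i = 0}$, as desired. There is no well-definedness concern on the right-hand side, since $\exp(\tilde{F})$ is an entire function, being a polynomial composed with $\exp$.

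For the jet agreement, set $G := \exp(F) - \exp(\tilde{F}) = \exp(\tilde{F})\bigl(\exp(F - \tilde{F}) - 1\bigr)$. Because $F$ is $C^{k+1}$ near $0$ and $\tilde{F}$ is a polynomial, $G$ is $C^{k+1}$ on some ball $B$ about the origin; and because $F - \tilde{F} = O(\norm{x}^{k+1})$, we also have $\exp(F - \tilde{F}) - 1 = O(\norm{x}^{k+1})$ and hence $G = O(\norm{x}^{k+1})$. Thus every partial derivative of $G$ of order at most $k$ vanishes at $0$.

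The core step is to prove $L\,G|_{x_i = 0,\, 1 \le i \le N} = 0$ whenever $G$ is $C^{k+1}$ on a ball about $0$ with vanishing $k$-jet at the origin. The mechanism is that each Dunkl operator $\mathcal{D}_j = \partial_j + \theta\sum_{l \ne j}\frac{1}{x_j - x_l}(1 - s_{j,l})$ lowers the order of vanishing by exactly one: if $g$ is $C^{p+1}$ on a ball about $0$ with all partials of order $\le p$ vanishing at $0$, then $\mathcal{D}_j g$ is $C^p$ and has all partials of order $\le p - 1$ vanishing at $0$. For the $\partial_j$ summand this is immediate. For the divided-difference summand I would use Hadamard's lemma on the convex ball, $(1 - s_{j,l})g = (x_j - x_l)\int_0^1 (\partial_j g - \partial_l g)\bigl(s_{j,l}x + t(x - s_{j,l}x)\bigr)\,dt$, so that $\frac{1}{x_j - x_l}(1 - s_{j,l})g$ equals this integral, which is $C^p$ and whose $(p-1)$-jet at $0$ is inherited from that of $\partial_j g - \partial_l g$. (This is the analytic counterpart of the elementary fact that $\mathcal{D}_j$ carries homogeneous polynomials of degree $d$ to homogeneous polynomials of degree $d - 1$, since $(1 - s_{j,l})x^\alpha$ vanishes on $x_j = x_l$.) Now $\mathcal{P}_{\lambda_i} = \sum_j \mathcal{D}_j^{\lambda_i}$ and $\sum_i \lambda_i = k$, so $L$ is a finite sum of $k$-fold compositions $\mathcal{D}_{j_1}\cdots\mathcal{D}_{j_k}$. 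Applying the one-step claim $k$ times to $G$, which is $C^{k+1}$ with vanishing $k$-jet, yields for each such composition a $C^1$ function vanishing at $0$; moreover at each intermediate stage the function is at least $C^2$, so the next Dunkl operator is legitimately applied. Summing over the compositions shows $L\,G$ is continuous and vanishes at the origin, which completes the argument.

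The main obstacle is entirely technical: keeping careful track of the smoothness class and the order of vanishing through the $k$-fold composition while the factors $\frac{1}{x_j - x_l}$ in the Dunkl operators appear singular. This is handled by fixing at the outset a convex (ball) neighborhood of the origin inside the domain and using the Hadamard integral representation, which simultaneously certifies that each divided difference is as smooth as claimed and identifies its Taylor jet at $0$; everything else is routine bookkeeping.
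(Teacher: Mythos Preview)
The paper does not supply its own proof of this proposition: it is quoted verbatim as \cite{matrix}*{Lemma 5.2} and used as a black box. So there is no in-paper argument to compare against.

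Your proof is correct and is the natural one. The key observation---that each Dunkl operator $\mathcal{D}_j$ lowers the order of vanishing at the origin by exactly one, with the divided-difference part handled via the Hadamard integral $(1-s_{j,l})g = (x_j-x_l)\int_0^1(\partial_j g - \partial_l g)(s_{j,l}x + t(x-s_{j,l}x))\,dt$---cleanly reduces the $C^{k+1}$ statement to iterated bookkeeping, and your smoothness count ($C^{k+1}\to C^k\to\cdots\to C^1$, with the $(p-1)$-jet of the integral inherited from that of $\partial_j g - \partial_l g$ via the chain rule at $0$) is accurate. The jet comparison $\exp(F)-\exp(\tilde{F}) = \exp(\tilde{F})(\exp(F-\tilde{F})-1) = O(\norm{x}^{k+1})$ is also fine. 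This is almost certainly the argument in \cite{matrix} as well; in the holomorphic setting relevant to the paper's application (where $F = \ln G_\theta$ is analytic near the origin) one could alternatively work directly with power series, but your $C^{k+1}$ treatment covers the proposition exactly as stated.
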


\subsection{Proof of \texorpdfstring{\Cref{lln}}{}}
\label{subsec:mainproof}

The following result can be proved using equations (5.8) and (5.9) of \cite{matrix}. However, we omit its proof, which is straightforward.

\begin{lemma}
\label{lemma:symmetricpoly}
Suppose $\tilde{F}(x_1,\ldots,x_N)$ is a symmetric polynomial such that $\tilde{F}(0,\ldots,0)=0$. Then, 
\[
\left(\prod_{j=1}^k\mathcal{D}^{\theta_N}_{i_j}\right)\exp(\tilde{F}(x_1,\ldots,x_N))
= \mathcal{D}^{\theta_N}_{\mathfrak{r}}(\tilde{F}(x_1,\ldots,x_N))\exp(\tilde{F}(x_1,\ldots,x_N))
\]
and
\[
\left(\prod_{j=1}^k\mathcal{D}^{\theta_N}_{i_j}\right)\exp(\tilde{F}(x_1,\ldots,x_N))\bigg|_{x_i=0,1\leq i\leq N} = [1]\mathcal{D}^{\theta_N}_{\mathfrak{r}}(\tilde{F}(x_1,\ldots,x_N)).
\]
\end{lemma}

Next, suppose $\lambda\in P^+$ has length $m$. Suppose $\mathfrak{r}=\{i_j\}_{1\leq j\leq |\lambda|}$ is a sequence where the first $\lambda_1$ values are $1$, the next $\lambda_2$ are $2$, and so forth, until the last $\lambda_m$ are $m$. For \Cref{lln}, we look at the limit of \pref{eq:llnev} for $\theta=\theta_N$ and $\mu=\mu_N$ as $N\rightarrow\infty$. For $N\geq 1$, we let $F_N(x_1,\ldots,x_N)$ be a function such that $\exp(F_N(x_1,\ldots,x_N))=G_{\theta_N}(x_1,\ldots,x_N; \mu_N)$; because the $\mu_N$ is exponentially decaying, by \Cref{lemma:convergence}, $G_{\theta_N}(\cdot; \mu_N)$ is holomorphic in a neighborhood of the origin and furthermore evaluates to one at the origin, and is therefore nonzero in a neighborhood of the origin as well. Hence, there exists $F_N$ that is also symmetric and holomorphic in a neighborhood of the origin. Furthermore, we can assume that $F_N(0,\ldots,0)$ equals $\ln(G_{\theta_N}(0,\ldots,0;\mu_N))=0$, rather than a multiple of $2\pi i$.

Suppose that $\tilde{F}_N(x_1,\ldots,x_N)$ is the polynomial for $F_N(x_1,\ldots,x_N)$ from \Cref{prop:equalformal} with $k=|\lambda|$. By \Cref{prop:equalformal}, \pref{eq:llnev}, and \Cref{lemma:symmetricpoly}, 
\begin{align*}
\mathbb{E}_{\mu_N}\left(\prod_{i=1}^m p_{\lambda_i}^{N,c}\right) & = \frac{1}{N^{m+|\lambda|(1-c)}}\left(\prod_{i=1}^m\mathcal{P}^{\theta_N}_{\lambda_i}\right)\exp(F_N(x_1,\ldots,x_N))\bigg|_{x_i=0, 1\leq i\leq N} \\
& = \frac{1}{N^{m+|\lambda|(1-c)}}\left(\prod_{i=1}^m\mathcal{P}^{\theta_N}_{\lambda_i}\right)\exp(\tilde{F}_N(x_1,\ldots,x_N))\bigg|_{x_i=0, 1\leq i\leq N} \\
& = \frac{1}{N^{m+|\lambda|(1-c)}} \sum_{l\in I_N(\lambda)}[1]\mathcal{D}^{\theta_N}_l(\tilde{F}_N(x_1,\ldots,x_N)).
\end{align*}
The symmetric polynomial $\tilde{F}_N(x_1,\ldots,x_N)$ is a symmetric formal series for $N\geq 1$, and with the \Cref{lln} conditions, \Cref{lln:generalization} can be used on $\{\tilde{F}_N(x_1,\ldots,x_N)\}_{N\geq 1}$. This completes the proof.

\section{Eigenvalue Distributions}
\label{sec:eigenvalue}

From \cite{AGZ}*{(2.5.3)}, for all $\beta>0$, the $\beta$-Hermite ensemble is the measure in $\mathcal{M}_N$ with probability density
\begin{equation}
\label{eq:betahermite}
    d_{N, \beta}(x_1, \ldots, x_N) = C_{N, \beta}\prod_{1\leq i<j\leq N} |x_i-x_j|^\beta\prod_{i=1}^N e^{-\frac{\beta x_i^2}{4}}
\end{equation}
over $\mathbb{R}^n$, where
\[
C_{N,\beta} = \frac{(2\pi)^{-\frac{N}{2}}}{N!}\left(\frac{\beta}{2}\right)^{\frac{\beta N(N-1)}{4}+\frac{N}{2}}\Gamma\left(\frac{\beta}{2}\right)^N\left(\prod_{i=1}^N\Gamma\left(\frac{i\beta}{2}\right)\right)^{-1}.
\]
For $\beta=1,2,$ and $4$, $d_{N,\beta}$ is the probability density of the unordered eigenvalues of the GUE, GOE, and GSE, respectively. Also, from \cite{betaensembles}, there exist random tridiagonal matrices with eigenvalue distribution being the $\beta$-Hermitian ensemble for all $\beta>0$. 

Let $d^s_{N,\beta}$ be the density of the pushforward of the $\beta$-Hermite ensemble with respect to the function $f: \mathbb{R}^N\rightarrow \mathbb{R}^N, x\mapsto \sqrt{N}x$. We have that
\begin{equation}
\label{eq:betahermitescaled}
d^s_{N,\beta}(x_1,\ldots,x_N) = \frac{C_{N, \beta}}{N^{\frac{\beta N(N-1)}{4}+\frac{N}{2}}}\prod_{1\leq i<j\leq N} |x_i-x_j|^\beta\prod_{i=1}^N e^{-\frac{\beta x_i^2}{4N}}
\end{equation}
for all $x=(x_1,\ldots,x_N)\in\mathbb{R}^N$.

Suppose $\{\mu_N\}_{N\geq 1}$ is a sequence of probability measures such that $\mu_N$ is in $\mathcal{M}_N$ and has density $d^s_{N,\beta}$. After scaling by $N^{-k}$, the $2k$th moment of the $\beta$-Hermite ensemble converges to the $k$th Catalan number for $k\geq 0$, see \cite{eigenvalstat}*{Theorem 6.2.3}. \Cref{scaledlln} shows that this is the case and proves $0$-LLN satisfaction as an example of an application of \Cref{lln}.

\begin{lemma}[\cite{betaprocess}*{Corollary 3.7}]
\label{prop:scaledbessel}
Let $a_1,\ldots, a_N, y_1, \ldots, y_N, c$ be $2N+1$ arbitrary complex numbers and suppose $\theta>0$. Then, $B_{(ca_1,\ldots,ca_N)}(y_1,\ldots,y_N; \theta)=B_{(a_1,\ldots,a_N)}(cy_1,\\\ldots,cy_N; \theta)$.
\end{lemma}

\begin{lemma}
\label{prop:scaledbgf}
Suppose $\mu\in\mathcal{M}_N$. For $c>0$, let $\mu_c$ be the pushforward of $\mu$ with respect to the function $f:\mathbb{R}^N\rightarrow \mathbb{R}^N, x\mapsto cx$. Then,
$G_\theta(x_1,\ldots,x_N; \mu_c) = G_\theta(cx_1,\ldots,cx_N;\mu)$.
\end{lemma}
\begin{proof}
With \Cref{prop:scaledbessel},
\begin{align*}
    G_\theta(x_1,\ldots,x_N; \mu_c) & = \int_{a\in A} B_a(x_1,\ldots,x_N;\theta) d\mu_c(a_1,\ldots,a_N) \\
    & = \int_{a'\in A} B_{ca'}(x_1,\ldots,x_N;\theta)d\mu(a_1',\ldots, a_N') \\
    & = \int_{a'\in A} B_{a'}(cx_1,\ldots,cx_N;\theta) d\mu(a_1',\ldots,a_N') \\
    & = G_\theta(cx_1,\ldots,cx_N;\mu).
\end{align*}
This finishes the proof.
\end{proof}

\begin{proposition}
\label{scaledlln}
Suppose $\theta>0$. Consider the sequence $\{\mu_N\}_{N\geq 1}$ of probability measures such that for all positive integers $N$, $\mu_N\in\mathcal{M}_N$ and $\mu_N$ has probability density $d_{N,2\theta}^s$ from \pref{eq:betahermitescaled}. Then, $\{\mu_N\}_{N\geq 1}$ satisfy a $0$-LLN, with $m_{2k-1}=0$ and $m_{2k}=\frac{1}{k+1}\binom{2k}{k}$ for $k\geq 1$.
\end{proposition}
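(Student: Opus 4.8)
The plan is to apply \Cref{lln} to $\{\mu_N\}_{N\geq 1}$, so the task reduces to two steps: (i) compute $F_N(x_1,\ldots,x_N):=\ln G_\theta(x_1,\ldots,x_N;\mu_N)$ explicitly, read off the constants $c_\nu$ from the hypothesis \pref{eq:llnconditions}, and check $\mu_N$ is exponentially decaying; and (ii) substitute those $c_\nu$ into the moment formula \pref{eq:llnmoments}. The key input for step (i) is the fact that the Bessel generating function of the (unscaled) $\beta$-Hermite ensemble is Gaussian.

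For step (i), note first that $\mu_N$ is the pushforward under multiplication by $\sqrt N$ of the measure $\nu_N\in\mathcal{M}_N$ with density $d_{N,2\theta}$ from \pref{eq:betahermite}, so \Cref{prop:scaledbgf} gives $G_\theta(x_1,\ldots,x_N;\mu_N)=G_\theta(\sqrt N x_1,\ldots,\sqrt N x_N;\nu_N)$. The classical fact I would use is $G_\theta(y_1,\ldots,y_N;\nu_N)=\exp\!\big(\tfrac{1}{2\theta}\sum_{i=1}^N y_i^2\big)$; for $N=1$ this is the elementary Gaussian integral $\int e^{a y}C_{1,2\theta}e^{-\theta a^2/2}\,da=e^{y^2/(2\theta)}$ (here $B_{(a)}(y;\theta)=e^{ay}$), and for general $N$ it follows from the Harish--Chandra--type integral computations for the Gaussian $\beta$-corners process in \cite{matrix} (cf.\ also \cite{AGZ}, \cite{betaprocess}), or can be checked by comparing moments via \Cref{prop:expectedvalue}. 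Granting this, $F_N(x_1,\ldots,x_N)=\tfrac{N}{2\theta}\sum_{i=1}^N x_i^2$, a quadratic polynomial with $F_N(0,\ldots,0)=0$; each $\mu_N$ has Gaussian tails, hence is exponentially decaying with any exponent $R>0$.

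Since $F_N$ is quadratic, $\tfrac1N\,[1]\,\partial_{i_1}\cdots\partial_{i_r}F_N$ vanishes unless $r=2$ and $i_1=i_2$, in which case it equals $\tfrac1N\cdot\tfrac N\theta=\tfrac1\theta$. Comparing with the right-hand side of \pref{eq:llnconditions}: the pattern $\nu=(2)$ gives $\tfrac{\ell(\nu)(|\nu|-1)!\,c_\nu}{P(\nu)}=c_{(2)}=\tfrac1\theta$, while every other partition $\nu$ with $|\nu|\geq1$ forces $c_\nu=0$ (the corresponding limit is $0$ and the coefficient of $c_\nu$ on the right is nonzero). Thus the hypotheses of \Cref{lln} hold with $c_{(2)}=1/\theta$ and all other $c_\nu=0$.

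For step (ii), substitute into \pref{eq:llnmoments}. For a block $B$ of a noncrossing partition, the factor $\theta^{|B|-1}\sum_{|\nu|=|B|}(-1)^{\ell(\nu)-1}P(\nu)c_\nu$ equals $c_{(1)}=0$ when $|B|=1$; equals $\theta\big(P((2))c_{(2)}-P((1,1))c_{(1,1)}\big)=\theta\cdot\tfrac1\theta=1$ when $|B|=2$; and equals $0$ when $|B|\geq3$ (all those $c_\nu$ vanish). Hence only $\pi\in NC(k)$ all of whose blocks have size $2$ contribute, each contributing $1$, so $m_k$ is the number of noncrossing perfect matchings of $\{1,\ldots,k\}$: this is $0$ for $k$ odd and the Catalan number $\tfrac{1}{j+1}\binom{2j}{j}$ for $k=2j$, giving $m_{2k-1}=0$ and $m_{2k}=\tfrac1{k+1}\binom{2k}{k}$. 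The main obstacle is the first ingredient — establishing the Gaussian form of $G_\theta(\,\cdot\,;\nu_N)$ for general $N$; once that standard fact is in place, the rest is a direct specialization of \Cref{lln} and the combinatorics of noncrossing pair partitions.
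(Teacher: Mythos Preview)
Your proposal is correct and follows essentially the same route as the paper: scale the unscaled $\beta$-Hermite ensemble via \Cref{prop:scaledbgf}, invoke the known Gaussian form $G_\theta(y;\nu_N)=\exp\!\big(\tfrac{1}{2\theta}\sum y_i^2\big)$ (the paper cites \cite{betaprocess}*{Proposition 4.2} for this), read off $c_{(2)}=1/\theta$ and $c_\nu=0$ otherwise, and then reduce \pref{eq:llnmoments} to counting noncrossing pair partitions. Your write-up is in fact slightly more explicit than the paper's in verifying the $c_\nu$ and the block-factor computation.
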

\begin{proof}
For $N\geq 1$, consider $\frac{\mu_N}{\sqrt{N}}$, which has density $d_{N,2\theta}$ in \pref{eq:betahermite}. It is easy to see that $\mu_N$ and $\frac{\mu_N}{\sqrt{N}}$ are exponentially decaying. From \cite{betaprocess}*{Proposition 4.2},
\begin{equation}
\label{eq:bgf}
    G_\theta\left(x_1,\ldots,x_N;\frac{\mu_N}{\sqrt{N}}\right)=\exp\left(\frac{1}{2\theta}\sum_{i=1}^N x_i^2\right).
\end{equation}
Then, from \Cref{prop:scaledbgf} and the previous equation,
\[
G_\theta(x_1,\ldots,x_N;\mu_N)=G_\theta\left(\sqrt{N}x_1, \ldots, \sqrt{N}x_N;\frac{\mu_N}{\sqrt{N}}\right)=\exp\left(\frac{N}{2\theta}\sum_{i=1}^N x_i^2\right).
\]
Afterwards, by \Cref{lln} with $c=0$ and $\theta_N=\theta$ for $N\geq 1$, $\{\mu_N\}_{N\geq 1}$ satisfy a $0$-LLN with $c_{(2)}=\frac{1}{2\theta}$ and $c_{\nu}=0$ for $\nu\in P^+$ such that $\nu\not=(2)$. For $k\geq 1$, $m_{2k-1}=0$ and $m_{2k}$ is the number of $\pi\in NC(2k)$ which have all blocks of size $2$, which is the $k$th Catalan number and equals $\frac{1}{k+1}\binom{2k}{k}$.
\end{proof}

From \Cref{scaledlln}, if $(a_1,\ldots,a_N)$ is distributed with density $d_{N,\beta}(x_1,\ldots,x_N)$, for a positive integer $s$ and positive integers $k_i$, $1\leq i\leq s$, 
\[
\lim_{N\rightarrow\infty} \mathbb{E}_{\mu_N}\left(\prod_{i=1}^s \left(\frac{1}{N}\sum_{i=1}^N\left(\frac{a_i}{\sqrt{N}}\right)^{k_i}\right)\right)  = \prod_{i=1}^s m_{k_i},
\]
with the moments $m_k$ given in \Cref{scaledlln}. Thus, the distributions $d_{N,\beta}$ satisfy a $\frac{1}{2}$-LLN, although note that $\theta$ is constant rather than proportional to $N^{-\frac{1}{2}}$. Sometimes, as seen above, for sequences $\{\mu_N\}_{N\geq 1}$ of exponentially decaying probability measures, we can scale $\mu_N$ by an appropriate power of $N$ for $N\geq 1$ and use \Cref{prop:scaledbgf} to satisfy the conditions of \Cref{lln}. We state this idea rigorously in the following result.

\begin{corollary}
\label{cor:scale}
Suppose $\theta\in\mathbb{C}$ has nonnegative real part and $c$ is a real number such that $c<1$. Let $\{\theta_N\}_{N\geq 1}$ be a sequence of complex numbers with nonnegative real part such that $\lim_{N\rightarrow\infty} N^c\theta_N = \theta$. Let $\{\mu_N\}_{N\geq 1}$ be a sequence of probability measures such that for all $N\geq 1$, $\mu_N$ is in $\mathcal{M}_N$ and is exponentially decaying. Assume that for all $\nu\in P^+$, real numbers $\alpha_\nu$ and $c_\nu$ exist such that 
\[
\displaystyle\lim_{N\rightarrow\infty}\frac{1}{N^{1-\alpha_\nu}}\cdot\frac{\partial}{\partial x_{i_1}}\cdots\frac{\partial}{\partial x_{i_r}} \ln(G_{\theta_N}(x_1,\ldots,x_N;\mu_N))\bigg|_{x_i=0,1\leq i\leq N} = \frac{|\nu|!c_\nu}{P(\nu)}
\]
for all positive integers $i_1, \ldots, i_r$ such that $\sigma((i_1,\ldots,i_r))=\nu$. Let $\Delta=\inf_{\nu\in P^+} \left(\frac{\alpha_\nu - c}{|\nu|}\right)$. Then, $\{\mu_N\}_{N\geq 1}$ satisfies a $(c+\Delta)$-LLN and
\[
m_k = \sum_{\pi\in NC(k)} \prod_{B\in \pi}\theta^{|B|-1}\left(\sum_{\nu\in P, |\nu|=|B|}(-1)^{\ell(\nu)-1} \frac{|\nu|P(\nu)}{\ell(\nu)}c_\nu\mathbf{1}\left\{\Delta = \frac{\alpha_\nu-c}{|B|}\right\}\right)
\]
for all positive integers $k$.
\end{corollary}

\begin{proof}
For $D=N^\Delta$, let $\mu_N^D$ be the pushforward of $\mu_N$ with respect to the function $f:A\rightarrow A, x\mapsto Dx$. By \Cref{prop:scaledbgf}, for all $\nu\in P^+$,
\begin{align*}
&\displaystyle\lim_{N\rightarrow\infty}\frac{1}{N^{1-c}}\cdot\frac{\partial}{\partial x_{i_1}}\cdots\frac{\partial}{\partial x_{i_r}} \ln(G_{\theta_N}(x_1,\ldots,x_N;\mu_N^D))\bigg|_{x_i=0,1\leq i\leq N} \\
& = \lim_{N\rightarrow\infty} \frac{|\nu|!N^{\Delta|\nu| + 1 - \alpha_\nu}c_\nu}{P(\nu) N^{1-c}} = \frac{|\nu|!c_\nu\mathbf{1}\left\{\Delta = \frac{\alpha_\nu-c}{|\nu|}\right\}}{P(\nu)}
\end{align*}
for all positive integers $i_1, \ldots, i_r$ such that $\sigma((i_1,\ldots,i_r))=\nu$. Then, from \Cref{lln}, $\{\mu_N^D\}_{N\geq 1}$ satisfy a $c$-LLN with the moments $\{m_k\}_{k\geq 1}$. Then, $\{\mu_N\}_{N\geq 1}$ satisfy a $(c+\Delta)$-LLN with these moments. 
\end{proof}

\section{Polynomial Coefficients}

\label{sec:coeff}

When $[1]\mathcal{D}^\theta_\mathfrak{r}(F(x_1,\ldots,x_N))$ is expressed as a polynomial in the $c_F^\nu$ for $\nu\in P^+$, we expect the coefficients to be polynomials in $\theta$ and $N$. In this section, we show that this is the case and relate the leading terms of the polynomial coefficients with the main results. In fact, we use this characterization of the coefficients to prove \Cref{thm:equivalence}, which generalizes \Cref{lln}.

\subsection{Proof of \texorpdfstring{\Cref{thm:equivalence}}{}}
\label{subsec:proofequiv}
First, we state the following result, which is a straightforward implication of \Cref{lemma:symmetricpoly}.

\begin{corollary}
\label{cor:commutative}
Suppose $\mathfrak{r}_1$ and $\mathfrak{r}_2$ are sequences of positive integers. 
\begin{enumerate}
\item[(a)] If $\mathfrak{r}_2$ is a permutation of $\mathfrak{r}_1$, then $\mathcal{D}_{\mathfrak{r}_2}^\theta(F(x_1,\ldots,x_N)) = \mathcal{D}_{\mathfrak{r}_1}^\theta(F(x_1,\ldots,x_N))$ for $N\geq \max(\mathfrak{r}_1)$.
\item[(b)] If $\sigma(\mathfrak{r}_1)=\sigma(\mathfrak{r}_2)$, then $[1]\mathcal{D}_{\mathfrak{r}_2}^\theta(F(x_1,\ldots,x_N)) = [1]\mathcal{D}_{\mathfrak{r}_1}^\theta(F(x_1,\ldots,x_N))$ for $N\geq \max(\mathfrak{r}_1,\mathfrak{r}_2)$.
\end{enumerate}
\end{corollary}

\begin{proof}Suppose $\mathfrak{r}_1=\{i_j^1\}_{1\leq j\leq k}$ and $\mathfrak{r}_2=\{i_j^2\}_{1\leq j\leq k}$, where $k\geq 1$. Clearly, there is nothing to prove if $\mathfrak{r}_1$ and $\mathfrak{r}_2$ have different lengths. Statement (a) follows from \Cref{lemma:symmetricpoly}, because 
\[
\prod_{j=1}^k \mathcal{D}^\theta_{i_j^1} = \prod_{j=1}^k \mathcal{D}^\theta_{i_j^2}
\]
by the commutativity of the Dunkl operators. Statement (b) then follows from the fact that $\mathcal{D}_{\mathfrak{r}_2}^\theta(F(x_1,\ldots,x_N))$ is $\mathcal{D}_{\mathfrak{r}_1}^\theta(F(x_1,\ldots,x_N))$ with its variables permuted, so their constant terms must be equal when $N\geq \max(\mathfrak{r}_1,\mathfrak{r}_2)$.
\end{proof}

\begin{lemma}
\label{coefficients}
Suppose $\mathfrak{r}=\{i_j\}_{1\leq j\leq k}$ is a sequence of positive integers. For a term
\[
    p=\prod_{i=1}^m c_F^{\nu_i},
\]
there exists a polynomial $f(x, y)$ with rational coefficients such that for all complex numbers $\theta$ and $N\geq\max(\mathfrak{r})$, the coefficient of $p$ in $[1]\mathcal{D}^{\theta}_\mathfrak{r}(F(x_1,\ldots,x_N))$ is $f(\theta, N)$.
\end{lemma}

\begin{proof}
Suppose $N\geq\max(\mathfrak{r})$. Let $U_N$ be the set of sequences $s$ of derivatives, term multiplications, and switches with variables $x_1,\ldots,x_N$ and indices $\mathfrak{r}$. For $s\in U_N$, we consider $s$ as a function of $\theta$, $s(\theta)$. For $s\in U_N$, let $l(s)$ be the number of switches in $s$, with $0\leq l(s)\leq k$. Then, if $s\in U_N$, we have that for some polynomial $P(x_1,\ldots,x_N)$ and partitions $\nu_j'$, $1\leq j\leq r$,
\[
r(s(\theta))_k=P(x_1,\ldots,x_N)\prod_{j=1}^r c_F^{\nu_j'}\theta^{l(s)}
\]
for all $\theta$. This is because in $s(\theta)$, $\theta$ is multiplied once for each switch and the other operators do not depend on $\theta$.

Let $T_N$ be the set of $s\in U_N$ such that $P(x_1,\ldots,x_N)$ is nonzero and has degree $0$ in the $x_i$ and $\prod_{j=1}^r c_F^{\nu_j'}=p$. If $s\in T_N$ and $r(s(\theta))_k=d\theta^{l(s)}p$ for $d\in\mathbb{R}\backslash\{0\}$, let $C'(s)=d$; note that $C'(s)$ does not depend on $\theta$. The coefficient of $p$ in $[1]\mathcal{D}^{\theta}_\mathfrak{r}(F(x_1,\ldots,x_N))$ is $\sum_{s\in T_N} C'(s)\theta^{l(s)}$ for all $\theta$.

If the sum of the $|\nu_i|$ is not $k$, from \Cref{polynomial} the coefficient of $p$ is $0$. Suppose that the sum of the $|\nu_i|$ is $k$. Let $S$ be the set of elements in $\mathfrak{r}$. Also, for $s\in T_N$, let $X(s)$ be the set $x_i$ such that $i\notin\mathfrak{r}$ and $x_i$ appears in switches or term multiplications of $s$. Since each $x_i\in X(s)$ from term multiplications must be eliminated by at least one switch by \Cref{claim:switches}, and each other $x_i\in X(s)$ appears in at least one switch, $|X(s)|\leq k$ because there are at most $k$ switches. 

For each $A\subset\{x_1,\ldots,x_N\}\backslash S$, where the number of $s\in T_N$ such that $X(s)=A$ is finite, let
\[
    g(A)=\sum_{s\in T_N, X(s)=A} C'(s)\theta^{l(s)}=\sum_{i=0}^k\theta^i\sum_{\substack{s\in T_N, X(s)=A, \\ l(s)=i}} C'(s).
\]
For $A$ with $|A|=Q$ to exist, we must have $N\geq |S|+Q$. Suppose $Q$ is an integer, $0\leq Q\leq k$. By symmetry, $g(A)$ is the same for all $A\subset\{x_1, \ldots, x_N\}\backslash S$, such that $|A|=Q$. With this, let
\[
r^N_{Q,i}=\sum_{\substack{s\in T_N, X(s)=A, \\ l(s)=i}} C'(s)
\]
for all $A$ with $|A|=Q$.

\begin{claim}
For $0\leq i\leq k$, $r^N_{Q,i}$ is the same for all $N\geq \max(\max(\mathfrak{r}), |S|+Q)$. 
\end{claim}

\begin{proof}
Let $A$ be the set of $x_i$ for the $Q$ smallest positive integers $i$ such that $x_i\notin S$. We see that $A$ is the same for all $N$. Suppose $T_N^i$ is the set of $s\in T_N$ such that $X(s)=A$ and $l(s)=i$ for $0\leq i\leq k$. 

The possible derivatives of $s\in T_N^i$ are derivatives with respect to $x_i$ for $i\in\mathfrak{r}$, and the possible switches of $s\in T_N^i$ are switches from elements of $\mathfrak{r}$ to elements of $\mathfrak{r}\cup A$. Also, the possible term multiplications of $s\in T_N^i$ are derivatives with respect to $x_i$ for $i\in\mathfrak{r}$ of terms with all $x_j$ in $\mathfrak{r}\cup A$. Therefore, the possible derivatives, switches, and term multiplications of $s\in T_N^i$ do not depend on $N$. The number of switches is $i$, which does not depend on $N$ as well. From this, $T_N^i$ is the same for all $N\geq \max(\max(\mathfrak{r}), |S|+Q)$. However, the sum of $C'(s)$ for $s\in T_N^i$ is $r^N_{Q,i}$, and because $T_N^i$ does not depend on $N$, $r^N_{Q,i}$ does not depend on $N$ as well.
\end{proof}

Suppose $r_{Q,i}$ is the value of $r^N_{Q,i}$ for $N\geq \max(\max(\mathfrak{r}), |S|+Q)$. Then, let $p_Q(x)=\sum_{i=0}^k r_{Q,i}x^i$ for $0\leq Q\leq k$. We have that $g(A)=p_Q(\theta)$ for $A$ with $|A|=Q$. Also, note that $\binom{N-|S|}{Q}$ is the number of $A$ such that $|A|=Q$. With this, if
\[
    f(x, y)=\sum_{0\leq Q\leq k} p_Q(x)\binom{y-|S|}{Q},
\]
the coefficient of $p$ is $f(\theta, N)$ for all complex numbers $\theta$ and $N\geq \max(\mathfrak{r})$. Observe that in this formula, we use the identity $\binom{N-|S|}{Q}=0$ if $N<Q+|S|$. We are done.
\end{proof}

Suppose $k$ is a positive integer. Let $S_k$ be the set of all multisets $q$ of partitions with size at least $1$ such that the sum of $|\nu|$ for $\nu\in q$ is $k$. Additionally, let
\[
    \mathfrak{P}_k= \Bigg\{\prod_{\nu\in q} c_F^\nu \Bigg| q\in S_k\Bigg\},
\]
and if $p\in\mathfrak{P}_k$ equals $p=\prod_{\nu\in q} c_F^\nu$ for $q\in S_k$, let $\ell(p)=|q|$.

Using \Cref{coefficients}, for indices $\mathfrak{r}$ of length $k$ and $N\geq\max(\mathfrak{r})$, let the coefficient of $p$ in $[1]\mathcal{D}^\theta_\mathfrak{r}(F(x_1,\ldots,x_N))$ be $f_{p, \mathfrak{r}}(\theta, N)$ for $p$ in $\mathfrak{P}_k$. Then, for $N\geq\max(\mathfrak{r})$,
\begin{equation}
\label{eq:polyexpansion}
    [1]\mathcal{D}^\theta_\mathfrak{r}(F(x_1,\ldots,x_N)) = \sum_{p\in\mathfrak{P}_k} f_{p,\mathfrak{r}}(\theta, N)p.
\end{equation}
Also, from \Cref{order} with $c=0$ and $\theta_N=\theta$ for $N\geq 1$ for any $\theta\in\mathbb{C}$, $f_{p,\mathfrak{r}}(x, y)$ has degree at most $k-\ell(p)$ in $y$. Then, let the coefficient of $y^{k-\ell(p)}$ in $f_{p,\mathfrak{r}}(x, y)$ be $s_{p,\mathfrak{r}}(x)$, with $s_{p,\mathfrak{r}}(x)=0$ being possible.

\begin{lemma}
\label{cor:leadingorderterm} Suppose $k\geq 1$, $\mathfrak{r}=\{i_j\}_{1\leq j\leq k}$ is a sequence of positive integers, $\lambda=\sigma(\mathfrak{r})$, and $m=\ell(\lambda)$. Then,
\[
    \sum_{p\in \mathfrak{P}_k} s_{p,\mathfrak{r}}(x)p = \prod_{i=1}^{\ell(\lambda)}\left(\sum_{\pi\in NC(\lambda_i)} \prod_{B\in\pi}x^{|B|-1}\left(\sum_{\nu\in P, |\nu|= |B|}(-1)^{\ell(\nu)-1}\frac{|\nu|P(\nu)}{\ell(\nu)}c_F^\nu\right)\right).
\]
\end{lemma}

\begin{proof}[Proof of \Cref{cor:leadingorderterm}]
Consider the indices $\mathfrak{r}'$ of length $k$, where the first $\lambda_1$ indices are $1$, the next $\lambda_2$ are $2$, and so forth, until the last $\lambda_m$ indices are $m$. From \Cref{cor:commutative}, $[1]\mathcal{D}^\theta_{\mathfrak{r}}(F(x_1,\ldots,x_N))=[1]\mathcal{D}^\theta_{\mathfrak{r}'}(F(x_1,\ldots,x_N))$ for all $N\geq\max(\mathfrak{r})$. Therefore, $f_{p,\mathfrak{r}}(x, y)=f_{p,\mathfrak{r}'}(x, y)$ and $s_{p,\mathfrak{r}}(x)=s_{p,\mathfrak{r}'}(x)$ for all $p\in\mathfrak{P}_k$. 

Let $c=0$ and $\theta$ be a complex number. Suppose $c_F^\nu$ is a complex number for all $\nu\in P^+$ and $F_N(x_1,\ldots,x_N)$ has $c_{F_N}^\nu = Nc_F^\nu$ for $\nu\in P$ with $1\leq \ell(\nu)\leq N$ and $c_{F_N}^\nu=0$ for $\nu\in P$ with $\ell(\nu)\geq N+1$. Then, from \Cref{prop:limitzero}, \Cref{remainder1}, and \Cref{lln:generalization} with $c=0$,
\begin{align*}
    & \lim_{N\rightarrow\infty}\frac{1}{N^k} [1]\mathcal{D}^\theta_{\mathfrak{r}'}(F_N(x_1,\ldots,x_N)) = \lim_{N\rightarrow\infty}\left(\frac{1}{N^{k+m}}\sum_{l\in I_N(\lambda)}[1]\mathcal{D}^\theta_{l}(F_N(x_1,\ldots,x_N))\right) \\
    & = \prod_{i=1}^m\left(\sum_{\pi\in NC(\lambda_i)} \prod_{B\in\pi}\theta^{|B|-1}\left(\sum_{\nu\in P, |\nu|= |B|}(-1)^{\ell(\nu)-1}\frac{|\nu|P(\nu)}{\ell(\nu)}c_F^\nu\right)\right).
\end{align*}
However, substituting $Nc_F^\nu$ for $c_{F_N}^\nu$ in \pref{eq:polyexpansion} gives
\[
    \frac{1}{N^k}[1]\mathcal{D}^\theta_{\mathfrak{r}'}(F_N(x_1,\ldots,x_N)) = \sum_{p\in\mathfrak{P}_k} \frac{f_{p,\mathfrak{r}'}(\theta, N)p}{N^{k-\ell(p)}},
\]
where the $c_F^\nu$ and thus $p\in\mathfrak{P}_k$ are constants. Since $f_{p,\mathfrak{r}'}(x, y)$ has degree at most $k-\ell(p)$ in $y$ from \Cref{order}, $\lim_{N\rightarrow\infty}\frac{f_{p,\mathfrak{r}'}(\theta, N)p}{N^{k-\ell(p)}}=s_{p,\mathfrak{r}'}(\theta)p=s_{p,\mathfrak{r}}(\theta)p$. Then,
\[
    \lim_{N\rightarrow\infty}\frac{1}{N^k}[1] \mathcal{D}^\theta_{\mathfrak{r}'}(F_N(x_1,\ldots,x_N))=\sum_{p\in\mathfrak{P}_k}s_{p,\mathfrak{r}}(\theta)p.
\]
Since $\theta$ and the $c_F^\nu$ can be any complex numbers, as a polynomial in $\theta$ and the $c_F^\nu$,
\[
\sum_{p\in\mathfrak{P}_k}s_{p,\mathfrak{r}}(\theta)p = \prod_{i=1}^m\left(\sum_{\pi\in NC(\lambda_i)} \prod_{B\in\pi}\theta^{|B|-1}\left(\sum_{\nu\in P, |\nu|= |B|}(-1)^{\ell(\nu)-1}\frac{|\nu|P(\nu)}{\ell(\nu)}c_F^\nu\right)\right),
\]
which gives the result.
\end{proof}

\begin{proof}[Proof of \Cref{thm:equivalence}]
First, we prove the analogue of \Cref{lln:generalization} in this setting. To show that this implies the result, we can follow the argument in \Cref{subsec:mainproof}. Suppose $\lambda\in P^+$ and replace $\ln(G_{\theta_N}(x_1,\ldots,x_N;\mu_N))$ with $\tilde{F}_N(x_1,\ldots,x_N)$ as we do previously, where $\tilde{F}_N$ also depends on $|\lambda|$. Then, for $\nu\in P^+$ with $|\nu|\leq |\lambda|$,
\[
c_\nu^N \triangleq \frac{P(\nu)}{|\nu|!N^{1-c}}\frac{\partial}{\partial x_{i_1}}\cdots\frac{\partial}{\partial x_{i_r}} \tilde{F}_N(x_1,\ldots,x_N))\bigg|_{x_i=0,1\leq i\leq N},
\]
for any positive integers $i_1, \ldots, i_r \leq N$ such that $\sigma((i_1,\ldots,i_r))=\nu$, $|c_\nu^N| = N^{o_N(1)}$ for all $\nu\in P^+$, and the goal is to show that
\[
    \lim_{N\rightarrow\infty} \left(\frac{1}{N^{\ell(\lambda)+|\lambda|(1-c)}}\sum_{l\in I_N(\lambda)}[1]\mathcal{D}^{\theta_N}_l(\tilde{F}_N(x_1,\ldots,x_N))\right) =  \prod_{i=1}^{\ell(\lambda)}\left(\sum_{\pi\in NC(\lambda_i)} \prod_{B\in\pi} c_{|B|}\right)
\]
for all $\lambda\in P^+$ if and only if \pref{eq:cumulantlimit} is true. Furthermore, observe that $c_{\tilde{F}_N}^\nu=N^{1-c}c_\nu^N$.

Suppose $k=|\lambda|$ and $\mathfrak{r}\in I_N(\lambda)$; note that we do not necessarily have that $\sigma(\mathfrak{r})=\lambda$. Because we are considering $\tilde{F}_N$ rather than $F$, let $\mathfrak{P}_k^N = \{\prod_{\nu\in q} c_{\tilde{F}_N}^\nu \mid q\in S_k\}$. Suppose $p\in\mathfrak{P}_k^N$ and $m=\ell(p)$.

Recall that the coefficient of $p$ in $[1]\mathcal{D}_\mathfrak{r}^{\theta_N}(\tilde{F}_N(x_1,\ldots,x_N))$ is $f_{p,\mathfrak{r}}(\theta_N,N)$. Consider the term $f_{p,\mathfrak{r}}(\theta_N,N)p$. We have that $|p|=O(N^{(1-c)m+o_N(1)})$ by the $|c_\nu^N|=N^{o_N(1)}$ condition. Let $g_{p,\mathfrak{r}}(\theta,N)$ denote the sum of the summands of $f_{p,\mathfrak{r}}(\theta, N)$ with degree $k-m$ in $\theta$; note that this is the maximal degree of $\theta$ because any sequence that contributes to the coefficient of $p$ must have exactly $m$ term multiplications. The values $T_\ell$ for $0\leq \ell\leq k-m-1$ are defined in the proof of \Cref{order}; the proof shows that $|T_\ell|=O(N^\ell)$, so
\[
|f_{p,\mathfrak{r}} - g_{p,\mathfrak{r}}|(\theta_N,N) \leq \sum_{\ell=0}^{k-m-1} |\theta_N|^{\ell} O(N^{\ell}) = \sum_{\ell=0}^{k-m-1} O(N^{\ell(1-c)}) = O(N^{(k-m-1)(1-c)}).
\]
It is then clear that 
\[
|f_{p,\mathfrak{r}} - g_{p,\mathfrak{r}}|(\theta_N,N) |p| = O(N^{(k-1)(1-c)+o_N(1)}).
\]
It follows that
\[
\left|\sum_{p\in\mathfrak{P}_k^N} (f_{p,\mathfrak{r}} - g_{p,\mathfrak{r}})(\theta_N,N)\right| = O(N^{(k-1)(1-c)+o_N(1)}).
\]
Therefore,
\[
[1]\mathcal{D}_{\mathfrak{r}}^{\theta_N}(\tilde{F}_N(x_1,\ldots,x_N)) = \sum_{p\in\mathfrak{P}_k^N} g_{p,\mathfrak{r}}(\theta_N, N) p + O(N^{(k-1)(1-c)+o_N(1)}).
\]

Let $h_{p,\mathfrak{r}}(\theta,N)$ be the term of $g_{p,\mathfrak{r}}(\theta,N)$ with degree $k-m$ in $N$; we have previously noted that $k-m$ is the maximal degree of $N$. Then, $h_{p,\mathfrak{r}}(\theta,N)$ is $(\theta N)^{k-m}$ multiplied by a rational number, which is possibly zero. Furthermore, 
\[
[1]\mathcal{D}_{\mathfrak{r}}^{\theta_N}(\tilde{F}_N(x_1,\ldots,x_N)) = \sum_{p\in\mathfrak{P}_k^N} h_{p,\mathfrak{r}}(\theta_N, N) p + O(N^{k(1-c)-1+o_N(1)}) + O(N^{(k-1)(1-c)+o_N(1)}).
\]

From \Cref{cor:leadingorderterm}, $s_{p,\mathfrak{r}}(x)$ is a multiple of $x^{k-m}$. That is, the coefficient of $N^{k-m}$ in $f_{p,\mathfrak{r}}(\theta,N)$ is a multiple of $\theta^{k-m}$. This makes sense because the only sequences that can contribute to the coefficient $N^{k-m}$ must consist of $k-m$ switches, each of which contributes a factor of $\theta$. This implies that $h_{p,\mathfrak{r}}(\theta,N) = s_{p,\mathfrak{r}}(\theta)N^{k-m}$. Hence,
\[
[1]\mathcal{D}_{\mathfrak{r}}^{\theta_N}(\tilde{F}_N(x_1,\ldots,x_N)) = \sum_{p\in\mathfrak{P}_k^N} s_{p,\mathfrak{r}}(\theta_N)N^{k-m} p + O(N^{k(1-c)-1+o_N(1)}) + O(N^{(k-1)(1-c)+o_N(1)}).
\]
We then have that
\begin{equation}
\label{eq:sumpoly}
\begin{split}
&\sum_{\mathfrak{r}\in I_N(\lambda)} [1]\mathcal{D}_{\mathfrak{r}}^{\theta_N}(\tilde{F}_N(x_1,\ldots,x_N)) \\
& = \sum_{\mathfrak{r}\in I_N(\lambda)} \sum_{p\in\mathfrak{P}_k^N} s_{p,\mathfrak{r}}(\theta_N)N^{k-\ell(p)} p + O(N^{k(1-c)-1}) + O(N^{(k-1)(1-c)}) \\
& = \sum_{p\in\mathfrak{P}_k^N}s_{p,\lambda}(\theta_N)N^{k-\ell(p) + \ell(\lambda)}p +  O(N^{\ell(\lambda)+k(1-c)-1+o_N(1)}) + O(N^{\ell(\lambda)+(k-1)(1-c)+o_N(1)}).
\end{split}
\end{equation}
We explain how we arrive at this expression. Similarly to the proof of \Cref{remainder1}, for $\mathfrak{r}\in I_N(\lambda)$ such that $\sigma(\mathfrak{r})\not=\lambda$, for all $p\in\mathfrak{P}_k^N$ we have that
\[
s_{p,\mathfrak{r}}(\theta_N)N^{k-\ell(p)}p = O(|\theta_N|^{k-\ell(p)} N^{k-\ell(p)})p = O(N^{k(1-c)+o_N(1)}).
\]
Since the number of such $\mathfrak{r}$ is $O(N^{\ell(\lambda)-1})$, this contributes to the $O(N^{\ell(\lambda)+k(1-c)-1+o_N(1)})$ remainder term.

Next, observe that $s_{p,\lambda}(\theta_N) = (1+o_N(1))s_{p,\lambda}(\theta)N^{-c(k-\ell(p))}$. Therefore,
\begin{align*}
\sum_{p\in\mathfrak{P}_k^N}s_{p,\lambda}(\theta_N)N^{k-\ell(p)+\ell(\lambda)} p&= (1+o_N(1))\sum_{p\in\mathfrak{P}_k^N} s_{p,\lambda}(\theta)N^{-c(k-\ell(p))}N^{k-\ell(p)+\ell(\lambda)}p \\
& = (1+o_N(1))N^{\ell(\lambda)}\sum_{p\in\mathfrak{P}_k^N} s_{p,\lambda}(\theta)N^{(1-c)(k-\ell(p))}p \\
& = (1+o_N(1))N^{\ell(\lambda)+k(1-c)} \sum_{p\in\mathfrak{P}_k^N} s_{p,\lambda}(\theta)\frac{p}{N^{\ell(p)(1-c)}}.
\end{align*}
Hence,
\begin{align*}
&\sum_{\mathfrak{r}\in I_N(\lambda)} [1]\mathcal{D}_{\mathfrak{r}}^{\theta_N}(\tilde{F}_N(x_1,\ldots,x_N)) =\\
& (1+o_N(1))N^{\ell(\lambda)+k(1-c)} \sum_{p\in\mathfrak{P}_k^N} s_{p,\lambda}(\theta)\frac{p}{N^{\ell(p)(1-c)}} +  O(N^{-1+o_N(1)}) + O(N^{-(1-c)+o_N(1)}).
\end{align*}
Afterwards, \Cref{cor:leadingorderterm} with $\lambda$ as $\mathfrak{r}$ gives that 
\begin{equation}
\label{eq:simplify}
\begin{split}
&\frac{1}{N^{\ell(\lambda)+k(1-c)}}\sum_{\mathfrak{r}\in I_N(\lambda)} [1]\mathcal{D}_{\mathfrak{r}}^{\theta_N}(\tilde{F}_N(x_1,\ldots,x_N)) \\
& = (1+o_N(1)) \sum_{p\in\mathfrak{P}_k^N} s_{p,\lambda}(\theta)\frac{p}{N^{\ell(p)(1-c)}} +  O(N^{-1+o_N(1)}) + O(N^{-(1-c)+o_N(1)}) \\
& = (1+o_N(1)) \prod_{i=1}^{\ell(\lambda)}\left(\sum_{\pi\in NC(\lambda_i)} \prod_{B\in\pi}\theta^{|B|-1}\left(\sum_{\nu\in P, |\nu|= |B|}(-1)^{\ell(\nu)-1}\frac{|\nu|P(\nu)}{\ell(\nu)}c_N^\nu\right)\right) + o_N(1),
\end{split}
\end{equation}
where we have used $c_{\tilde{F}_N}^\nu=N^{1-c} c_N^\nu$. 

We prove the forward direction. Assume that $\{\mu_N\}_{N\geq 1}$ satisfy a $c$-LLN with free cumulants $\{c_k\}_{k\geq 1}$. Using $\lambda=(k)$ in \pref{eq:simplify} gives that 
\[
 \lim_{N\rightarrow\infty}\sum_{\pi\in NC(k)} \prod_{B\in\pi}\theta^{|B|-1}\left(\sum_{\nu\in P, |\nu|= |B|}(-1)^{\ell(\nu)-1}\frac{|\nu|P(\nu)}{\ell(\nu)}c_N^\nu\right)= \sum_{\pi\in NC(k)}\prod_{B\in\pi} c_{|B|}.
\]
Afterwards, we can show \pref{eq:cumulantlimit} using induction on $k$ to prove the forward direction.

For the reverse direction, using \pref{eq:cumulantlimit} in \pref{eq:simplify} gives that
\[
\lim_{N\rightarrow\infty} \frac{1}{N^{\ell(\lambda)+k(1-c)}}\sum_{\mathfrak{r}\in I_N(\lambda)} [1]\mathcal{D}_{\mathfrak{r}}^{\theta_N}(\tilde{F}_N(x_1,\ldots,x_N)) = \prod_{i=1}^{\ell(\lambda)}\left(\sum_{\pi\in NC(\lambda_i)} \prod_{B\in\pi}c_{|B|}\right),
\]
which shows that $\{\mu_N\}_{N\geq 1}$ satisfy a $c$-LLN.
\end{proof}

Observe that in the previous proof, we have derived the following result.

\begin{lemma}
\label{lemma:remainder}
Suppose $k\geq 1$ and $\mathfrak{r}=\{i_j\}_{1\leq j\leq k}$. Then, for $N\geq \max(\mathfrak{r})$,
\[
[1]\mathcal{D}_{\mathfrak{r}}^{\theta}(F(x_1,\ldots,x_N)) = \sum_{p\in\mathfrak{P}_k} (s_{p,\mathfrak{r}}(\theta)N^{k-\ell(p)} + R_{1,\mathfrak{r},p}(\theta,N)+ R_{2,\mathfrak{r},p}(\theta,N))p,
\]
where for all $p\in\mathfrak{P}_k^N$,
\begin{itemize}
\item $s_{p,\mathfrak{r}}(\theta)=c_1\theta^{k-\ell(p)}$ for some rational number $c_1$.
\item The maximum degree of $\theta$ in the polynomial $R_{1,\mathfrak{r},p}(\theta,N)$ is at most $k-\ell(p)-1$ and the maximum degree of $N$ in the coefficient of $\theta^\ell$ is at most $\ell$ for $0\leq \ell\leq k-\ell(p)-1$.
\item The polynomial $R_{2,\mathfrak{r},p}(\theta,N)=c_2(N)\theta^{k-\ell(p)}$ for some polynomial $c_2$ and the maximum degree of $N$ in $c_2$ is at most $k-\ell(p)-1$.
\end{itemize}
\end{lemma}

\begin{remark}
By considering the regime $\theta N\rightarrow\gamma$, we have that for all $k\geq 1$ and $p\in\mathfrak{P}_k$,
\[
\gamma^{k-\ell(p)} \frac{s_{p,\mathfrak{r}}(\theta)}{\theta^{k-\ell(p)}} + \sum_{i=0}^{k-\ell(p)-1} \gamma^i \alpha_i
\]
is the limit of the coefficient of $p$ in \cite{matrix}*{(5.2)} as $N\rightarrow\infty$, where $\alpha_i$ is the coefficient of $\theta^iN^i$ in $R_{1,\mathfrak{r},p}(\theta,N)$ for $0\leq i\leq k-\ell(p)-1$.
\end{remark}

Furthermore, we have the following corollary, which is analogous to \Cref{cor:scale}.

\begin{corollary}
\label{cor:equivalencescale}
Suppose $\theta\in\mathbb{C}$ has nonnegative real part and $c$ is a real number such that $c<1$. Let $\{\theta_N\}_{N\geq 1}$ be a sequence of complex numbers with nonnegative real part such that $\lim_{N\rightarrow\infty}N^c\theta_N=\theta$. Let $\{\mu_N\}_{N\geq 1}$ be a sequence of probability measures such that for all $N\geq 1$, $\mu_N$ is in $\mathcal{M}_N$ and is exponentially decaying. For $N\geq 1$ and $\nu\in P^+$, suppose $\alpha_\nu$ is a real number and define
\[
c_\nu^N\triangleq\frac{P(\nu)}{|\nu|!N^{1-\alpha_\nu}}\cdot\frac{\partial}{\partial x_{i_1}}\cdots\frac{\partial}{\partial x_{i_r}} \ln(G_{\theta_N}(x_1,\ldots,x_N;\mu_N))\bigg|_{x_i=0,1\leq i\leq N}
\]
for any positive integers $i_1, \ldots, i_r \leq N$ such that $\sigma((i_1,\ldots,i_r))=\nu$. By symmetry, any choice of $i_1,\ldots,i_r$ results in the same derivative. Let $\Delta=\inf_{\nu\in P^+} \left(\frac{\alpha_\nu - c}{|\nu|}\right)$.

Assume that for all $\nu\in P^+$, $|c_\nu^N|= N^{o_N(1)}$. Then, $\{\mu_N\}_{N\geq 1}$ satisfies a $(c+\Delta)$-LLN with free cumulants $\{c_k\}_{k\geq 1}$ if and only if 
\begin{equation}
\label{eq:cumulantlimit2}
\lim_{N\rightarrow\infty} \theta^{k-1}\sum_{\nu\in P, |\nu|=k}(-1)^{\ell(\nu)-1} \frac{|\nu|P(\nu)}{\ell(\nu)} c_\nu^N \mathbf{1}\left\{\Delta = \frac{\alpha_\nu-c}{k}\right\} = c_k
\end{equation}
for all $k\geq 1$. 
\end{corollary}

\begin{proof}
We follow the method of the proof of \Cref{cor:scale}. For $D=N^\Delta$, let $\mu_N^D$ be the pushforward of $\mu_N$ with respect to the function $f:A\rightarrow A, x\mapsto Dx$. By \Cref{prop:scaledbgf}, for all $\nu\in P^+$,
\[
\frac{P(\nu)}{|\nu|!N^{1-c}}\cdot\frac{\partial}{\partial x_{i_1}}\cdots\frac{\partial}{\partial x_{i_r}} \ln(G_{\theta_N}(x_1,\ldots,x_N;\mu_N^D))\bigg|_{x_i=0,1\leq i\leq N}  = \frac{N^{\Delta|\nu|-\alpha_\nu}}{N^{-c}}c_\nu^N.
\]
for all positive integers $i_1, \ldots, i_r$ such that $\sigma((i_1,\ldots,i_r))=\nu$. 

Since $\Delta|\nu| -\alpha_\nu \leq -c$, it is clear that $\left|\frac{N^{\Delta|\nu|-\alpha_\nu}}{N^{-c}}c_\nu^N\right|=N^{o_N(1)}$ for $\nu\in P^+$. Then, by \Cref{thm:equivalence}, $\{\mu_N^D\}_{N\geq 1}$ satisfy a $c$-LLN with free cumulants $\{c_k\}_{k\geq 1}$ if and only if
\[
\lim_{N\rightarrow\infty} \theta^{k-1}\sum_{\nu\in P, |\nu|=k}(-1)^{\ell(\nu)-1} \frac{|\nu|P(\nu)}{\ell(\nu)} \frac{N^{\Delta|\nu|-\alpha_\nu}}{N^{-c}}c_\nu^N  = c_k
\]
for all $k\geq 1$, which is equivalent to \pref{eq:cumulantlimit2} being satisfied for all $k\geq 1$. Since $\{\mu_N^D\}_{N\geq 1}$ satisfy a $c$-LLN with free cumulants $\{c_k\}_{k\geq 1}$ if and only if $\{\mu_N\}_{N\geq 1}$ satisfy a $(c+\Delta)$-LLN with free cumulants $\{c_k\}_{k\geq 1}$, we are done.
\end{proof}

\subsection{Application to the \texorpdfstring{$\beta$-Hermite ensemble}{}}

We discuss an example of computing the coefficient of a term, where the coefficient satisfies the conditions of \Cref{lemma:remainder}.

\begin{lemma}
\label{lemma:coeffex}
Suppose $\mathfrak{r}$ consists of $2k$ equal positive integers and $\theta=1$. For $N\geq \max(\mathfrak{r})$, the coefficient of $(c_F^{(2)})^k$ in $[1]\mathcal{D}_\mathfrak{r}^\theta(F(x_1,\ldots,x_N))$ is
\[
\frac{1}{N} \frac{(2k)!}{k!}\sum_{j=0}^{k}2^j\binom{k}{j}\binom{N}{j+1}.
\]
\end{lemma}

\begin{proof}
Assume that $\mu_N$ has density $d_{N,2}$. Then, using \pref{eq:bgf} gives that $c_F^{(2)}=\frac{1}{2}$. Hence, the coefficient of $(c_F^{(2)})^k$ is 
\[
\frac{2^k}{N}[1]\mathcal{P}_{2k}^\theta G_\theta(x_1,\ldots,x_N;\mu_N) = \frac{2^k}{N} \mathbb{E}_{\mu_N}\left[\sum_{j=1}^N a_j^{2k}\right],
\]
see \Cref{prop:expectedvalue}. Then, using the calculation of $\mathbb{E}_{\mu_N}\left[\sum_{j=1}^N a_j^{2k}\right]$ in \cite{eulerchar} finishes the proof.
\end{proof}

\begin{remark}
Similarly, using \cite{random_matrix}*{(6.5.30)} gives an expression for the coefficient of $(c_F^{(2)})^k$ in $[1]\mathcal{D}_\mathfrak{r}^\theta(F(x_1,\ldots,x_N))$ for general $\mathfrak{r}$ when $\theta=1$.
\end{remark}

Observe that we have only computed the coefficient of $(c_F^{(2)})^k$ when $\mathfrak{r}$ consists of $2k$ equal positive integers for $\theta=1$. We do not exactly compute the general formula as a polynomial of $\theta$ and $N$. We show an estimate of this formula with error $O(N^{-2})$.

\begin{lemma}
\label{lemma:coeff}
Suppose $\mathfrak{r}$ consists of $2k$ equal positive integers. For $N\geq \text{max}(\mathfrak{r})$, the coefficient of $(c_F^{(2)})^k$ in $[1]\mathcal{D}_{\mathfrak{r}}^\theta(F(x_1,\ldots,x_N))$ is
\[
\frac{2^k\theta^k\binom{2k}{k}N^k}{k+1} +2^{k-1}\left(k+\sum_{l=1}^k \frac{1}{l+1}\binom{2l}{l}4^{k-l}\right)(\theta^{k-1}-\theta^k)N^{k-1} + O_\theta(N^{k-2}).
\]
\end{lemma}

\begin{proof}
First, observe that the coefficient of $N^k$ is a multiple of $\theta^k$ because the only method to contribute a factor of $N^k$ is to have $k$ switches and $k$ term multiplications, see \Cref{lemma:remainder}. By setting $\theta=1$ and using \Cref{lemma:coeffex}, we obtain the coefficient of $N^k$.

Similarly, the coefficient of $N^{k-1}$ is a linear combination of $\theta^{k-1}$ and $\theta^k$. Since we know the coefficient of $N^{k-1}$ when $\theta=1$, it suffices to find the coefficient of $\theta^{k-1}N^{k-1}$. 

Without loss of generality, assume that $\mathfrak{r}$ consists of $2k$ $1$s. The coefficient of $\theta^{k-1}N^{k-1}$ is contributed to by sequences of $k$ term multiplications, $k-1$ switches, and $1$ derivative.

Since we are computing the coefficient of $(c_F^{(2)})^k$, for each term multiplication we multiply by $2c_F^{(2)} x_1$. For simplicity, assume that $c_F^{(2)}=1$, so that we multiply by $2x_1$ for each term multiplication.

Furthermore, in order to achieve the factor $N^{k-1}$, each of the $k-1$ switches are a switch from $1$ to a distinct index $i\in [N]\backslash\{1\}$. If some of the indices are the same, then we would contribute a factor of at most $N^{k-2}$. The number of ways to choose the indices of the switches in this case is $(k-1)!\binom{N-1}{k-1}$; since we are only considering the coefficient of $\theta^{k-1}N^{k-1}$, the number of ways to choose the indices is equivalent to $N^{k-1}$. By symmetry, we can assume that the indices from the first switch to the last are $2, 3, \ldots, k$.

Additionally, if after the switch from $1$ to $i\geq 2$ there is a summand that contains $x_i$, note that it is impossible to remove $x_i$ without deleting each summand containing $x_i$, because each derivative is $\partial_1$ and the switches are to distinct indices. In this setting, the only summand resulting from applying a switch to a term which does not contain $x_i$ is the term with the exponent of $x_1$ decreased by one. This because the term multiplications each multiply by $2x_1$ and the switches have distinct indices, so there is no $x_i$ prior to applying the switch from $1$ to $i$. Hence, we can replace the switch from $1$ to $i$ with $d_1$. 

\begin{remark}Note that this argument is from \cite{matrix}, which uses it to justify replacing the switch from $i_j$ to $i$ with $d_{i_j}$ in step 3 of the proof of Proposition 5.5. We similarly replace the switch from $1$ to $i$ with $d_1$, which contrasts with the proof of \Cref{claim:bijval} where we would replace the switch with the change from $1$ to $i$, see \Cref{remark:replace}.
\end{remark}

Let $\zeta$ by some ordering of the $k$ term multiplications by $2x_1$, $k-1$ switches, and $1$ derivative $\partial_1$. The value of $r(s)_{2k}$ for each of the $N^{k-1} +O(N^{k-2})$ sequences that are equivalent to $\zeta$ is the same; let this common value be $v(\zeta)\theta^{k-1}$, where the factor of $\theta^{k-1}$ is from the $k-1$ switches. Then, the coefficient of $N^{k-1}\theta^{k-1}$ is $\sum_{\zeta} v(\zeta)$.

Since each of the switches is equivalent to $d_1$, we refer to them as $d_1$ for simplicity. First, observe that at each location of $\zeta$, the number of previous term multiplications must be at least the number of previous $d_1$s and derivatives, otherwise the output will be zero. Hence, if we do not distinguish between the $d_1$s and derivative, each $\zeta$ corresponds to a Dyck path of length $2k$.

Recall that a Dyck path of length $2k$ is a path from $(0,0)$ to $(k,k)$ which does not cross above the line $y=x$. Each Dyck path corresponds to $k$ orderings $\zeta$, since we can view each $(1,0)$ step as multiplying by $2x_1$ and each $(0,1)$ step as $d_1$ or $\partial_1$, and there are $k$ ways to choose which $(0,1)$ step is $\partial_1$. When we apply $d_1$ or $\partial_1$, we decrease the degree of $x_1$ by $1$. The difference is that when applying $\partial_1$, we also multiply by the degree of $x_1$. 

Suppose $p$ is a Dyck path from $(0,0)$ to $(k,k)$. Observe that the sum of $v(\zeta)$ for $\zeta$ corresponding to $p$ is $k2^{k-1}$ plus $2^k$ times the area between $p$ and $y=x$, since we multiply by $2x_1$ $k$ times and for each of the $k$ choices of the derivative, we multiply by the $x$-distance between the current location on $p$ and $y=x$. 

Let $\Delta(p)$ denote the area between $p$ and $y=x$. Then, $\sum_\zeta v(\zeta) = k2^{k-1} + 2^k\sum_p \Delta(p)$. However, from \cite{catalan},
\[
\sum_p \Delta(p) = \frac{1}{2}\sum_{l=1}^k \frac{1}{l+1}\binom{2l}{l}4^{k-l}.
\]
Hence,
\[
\sum_\zeta v(\zeta) = k2^{k-1} + 2^{k-1}\sum_{l=1}^k \frac{1}{l+1}\binom{2l}{l}4^{k-l},
\]
which is also the coefficient of $N^{k-1}\theta^{k-1}$. 

Note that the coefficient of $N^{k-1}$ of $(c_F^{(2)})^k$ is $0$ when $\theta=1$, see \Cref{lemma:coeffex}. This implies that the coefficient of $N^{k-1}\theta^k$ is the coefficient of $N^{k-1}\theta^{k-1}$ times negative one. 
\end{proof}

As mentioned earlier, the leading order term of the $2k$th moment of the $\beta$-Hermite ensemble $d_{N,\beta}$ is known to be $\frac{N^k}{k+1}\binom{2k}{k}$, see \Cref{scaledlln} or \cite{eigenvalstat}*{Theorem 6.2.3}. As an example of an application of this paper's results, we approximate the lower order terms. Particularly, using \Cref{lemma:coeff}, we can estimate the moments of the $\beta$-Hermite ensemble for all $\beta>0$ with error $O(N^{-2})$, where $\beta=2\theta$. Note that for the cases $\beta\in\{1,2,4\}$, we can arrive at these estimates using recursion, see \cite{eulerchar} for the GUE case and \cite{momentsrecursion} for the GOE and GSE cases. The following result is true for general $\beta$.

\begin{theorem}
\label{thm:momentestimate}
Suppose $\theta>0$ and $(a_1, \ldots,a_N)\sim d_{N,2\theta}$. Then,
\[
\mathbb{E}\left[\frac{1}{N}\sum_{i=1}^N (a_i)^{2k}\right] = \frac{\binom{2k}{k}N^k}{k+1} +\frac{1}{2}\left(k+\sum_{l=1}^k \frac{1}{l+1}\binom{2l}{l}4^{k-l}\right)(\theta^{-1}-1)N^{k-1} + O_\theta(N^{k-2}).
\]
\end{theorem}

\begin{proof}
From \pref{eq:bgf}, $c_F^{(2)}=\frac{1}{2\theta}$. Then, from \Cref{prop:expectedvalue} and \Cref{lemma:coeff}, the $2k$th moment is 
\[
\frac{1}{(2\theta)^k}\left(\frac{2^k\theta^k\binom{2k}{k}N^k}{k+1} +2^{k-1}\left(k+\sum_{l=1}^k \frac{1}{l+1}\binom{2l}{l}4^{k-l}\right)(\theta^{k-1}-\theta^k)N^{k-1} + O_\theta(N^{k-2})\right),
\]
which finishes the proof.
\end{proof}

\subsection{Generalization to the \texorpdfstring{$|\theta_N N|\rightarrow\infty$}{} regime}

We note that if $\lim_{N\rightarrow\infty}\theta_NN^c=\theta$ for $\theta\in\mathbb{C}$ nonzero and $c<1$, then $|\theta_NN|\rightarrow\infty$. Based on this observation, we consider the regime $|\theta_N N|\rightarrow\infty$. This regime is considered for the $\beta$-Hermite ensemble in \cite{betagaussian} and for the $\beta$-Laguerre ensemble in \cite{betalaguerre}. Since the Bessel generating function for the $\beta$-Hermite ensemble is known, we can apply the results of this section in this case, similarly to as we do in \Cref{sec:eigenvalue}. The setting that we consider is analogous to the setting that we describe in \Cref{subsec:setup}.

For a sequence $\{\mu_N\}_{N\geq 1}$ of probability measures such that $\mu_N\in\mathcal{M}_N$ for $N\geq 1$, we let the random variable $p_k^{N,\theta_N}$ be
\[
p_k^{N,\theta_N} \triangleq \frac{1}{N}\sum_{i=1}^N \left(\frac{a_i}{N\theta_N}\right)^k,
\]
where $(a_1,\ldots,a_N)\sim\mu_N$ for $N,k\geq 1$. Similarly, the moments $\{m_k\}_{k\geq 1}$ of $\{\mu_N\}_{N\geq 1}$ are given by $m_k=\lim_{N\rightarrow\infty}\mathbb{E}[p_k^{N,\theta_N}]$ for $k\geq 1$.

\begin{definition}
\label{def:llngeneral}
Suppose $\{\theta_N\}_{N\geq 1}$ is a sequence of nonzero complex numbers. A sequence $\{\mu_N\}_{N\geq 1}$ of probability measures such that $\mu_N\in\mathcal{M}_N$ for $N\geq 1$ \textit{satisfies a Law of Large Numbers} with scaling sequence $\{\theta_N\}_{N\geq 1}$ and moments $\{m_k\}_{k\geq 1}$ if
\[
\lim_{N\rightarrow\infty} \mathbb{E}_{(a_1,\ldots,a_N)\sim\mu_N}\left(\prod_{i=1}^s p_{k_i}^{N,\theta_N}\right)  = \prod_{i=1}^s m_{k_i}
\]
for all positive integers $s$ and $k_i$, $1\leq i\leq s$.
\end{definition}

The following result is an analogue of \Cref{thm:equivalence}, and implies \Cref{thm:equivalence} when $\theta\not=0$.

\begin{theorem}
\label{thm:equivalence2}
Let $\{\theta_N\}_{N\geq 1}$ be a sequence of nonzero complex numbers with nonnegative real part such that $\lim_{N\rightarrow\infty}|N\theta_N|=\infty$. Let $\{\mu_N\}_{N\geq 1}$ be a sequence of probability measures such that for all $N\geq 1$, $\mu_N$ is in $\mathcal{M}_N$ and is exponentially decaying. For $N\geq 1$ and $\nu\in P^+$, define
\[
c_\nu^N\triangleq\frac{P(\nu)}{|\nu|!\theta_N N}\cdot\frac{\partial}{\partial x_{i_1}}\cdots\frac{\partial}{\partial x_{i_r}} \ln(G_{\theta_N}(x_1,\ldots,x_N;\mu_N))\bigg|_{x_i=0,1\leq i\leq N}
\]
for any positive integers $i_1, \ldots, i_r \leq N$ such that $\sigma((i_1,\ldots,i_r))=\nu$. By symmetry, any choice of $i_1,\ldots,i_r$ results in the same derivative.

Assume that for all $\nu\in P^+$, $|c_\nu^N|= N^{o_N(1)}$ and $|c_\nu^N|=|\theta_N N|^{o_N(1)}$. Then, $\{\mu_N\}_{N\geq 1}$ satisfies a LLN with scaling sequence $\{\theta_N\}_{N\geq 1}$ and free cumulants $\{c_k\}_{k\geq 1}$ if and only if 
\[
\lim_{N\rightarrow\infty}\sum_{\nu\in P, |\nu|=k}(-1)^{\ell(\nu)-1} \frac{|\nu|P(\nu)}{\ell(\nu)} c_\nu^N = c_k
\]
for all $k\geq 1$. Recall that if the free cumulants are $\{c_k\}_{k\geq 1}$, then the moments are $m_k=\sum_{\pi\in NC(k)} \prod_{B\in \pi}c_{|B|}$ for $k\geq 1$.
\end{theorem}

\begin{proof}
We follow the proof of \Cref{thm:equivalence} in \Cref{subsec:proofequiv}; in particular, we use the notation from the proof. Instead of \pref{eq:sumpoly}, we get that
\begin{align*}
&\sum_{\mathfrak{r}\in I_N(\lambda)} [1]\mathcal{D}_{\mathfrak{r}}^{\theta_N}(\tilde{F}_N(x_1,\ldots,x_N)) \\
& = \sum_{p\in\mathfrak{P}_k^N}s_{p,\lambda}(\theta_N)N^{k-\ell(p) + \ell(\lambda)}p +  O(|\theta_NN|^k N^{\ell(\lambda)-1+o_N(1)}) + O(|\theta_NN|^{k-1+o_N(1)}N^{\ell(\lambda)}).
\end{align*}
Define the complex number $c_{p,\lambda}\triangleq\frac{s_{p,\lambda}(\theta)}{\theta^{k-\ell(p)}}$ for $p\in\mathfrak{P}_k^N$ or equivalently, $c_{p,\lambda}\triangleq s_{p,\lambda}(1)$; recall that $s_{p,\lambda}(\theta)$ is a complex number multiplied by $\theta^{k-\ell(p)}$, see \Cref{lemma:remainder}. Then,
\begin{align*}
&\sum_{\mathfrak{r}\in I_N(\lambda)} [1]\mathcal{D}_{\mathfrak{r}}^{\theta_N}(\tilde{F}_N(x_1,\ldots,x_N)) \\
& = \sum_{p\in\mathfrak{P}_k^N}c_{p,\lambda}(\theta_NN)^{k-\ell(p)}N^{\ell(\lambda)}p +  O(|\theta_NN|^k N^{\ell(\lambda)-1+o_N(1)}) + O(|\theta_NN|^{k-1+o_N(1)}N^{\ell(\lambda)}).
\end{align*}
Hence,
\begin{align*}
&\frac{1}{N^{\ell(\lambda)}(\theta_N N)^k}\sum_{\mathfrak{r}\in I_N(\lambda)} [1]\mathcal{D}_{\mathfrak{r}}^{\theta_N}(\tilde{F}_N(x_1,\ldots,x_N)) \\ &  = \sum_{p\in\mathfrak{P}_k^N} c_{p,\lambda}\frac{p}{(\theta_N N)^{\ell(p)}} +  O(N^{-1+o_N(1)}) + O(|\theta_NN|^{-1+o_N(1)}).
\end{align*}

After observing that $c_{\tilde{F}_N}^\nu = \theta_N N c_N^\nu$ and using \Cref{cor:leadingorderterm} with $x=1$, we get that
\begin{align*}
&\frac{1}{N^{\ell(\lambda)}(\theta_N N)^k}\sum_{\mathfrak{r}\in I_N(\lambda)} [1]\mathcal{D}_{\mathfrak{r}}^{\theta_N}(\tilde{F}_N(x_1,\ldots,x_N)) \\ &  = \prod_{i=1}^{\ell(\lambda)}\left(\sum_{\pi\in NC(\lambda_i)} \prod_{B\in\pi}\left(\sum_{\nu\in P, |\nu|= |B|}(-1)^{\ell(\nu)-1}\frac{|\nu|P(\nu)}{\ell(\nu)}c_N^\nu\right)\right) + o_N(1).
\end{align*}
Afterwards, we can repeat the proof of \Cref{thm:equivalence}.
\end{proof}

\bibliography{mybib.bib}

\end{document}